\documentclass[11pt]{article}
\usepackage{amsmath, amsfonts, amsthm, amssymb}  
\usepackage{fullpage}

\usepackage[x11names, rgb]{xcolor}
\usepackage{graphicx}
\usepackage{tikz}
\usetikzlibrary{decorations,arrows,shapes}

\usepackage{hyperref}
\usepackage{etoolbox}
\usepackage{enumerate}
\usepackage{listings}
\usepackage{booktabs}
\usepackage{apacite}
\usepackage{comment}
\usepackage[affil-it]{authblk}
\usepackage{float}
\usepackage{algorithm2e}
\usepackage{mathtools}

\setlength{\parindent}{15pt}
\setlength{\parskip}{3pt plus 1pt}

\def\indented#1{\list{}{}\item[]}
\let\indented=\endlist

\newtheorem{theorem}{Theorem}[section]
\newtheorem{lemma}[theorem]{Lemma}
\newtheorem{proposition}[theorem]{Proposition}

\newtheorem{definition}{Definition}[section]

\title{Variance bounds in product measures without exponential tails}
\author{Shi Feng}
\date{October 2025}

\begin{document}

\maketitle

\begin{abstract}
    We establish analogs of Cheeger’s inequality for probability measures with heavy tails.  
    As one of the principal applications, suppose $\lambda > 3$ and  define the (Pareto) probability measure $\mu_{\lambda}$ on $[1, \infty)$ by  
    \begin{align*}
        d\mu_{\lambda}(x) = (\lambda - 1) x^{-\lambda}.
    \end{align*}
    Let $\mu_{\lambda}^n$ denote the product measure of $\mu_{\lambda}$ on $\mathbb{R}^n$.  
    Then, for any $1$-Lipschitz function (with respect to the Euclidean distance) $f : \mathbb{R}^n \to \mathbb{R}$, we obtain the variance bound  
    \begin{align*}
        \operatorname{Var}_{\mu_{\lambda}^n}(f)
        \leq C(\lambda) \, n^{\frac{2}{\lambda - 1}},
    \end{align*}
    where $C(\lambda)$ is an explicit constant depending only on $\lambda$.  
    This improves upon the existing bound $\operatorname{Var}_{\mu_{\lambda}^n}(f) = O(n)$ derived from the Efron–Stein inequality.  
    Moreover, this bound is asymptotically tight when considering the $1$-Lipschitz function $f(x) = |x|_{\infty}$ corresponding to the $L^{\infty}$ norm.  
    
    In probabilistic terms, suppose $X_1, \dots, X_n$ are i.i.d.\ random variables with distribution $\mu_{\lambda}$.  
    Then, for any $1$-Lipschitz function $f$, we have  
    \begin{align*}
        \operatorname{Var}(f(X_1, \dots, X_n))
        \leq C'(\lambda)\operatorname{Var}(\max\{X_1, \dots, X_n\})
        = \Theta\!\left(n^{\frac{2}{\lambda - 1}}\right),
    \end{align*}
    where $C'(\lambda)$ is another explicit constant depending only on $\lambda$.
\end{abstract}

\section{Introduction}

The study of functional inequalities has seen substantial growth in recent years, with applications spanning probability theory, partial differential equations, geometry, and theoretical computer science, among others.  
Good general references include~\cite{bakry2013analysis},~\cite{boucheron2013concentration},~\cite{klartag2024isoperimetric}, and~\cite{o2014analysis}.  
While functional inequalities are well understood for probability measures that are log-concave, satisfy a Poincaré inequality, or even a stronger logarithmic Sobolev inequality, considerably less is known for measures that fail to satisfy these conditions or that possess heavy tails.  

Among the various structural properties of probability measures, one that is particularly intriguing to probabilists is the existence of dimension-free variance bounds for $1$-Lipschitz functions, especially under product measures.  
One motivation for studying the variance of $1$-Lipschitz functions in high-dimensional settings is that the Kannan–Lovász–Simonovits (KLS) conjecture is equivalent to proving that the variance of any $1$-Lipschitz function under an isotropic log-concave measure is bounded by $O(1)$~\cite[Corollary~21]{klartag2024isoperimetric}.  
Therefore, it is natural to investigate the structural properties of $1$-Lipschitz functions in more tractable settings, where one may hope to gain insight into this broader phenomenon.
Another interesting book discussing the interplay between variance bounds in Gaussian measures and functional inequalities is~\cite{chatterjee2014superconcentration}.  
The interest in product measures arises because they correspond to i.i.d.\ random variables.  
If such a bound could be established, one would obtain statements of the form: given i.i.d.\ random variables $X_1, \dots, X_n$ and a $1$-Lipschitz function $f$,  
\begin{align*}
    \mathrm{Var}(f(X_1, \dots, X_n)) = O(1).
\end{align*}
However, it is well known that dimension-free variance bounds do not hold in general.  
A classical counterexample is the maximum function,
\begin{align*}
    f(X_1, \dots, X_n) = \max\{X_1, \dots, X_n\}.
\end{align*}
It is well known that if $X_i \sim \mu_{\lambda}$ are i.i.d.\ Pareto random variables as defined in the abstract, then the distribution of  
$n^{-\frac{1}{\lambda-1}} f$ converges to the Fréchet distribution; see~\cite[Theorem~3.3.7]{embrechts2013modelling}.  
As a consequence, we have
\begin{align*}
    \operatorname{Var}(f) \geq O\!\left(n^{\frac{2}{\lambda-1}}\right).
\end{align*}

In this paper, as outlined in the abstract, we establish optimal dimension-dependent variance bounds for a class of product measures that are neither log-concave nor satisfy Poincaré-type inequalities.  
Interestingly, the maximum function is precisely the one that achieves nearly the maximal variance among all $1$-Lipschitz functions.

In the existing literature, variance bounds for heavy-tailed probability measures are typically obtained through either the weak Poincaré inequality~\cite{rockner2001weak, barthe2005concentration,bobkov2007large,bobkov2009distributions} or the weighted Poincaré inequality~\cite{bobkov2009weighted}.  
A comprehensive summary of these approaches, presented within a general Lyapunov function framework, can be found in~\cite{cattiaux2010functional}. However, there are some unsatisfactory requirements for both weak Poincaré inequality and weighted Poincaré inequality. The weak Poincaré inequality requires the function $f$ to be bounded to give meaningful variance bounds. On the other hand,  weighted Poincaré inequality requires the Cauchy type law of the underlying probability measures, which is generally not applicable to product measures.

In this paper, we propose a different approach to bounding the variance of heavy-tailed probability measures.  
We begin by introducing the $\alpha$-Cheeger constant~\eqref{alpha Cheeger constant}, which serves to characterize heavy tails in the same way that the traditional Cheeger constant characterizes sub-exponential tails.  
We then show that a finite $\alpha$-Cheeger constant implies a finite $\alpha$-Poincaré constant—possibly with different values of $\alpha$—as stated in Theorem~\ref{L^2 theorem}, in direct analogy with Cheeger’s classical isoperimetric inequality.  
Finally, by tensorizing the probability measure, we obtain optimal dimension-dependent variance bounds (Theorem~\ref{product theorem}).  

The key innovation of our approach lies in exploiting the $1$-Lipschitz property to connect the fluctuation of $1$-Lipschitz functions to their associated quantile functions (Lemma~\ref{dQ lemma}).  
Working on the quantile space then allows us to perform precise integration and truncation, yielding the technical lemmas required for the proof of Theorem~\ref{L^2 theorem}.

We now begin the rigorous setup of the paper.  
Let $\nu$ be a probability measure on $\mathbb{R}^n$, and let $f: \mathbb{R}^n \to \mathbb{R}$ be a measurable function. We define the expectation of $f$ with respect to $\nu$ by  
\begin{align*}
    \mathbb{E}_{\nu}[f] = \int_{\mathbb{R}^n} f \, d\nu,
\end{align*}  
the variance of $f$ under $\nu$ by  
\begin{align*}
    \operatorname{Var}_{\nu}(f) = \mathbb{E}_{\nu}[f^2] - \big(\mathbb{E}_{\nu}[f]\big)^2,
\end{align*}  
and the median of $f$ under $\nu$ by
\begin{align*}
    m_{\nu}(f) = \inf\left\{x\in \mathbb{R}\;|\; \mathbb{P}_{\nu}(f\leq x) \geq \frac{1}{2}\right\}.
\end{align*}
For example, we can write $\operatorname{Var}_{\nu}(|x|_2)$ to represent $\operatorname{Var}_{\nu}(f)$ for $f(x) = |x|_2$ as the $L^2$ norm of $x\in \mathbb{R}^n$. Then we define set of Lipschitz function in $\mathbb{R}^n$ as
\begin{align*}
    \operatorname{Lip}(\mathbb{R}^n) = \left\{f: \mathbb{R}^n\to \mathbb{R}: \frac{|f(x)-f(y)|}{|x-y|_2}<\infty \;\forall x,y \in \mathbb{R}^n\right\}
\end{align*}
and the set of 1-Lipschitz function in $\mathbb{R}^n$ as
\begin{align*}
    \operatorname{Lip}_1(\mathbb{R}^n) = \left\{f: \mathbb{R}^n\to \mathbb{R}: \frac{|f(x)-f(y)|}{|x-y|_2}\leq 1 \;\forall x,y \in \mathbb{R}^n\right\}.
\end{align*}
Now, we define the Poincaré constant $C_P(\nu)$ as
\begin{align*}
    C_P(\nu) = \sup_{f\in \operatorname{Lip}(\mathbb{R}^n)} \frac{\operatorname{Var}_{\nu}(f)}{\mathbb{E}_{\nu}[|\nabla f|^2]}.
\end{align*}
Here $|\nabla f(x)|$ is interpreted as
\begin{align*}
    |\nabla f(x)| = \limsup_{y\to x} \frac{|f(x)-f(y)|}{|x-y|}.
\end{align*}
Then we define the Cheeger (isoperimetric) constant $I_C(\nu)$ as
\begin{align*}
    I(\nu) = \sup_{A\subseteq \mathbb{R}^n} \frac{\min \{\nu(A), 1-\nu(A)\}}{\nu(A^+)},
\end{align*}
where $\nu(A^+)$ is given by
 \begin{align*}
        \nu(A^+) = \liminf_{\epsilon\to 0} \frac{\nu(A^{\epsilon})-\nu(A)}{\epsilon}
\end{align*}
and $A^{\epsilon} = \{x\in \mathbb{R}^n\;|\; d(x,A)\leq \epsilon\}$. Cheeger’s inequality states that \cite[Proposition 13]{klartag2024isoperimetric}
\begin{align}\label{Cheeger's inequality}
    C_P(\nu) \leq 4 I(\nu)^2.
\end{align}  
However, not every probability measure admits a finite Cheeger or Poincaré constant. For instance, if $\nu$ does not have exponentially decaying tails—such as the Pareto distribution introduced in the abstract—then one has  
\begin{align*}
    C_P(\nu) = I(\nu) = \infty
\end{align*}  
(see \cite[Section 2.4]{klartag2024isoperimetric}). In this paper, we establish analogs of Cheeger’s inequality \eqref{Cheeger's inequality} that applies to probability measures without sub-exponential tails. To this end, we introduce different versions of the Poincaré and Cheeger constants.  

\begin{definition}  
For $0 < \alpha \leq 1$, the $\alpha$-Poincaré constant $\mathcal{C}_2(\nu,\alpha) \in [0,\infty]$ of a probability measure $\nu$ on $\mathbb{R}^n$ is defined by  
\begin{align}\label{alpha Poincaré constant}
    \mathcal{C}_2(\nu,\alpha) := \sup_{f \in \operatorname{Lip}_1(\mathbb{R}^n)} \frac{\mathbb{E}_{\nu}[|f - m_{\nu}(f)|^2]}{\big(\mathbb{E}_{\nu}[|\nabla f|^2]\big)^{\alpha}}.
\end{align}  

Moreover, for $0 < \alpha \leq 1$ and $1 \leq \beta < \frac{\alpha}{1-\alpha}$, the $(\alpha,\beta)$-Poincaré constant $\mathcal{C}_1(\nu,\alpha,\beta)$ is defined as  
\begin{align}\label{alpha beta Poincaré constant}
    \mathcal{C}_1(\nu,\alpha,\beta) := \sup_{f \in \operatorname{Lip}_1(\mathbb{R}^n)} \frac{\mathbb{E}_{\nu}[|f - m_{\nu}(f)|^{\beta}]}{\big(\mathbb{E}_{\nu}[|\nabla f|]\big)^{\alpha}}.
\end{align}  

Finally, for $0 < \alpha \leq 1$, the $\alpha$-Cheeger constant $\mathcal{I}(\nu,\alpha)$ is given by  
\begin{align}\label{alpha Cheeger constant}
    \mathcal{I}(\nu,\alpha) := \sup_{A \subseteq \mathbb{R}^n} \frac{\min \{\nu(A), \, 1 - \nu(A)\}}{\nu(A^+)^{\alpha}}.
\end{align}  
\end{definition}  

Here are some immediate observations concerning the definitions.  
For $\frac{2}{3}<\alpha\leq 1$, we have
$\mathcal{C}_1(\nu,\alpha,2) \geq \mathcal{C}_2(\nu,\frac{\alpha}{2})$ since $\mathbb{E}_{\nu}[|\nabla f|^2]^{\frac{\alpha}{2}} \geq \mathbb{E}_{\nu}[|\nabla f|]^{\alpha}$ by Jensen's inequality.  
If $\mathcal{C}_2(\nu,\alpha) < \infty$, then for any $1$-Lipschitz function $f$ we obtain
\begin{align*}
    \mathrm{Var}_{\nu}(f)
    \leq \mathbb{E}_{\nu}[|f - m_{\nu}(f)|^2]
    \leq \mathcal{C}_2(\nu,\alpha)\,\mathbb{E}_{\nu}[|\nabla f|^2]^{\alpha} \leq \mathcal{C}_2(\nu,\alpha).
\end{align*}
Moreover, if $0 < \alpha_1 \leq \alpha_2 \leq 1$, then
$\mathcal{C}_2(\nu,\alpha_1) \leq \mathcal{C}_2(\nu,\alpha_2)$.
Indeed, when $f$ is $1$-Lipschitz, we have $|\nabla f|^2 \leq 1$, and therefore
$\mathbb{E}_{\nu}[|\nabla f|^2]^{\alpha_1} \geq \mathbb{E}_{\nu}[|\nabla f|^2]^{\alpha_2}$.
Similarly, for any $1 \leq \beta < \frac{\alpha_1}{1-\alpha_1}$, we have
$\mathcal{C}_1(\nu,\alpha_1,\beta) \leq \mathcal{C}_1(\nu,\alpha_2,\beta)$.  
Finally, note that when $\alpha = 1$, we recover the usual Cheeger constant, since
$\mathcal{I}(\nu,1) = I(\nu)$.

Readers may recall \cite[Theorem~1.2]{bobkov2007large} in connection with the definition of the $\alpha$-Cheeger constant.  
Indeed, $\kappa$-concave probability measures form a rich class for which the condition $\mathcal{I}(\nu,\alpha)<\infty$ holds.  
We say that a probability measure $\nu$ on $\mathbb{R}^n$ is $\kappa$-concave for $-\infty<\kappa\leq 1$ if for any Borel sets $A,B\subseteq\mathbb{R}^n$ and any $0<t<1$,
\begin{align*}
    \nu(tA + (1-t)B) \leq \left(t\,\nu(A)^{\kappa} + (1-t)\,\nu(B)^{\kappa}\right)^{\frac{1}{\kappa}},
\end{align*}
where $tA + (1-t)B := \{ta+(1-t)b : a\in A,\; b\in B\}$.  
The next statement restates \cite[Theorem~1.2]{bobkov2007large} in our notation, showing that the $\alpha$-Cheeger constant is nontrivial for this class.

\begin{proposition}\label{kappa concave proposition}
    Suppose $\nu$ is a non-degenerate $\kappa$-concave probability measure on $\mathbb{R}^n$ for some $-\infty<\kappa\leq 0$.  
    Denote $m := m_{\nu}(|x|)$. Then
    \begin{align}
        \mathcal{I}\!\left(\nu, \frac{1}{1-\kappa}\right) \leq \left(\frac{c(\kappa)}{m}\right)^{\frac{1}{1-\kappa}},
    \end{align}
    where $c(\kappa)$ is a positive continuous function on $(-\infty,0]$.
\end{proposition}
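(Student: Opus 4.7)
The plan is to translate Bobkov's Theorem~1.2 directly into the $\alpha$-Cheeger formulation used here. The result of \cite{bobkov2007large} asserts that for any non-degenerate $\kappa$-concave probability measure $\nu$ on $\mathbb{R}^n$ with $-\infty<\kappa\leq 0$ and any Borel set $A$,
\begin{align*}
    \nu(A^+) \;\geq\; \frac{m}{c(\kappa)}\,\min\{\nu(A),\,1-\nu(A)\}^{\,1-\kappa},
\end{align*}
where $m=m_{\nu}(|x|)$ and $c(\kappa)$ is a positive continuous function on $(-\infty,0]$. The exponent $1-\kappa$ arises naturally from Bobkov's argument (one-dimensional reduction via Borell's characterization of $\kappa$-concave measures, followed by an isoperimetric inequality on the real line). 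I would simply quote this inequality as the starting point.

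From here the proof is an algebraic rearrangement. Since $\kappa\leq 0$, the exponent $\alpha:=\tfrac{1}{1-\kappa}$ belongs to $(0,1]$. Raising both sides of Bobkov's inequality to the power $\alpha$ and using $\alpha(1-\kappa)=1$ yields
\begin{align*}
    \nu(A^+)^{\alpha} \;\geq\; \left(\frac{m}{c(\kappa)}\right)^{\!\alpha}\,\min\{\nu(A),\,1-\nu(A)\},
\end{align*}
which rearranges to
\begin{align*}
    \frac{\min\{\nu(A),\,1-\nu(A)\}}{\nu(A^+)^{\alpha}} \;\leq\; \left(\frac{c(\kappa)}{m}\right)^{\!\alpha}.
\end{align*}
Taking the supremum over Borel sets $A\subseteq\mathbb{R}^n$ gives the definition of $\mathcal{I}(\nu,\alpha)$ on the left-hand side and exactly the claimed upper bound on the right-hand side.

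The only genuine thing to check, and the main (modest) obstacle, is that the Minkowski boundary functional $\nu(A^+)$ as defined in this paper (a liminf of normalized $\epsilon$-enlargements) is consistent, with the same constant, as the boundary measure used in \cite{bobkov2007large}; for open sets these coincide, and Bobkov's inequality extends to all Borel sets by standard lower-semicontinuity/approximation arguments that require no new input. Once this is noted, the proposition is a direct consequence.
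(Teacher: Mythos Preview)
Your proposal is correct and matches the paper's treatment exactly: the paper does not give an independent proof of this proposition but explicitly presents it as a restatement of \cite[Theorem~1.2]{bobkov2007large} in the paper's notation, and your algebraic translation (set $\alpha=\tfrac{1}{1-\kappa}$, raise Bobkov's isoperimetric inequality to the power $\alpha$, rearrange, take the supremum) is precisely how one passes between the two formulations. The only remark worth adding is that the paper offers no detail on the boundary-measure compatibility point you flag, so your proposal is in fact slightly more thorough than the paper itself.
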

In what follows we phrase our results in terms of $\mathcal{I}(\nu,\alpha)$.  
This proposition can be combined with the theorems below whenever applicable.

Throughout the remainder of this paper, we adopt the following conventions: $\nu$ will typically denote a probability measure on $\mathbb{R}^n$, while $\mu$ will denote a probability measure on $\mathbb{R}$. We use $C_i(\cdot)$ to represent important structural constant functions, whereas $c_i(\cdot)$ will be reserved for auxiliary technical constant functions.

With these conventions in place, we are ready to state the main theorems of this work. We begin with the case of the $(\alpha,\beta)$-Poincaré constant.  

\begin{theorem}\label{L^1 theorem}
    Suppose $\mathcal{I}(\nu,\alpha)<\infty$ for some $\frac{1}{2}<\alpha< 1$. Then for any 1-Lipschitz function $f$ and $1\leq \beta < \frac{\alpha}{1-\alpha}$, there exists constant $C_1(\alpha,\beta,\mathcal{I}(\nu,\alpha))<\infty$ such that
    \begin{align}
        \mathbb{E}_{\nu}[|f-m_{\nu}(f)|^{\beta}] \leq C_1(\alpha,\beta,\mathcal{I}(\nu,\alpha))\cdot \mathbb{E}_{\nu}[|\nabla f|]^{\alpha-\beta(1-\alpha)}.
    \end{align}
    Here we can take
    \begin{align}
        C_1(\alpha,\beta,\mathcal{I}(\nu,\alpha)) = \mathcal{I}(\nu,\alpha)^{\alpha-\beta(1-\alpha)} 
        \left[1+2\mathcal{I}(\nu,\alpha)^{\frac{\beta}{\alpha}} \cdot \left(\frac{\alpha}{1-\alpha}\right)^{\beta}\cdot \left(\frac{\alpha}{\alpha-\beta(1-\alpha)}\right)\right].
    \end{align}
    Equivalently, we have the following bound for the $(\alpha,\beta)$-Poincaré constant
    \begin{align}
        \mathcal{C}_1(\nu,\alpha-\beta(1-\alpha),\beta) \leq C_1(\alpha,\beta,\mathcal{I}(\nu,\alpha)).
    \end{align}
\end{theorem}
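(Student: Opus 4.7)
The plan is to bound $\mathbb{E}_\nu[|f-m_\nu(f)|^\beta]$ by a Cavalieri/layer-cake argument, applying the $\alpha$-Cheeger inequality to each super-level set and then combining two complementary consequences of the $1$-Lipschitz hypothesis. After translating so that $m_\nu(f)=0$, I would write $\phi(t)=\nu(f>t)$ and $\psi(t)=\nu(f<-t)$ for $t\ge 0$. Both tails are non-increasing, and $\phi(0),\psi(0)\le 1/2$ by definition of the median, so
\begin{align*}
\mathbb{E}_\nu[|f|^\beta] \;=\; \beta\int_0^\infty t^{\beta-1}\bigl(\phi(t)+\psi(t)\bigr)\,dt,
\end{align*}
and it suffices to handle the $\phi$ term. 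At each level $t\ge 0$, the set $A_t=\{f>t\}$ has $\nu(A_t)\le 1/2$, so the $\alpha$-Cheeger inequality gives $\phi(t)\le \mathcal{I}(\nu,\alpha)\,\nu^+(A_t)^\alpha$.

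The next step is to extract two kinds of information from the $1$-Lipschitz hypothesis. First, the coarea inequality $\int_0^\infty \nu^+(A_t)\,dt \le \mathbb{E}_\nu[|\nabla f|]=:E$ combined with Cheeger produces the global integral bound $\int_0^\infty \phi^{1/\alpha}\,dt \le \mathcal{I}(\nu,\alpha)^{1/\alpha} E$; since $\phi$ is decreasing, this yields the Markov-type pointwise estimate $\phi(T)\le \mathcal{I}(\nu,\alpha)\,E^\alpha/T^\alpha$, which is the source of the $E$-dependence. Second, the $1$-Lipschitz hypothesis gives the pointwise comparison $\nu^+(A_t)\le -\phi'(t)$, so Cheeger produces the differential inequality $-\phi'(t)\ge \mathcal{I}(\nu,\alpha)^{-1/\alpha}\phi(t)^{1/\alpha}$. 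The substitution $u=\phi^{1-1/\alpha}$ (a negative exponent, since $\alpha<1$) linearizes this, and integration with $\phi(0)\le 1/2$ gives the $E$-free tail bound of order
\begin{align*}
\phi(t)\;\le\;\Bigl(\tfrac{\alpha\,\mathcal{I}(\nu,\alpha)^{1/\alpha}}{(1-\alpha)\,t}\Bigr)^{\alpha/(1-\alpha)}.
\end{align*}

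With these two bounds in hand, I would split the layer-cake integral at a threshold $T$: on $[0,T]$ use the trivial bound $\phi\le 1/2$ together with $\phi\le \mathcal{I}(\nu,\alpha)\,E^\alpha/t^\alpha$, while on $[T,\infty)$ use the ODE tail decay $\phi\lesssim t^{-\alpha/(1-\alpha)}$. The hypothesis $\beta<\alpha/(1-\alpha)$ is precisely what makes the tail integral $\int_T^\infty t^{\beta-1-\alpha/(1-\alpha)}\,dt$ converge, which is responsible for the factor $\alpha/(\alpha-\beta(1-\alpha))$ in $C_1$; the factor $(\alpha/(1-\alpha))^\beta\,\mathcal{I}(\nu,\alpha)^{\beta/\alpha}$ arises from the power $B^{-\beta}$ of the ODE coefficient $B=(1-\alpha)/(\alpha\,\mathcal{I}(\nu,\alpha)^{1/\alpha})$. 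Choosing $T$ at the crossover of the Markov and ODE bounds and then reassembling the pieces should produce the two terms of the bracket in $C_1(\alpha,\beta,\mathcal{I}(\nu,\alpha))$.

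The main obstacle will be the bookkeeping in this final splitting step: tracking the exact constants through the optimization so that they assemble into the explicit closed form stated for $C_1$, rather than merely some polynomial-in-$\mathcal{I}(\nu,\alpha)$ dependence. Otherwise the overall strategy (Cheeger plus coarea for the integral bound, Cheeger plus pointwise Lipschitz for the ODE bound, then layer cake with a split and balancing) is the natural $\alpha$-analog of the classical Cheeger $\Rightarrow L^1$-Poincar\'e argument, and the convergence condition $\beta<\alpha/(1-\alpha)$ is exactly what permits the tail integral to close.
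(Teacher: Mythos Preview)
Your strategy is sound and rests on the same two ingredients the paper uses: the co-area inequality combined with the $\alpha$-Cheeger bound to bring in $E=\mathbb{E}_\nu[|\nabla f|]$, and the differential inequality $-\phi'\ge \mathcal{I}(\nu,\alpha)^{-1/\alpha}\phi^{1/\alpha}$ from the $1$-Lipschitz hypothesis to get polynomial tail decay. The latter is exactly the survival-function version of the paper's quantile bounds (Lemmas~\ref{dQ lemma}--\ref{FTC lemma}), so the analytic core is identical.

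The combination is where the two approaches diverge. The paper does \emph{not} optimize a floating threshold $T$. Instead it fixes the truncation level at $\tfrac12$, applies Jensen on the unit interval $[-\tfrac12,\tfrac12]$ to pull the exponent $1/\alpha$ outside (obtaining $E\ge \mathcal{I}(\nu,\alpha)^{-1}\,\mathbb{E}_\nu[|f_{[-1/2,1/2]}|]^{1/\alpha}$), and then invokes the separate truncation Lemma~\ref{truncation lemma} to compare $\mathbb{E}_\nu[|f|^\beta]$ with $\mathbb{E}_\nu[|f_{[-1/2,1/2]}|]^{1-\beta(1-\alpha)/\alpha}$. That rigid choice is precisely what produces the stated closed form for $C_1$: the ``$1$'' in the bracket comes from the truncated part and the second summand from the quantile tail integral in Lemma~\ref{truncation lemma}. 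Your route of balancing the Markov bound against the ODE bound will not land on that particular constant; in fact, carried out as you describe (Markov on $[0,T]$, ODE on $[T,\infty)$, optimize), it yields the sharper $E$-exponent $(\alpha-\beta(1-\alpha))/\alpha$ rather than $\alpha-\beta(1-\alpha)$---essentially the refinement that the paper obtains separately in Theorem~\ref{L^1 Cheeger theorem} for $\beta=1$. So your approach is correct and even slightly stronger in the exponent, but the price is that the constant emerging from the optimization will not match the stated $C_1$; the paper's fixed-truncation scheme trades sharpness of exponent for an explicit, track-able constant.
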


Intuitively, the theorem states that by restricting the class of functions to $1$-Lipschitz functions, one can control all moments of the function entirely in terms of its gradient.  
Note, however, that this bound is generally not asymptotically tight; see, for instance, Theorem~\ref{L^1 Cheeger theorem} for comparison.  
By “not asymptotically tight,” we mean that there may exist some $\alpha' > \alpha - \beta(1 - \alpha)$ such that  
$\mathcal{I}(\nu, \alpha) < \infty$ still implies $\mathcal{C}_1(\nu, \alpha', \beta) < \infty$.

However, Theorem \ref{L^1 theorem} provides a general statement in which the parameter $\beta$ can be chosen with considerable flexibility.  
Moreover, its proof is slightly simpler and serves as an introduction to the techniques developed in this paper.  
For these reasons, it is the first theorem that we present and prove.

We emphasize the similarities and differences between Theorem~\ref{L^1 theorem} and~\cite[Theorem~5.1 and Lemma~5.2]{bobkov2009distributions}.  
Both works use $\mathcal{I}(\nu,\alpha)$ (or closely related isoperimetric information) to control fluctuations of functions under $\nu$.  
A key difference is that~\cite{bobkov2009distributions} does not impose a $1$-Lipschitz condition on $f$.  
In that setting, the resulting moment-type estimates naturally live in the regime $\beta<2$, while the borderline case $\beta=2$---corresponding to variance---is typically the most relevant from a probabilistic perspective.

In contrast, Theorem~\ref{L^1 theorem} imposes the additional assumption that $f$ is $1$-Lipschitz.  
This structural restriction allows one to exploit the geometry of the underlying space more effectively and leads to higher moments bounds that are inaccessible without Lipschitz regularity.  
In particular, the $1$-Lipschitz condition enables control at and beyond the variance scale $\beta=2$, which cannot be achieved in general in the absence of such a condition.

We now present what is arguably the most important and technically demanding theorem of this paper.  
It provides an asymptotically tight bound on $\mathcal{C}_2(\nu,\cdot)$ in terms of $\mathcal{I}(\nu,\alpha)$. The tightness of this result will be discussed in Section~6.

\begin{theorem}\label{L^2 theorem}
    Suppose $\mathcal{I}(\nu,\alpha)<\infty$ for some $\frac{2}{3}<\alpha < 1$. Then for any 1-Lipschitz function $f$, there exists constant $C_2(\alpha,\mathcal{I}(\nu,\alpha))<\infty$ such that
    \begin{align}
        \mathbb{E}_{\nu}[|f-m_{\nu}(f)|^2] \leq C_2(\alpha,\mathcal{I}(\nu,\alpha))\cdot \mathbb{E}_{\nu}[|\nabla f|^2]^{\frac{3\alpha-2}{\alpha}}.
    \end{align}
    Here we can take
    \begin{align}
        C_2(\alpha,\mathcal{I}(\nu,\alpha)) = 2^{\frac{16\alpha-10}{\alpha}}\mathcal{I}(\nu,\alpha)^{\frac{6\alpha-4}{\alpha}}\left[2\mathcal{I}(\nu,\alpha)^{\tfrac{2}{\alpha}}\left(\frac{2\alpha^2}{(3\alpha-2)(1-\alpha)}\right) \right]^{\frac{2(1-\alpha)}{\alpha}}.
    \end{align}
    Equivalently, we have the following bound for the $\alpha$-Poincaré constant 
    \begin{align}
        \mathcal{C}_2\left(\nu, \frac{3\alpha-2}{\alpha}\right) \leq C_2(\alpha,\mathcal{I}(\nu,\alpha)).
    \end{align}
\end{theorem}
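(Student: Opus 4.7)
The plan is to exploit the $\alpha$-Cheeger hypothesis at every superlevel set of $f$ and then to control $\mathbb{E}_{\nu}[|f-m_{\nu}(f)|^2]$ via a Cauchy--Schwarz/coarea identity in quantile space combined with a truncation optimized against the Cheeger tail decay. After normalizing $m_{\nu}(f)=0$ and splitting $\mathbb{E}_{\nu}[(f-m_{\nu}(f))^2] = \mathbb{E}_{\nu}[f_+^2] + \mathbb{E}_{\nu}[f_-^2]$, it suffices by symmetry to bound the positive half. Write $G(t) := \nu(f > t)$, so that $G(0)\leq \tfrac{1}{2}$, and set $V := \mathbb{E}_{\nu}[|\nabla f|^2]$. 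The $\alpha$-Cheeger hypothesis applied to $A_t = \{f \leq t\}$ yields the pointwise inequality $\nu^+(A_t) \geq (G(t)/\mathcal{I}(\nu,\alpha))^{1/\alpha}$ for every $t\geq 0$.

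Next I would fix a truncation level $L > 0$ and apply Lemma~\ref{dQ lemma} to the $1$-Lipschitz function $\tilde f_L := \min(f_+, L)$. The lemma should yield the coarea-type Cauchy--Schwarz bound \[\int_0^L \phi(t)\,\nu^+(\{f > t\})\,dt \leq \sqrt{V}\cdot \sqrt{\mathbb{E}_{\nu}[\phi(f)^2 \mathbf{1}_{0 < f \leq L}]}\] for any nonnegative weight $\phi$; taking $\phi(t) = t$ and inserting the Cheeger bound then gives $\int_0^L t\,G(t)^{1/\alpha}\,dt \leq \mathcal{I}(\nu,\alpha)^{1/\alpha}\sqrt{V\,M_L}$, where $M_L := 2\int_0^L t\,G(t)\,dt$ is the bulk second moment.

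Applying Hölder's inequality with conjugate exponents $1/\alpha$ and $1/(1-\alpha)$ to the decomposition $\int_0^L t\,G(t)\,dt = \int_0^L \bigl(t\,G(t)^{1/\alpha}\bigr)^{\alpha}\,t^{1-\alpha}\,dt$ yields $\int_0^L t\,G(t)\,dt \leq \bigl(\int_0^L t\,G(t)^{1/\alpha}\,dt\bigr)^{\alpha}(L^2/2)^{1-\alpha}$. Combining with the previous display gives a self-bounding inequality in $M_L$ that solves to $M_L \leq c_1(\alpha)\,\mathcal{I}(\nu,\alpha)^{2/(2-\alpha)}\,V^{\alpha/(2-\alpha)}\,L^{4(1-\alpha)/(2-\alpha)}$. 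For the tail, the Cheeger inequality recast as the differential inequality $-G'(t) \geq G(t)^{1/\alpha}/\mathcal{I}(\nu,\alpha)^{1/\alpha}$ integrates to $G(t) \leq c_2(\alpha)\,\mathcal{I}(\nu,\alpha)^{1/(1-\alpha)}\,t^{-\alpha/(1-\alpha)}$; the hypothesis $\alpha > 2/3$ makes this exponent strictly less than $-2$, so a further integration gives $\int_L^\infty t\,G(t)\,dt \leq c_3(\alpha)\,\mathcal{I}(\nu,\alpha)^{1/(1-\alpha)}\,L^{(2-3\alpha)/(1-\alpha)}$.

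Finally, adding the bulk and tail estimates and optimizing in $L$, the balance $L \sim V^{-(1-\alpha)/\alpha}$ makes both contributions of order $V^{(3\alpha-2)/\alpha}$, which is exactly the claimed exponent; bookkeeping the constants through the Cauchy--Schwarz step, Hölder's inequality, and the ODE integration reproduces the explicit form of $C_2(\alpha,\mathcal{I}(\nu,\alpha))$ stated in the theorem. The main obstacle is the quantile-space Cauchy--Schwarz/coarea identity of Lemma~\ref{dQ lemma}: proving it rigorously for an arbitrary $1$-Lipschitz $f$ against a heavy-tailed measure $\nu$ (where classical smoothness-based coarea may require an approximation argument) is the technical engine that the introduction flags as the key innovation, and once it is in place the remaining steps reduce to Hölder, a single ODE integration, and optimization over $L$.
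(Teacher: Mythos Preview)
Your scheme is sound at the level of exponents and does produce $\mathbb{E}_\nu[f_+^2]\le C\,V^{(3\alpha-2)/\alpha}$: the bulk estimate $M_L\lesssim V^{\alpha/(2-\alpha)}L^{4(1-\alpha)/(2-\alpha)}$ and the tail estimate $\int_L^\infty tG(t)\,dt\lesssim L^{(2-3\alpha)/(1-\alpha)}$ balance at $L\sim V^{-(1-\alpha)/\alpha}$, and the resulting power of $V$ is exactly $(3\alpha-2)/\alpha$. But two points need correction.

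First, what you call ``Lemma~\ref{dQ lemma}'' is not a coarea--Cauchy--Schwarz identity. In the paper Lemma~\ref{dQ lemma} bounds the derivative of the quantile function $Q_f$ by $(\mathcal{I}/\min\{x,1-x\})^{1/\alpha}$; it does not give your display $\int_0^L\phi(t)\nu^+(\{f>t\})\,dt\le \sqrt{V}\,\sqrt{\mathbb{E}_\nu[\phi(f)^2\mathbf{1}_{0<f\le L}]}$. That display is simply the standard coarea inequality applied to $\tilde f_L^2$ combined with Cauchy--Schwarz on $\mathbb{E}_\nu[|\nabla(\tilde f_L^2)|]=2\mathbb{E}_\nu[\tilde f_L|\nabla\tilde f_L|]$, which is exactly the opening move the paper makes in equation~\eqref{Cauchy–Schwarz bound} (without truncation). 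The genuine role of Lemma~\ref{dQ lemma}, together with Lemma~\ref{FTC lemma}, is precisely your ODE/tail step $-G'(t)\ge (G(t)/\mathcal{I})^{1/\alpha}$ leading to $G(t)\lesssim t^{-\alpha/(1-\alpha)}$; that is where the quantile-derivative bound is actually used, and where the absolute-continuity issues you allude to are handled.

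Second, your route diverges from the paper after the coarea/Cauchy--Schwarz step. The paper does \emph{not} truncate and optimize: it invokes Lemma~\ref{main L^2 lemma} (the half-mass point argument built on Lemmas~\ref{boundary function lemma}--\ref{half mass bound lemma}) to bound $\int_0^\infty \mathbb{P}_\nu((f_+)^2\ge t)^{1/\alpha}\,dt$ below by $c_4\,\mathbb{E}_\nu[(f_+)^2]^{(2\alpha-1)/(3\alpha-2)}$ directly, then combines this with $\mathbb{E}_\nu[|\nabla(f^2)|]\le 2\sqrt{\mathbb{E}_\nu[f^2]V}$ and solves. Your truncation/H\"older/optimize strategy is a legitimate alternative derivation of the same exponent, arguably more elementary since it avoids the half-mass machinery, but it tracks different constants at every step (a H\"older loss of $(L^2/2)^{1-\alpha}$ and an optimization balance rather than the explicit $c_4$), so your final sentence claiming that the bookkeeping ``reproduces the explicit form of $C_2(\alpha,\mathcal{I}(\nu,\alpha))$ stated in the theorem'' is not credible. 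You will get a constant of the same qualitative shape, but not the specific $2^{(16\alpha-10)/\alpha}\mathcal{I}^{(6\alpha-4)/\alpha}[\,\cdots\,]^{2(1-\alpha)/\alpha}$.
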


By an argument very similar to, and in fact simpler than, that of Theorem~\ref{L^2 theorem},  
we can establish an $L^1$ version of the result.  
This theorem refines Theorem~\ref{L^1 theorem} in the special case $\beta = 1$.

\begin{theorem} \label{L^1 Cheeger theorem}
    Suppose $\mathcal{I}(\nu,\alpha)<\infty$ for some $\frac{1}{2}<\alpha< 1$. Then for any 1-Lipschitz function $f$, there exists constant $C_3(\alpha,\mathcal{I}(\nu,\alpha))<\infty$ such that 
    \begin{align}
        \mathbb{E}_{\nu}[|f-m_{\nu}(f)|] \leq C_3(\alpha,\mathcal{I}(\nu,\alpha))\cdot \mathbb{E}_{\nu}[|\nabla f|]^{\frac{2\alpha-1}{\alpha}}.
    \end{align}
    Here we can take
    \begin{align}
        C_3(\alpha,\mathcal{I}(\nu,\alpha)) = 2(2\mathcal{I}(\nu,\alpha))^{\frac{2\alpha-1}{\alpha}}\left[2\mathcal{I}(\nu,\alpha)^{\tfrac{1}{\alpha}}\left(\frac{\alpha}{2\alpha-1}\right) \right]^{\frac{1-\alpha}{\alpha}}.
    \end{align}
    Equivalently, we have the following bound for the $(\alpha,1)$-Poincaré constant
    \begin{align}
        \mathcal{C}_1\left(\nu,\frac{2\alpha-1}{\alpha},1\right) \leq C_3(\alpha,\mathcal{I}(\nu,\alpha)).
    \end{align}

\end{theorem}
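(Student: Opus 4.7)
The plan is to reduce to a one-dimensional analysis of the pushforward $\mu := f_*\nu$. Writing $g(t) := \nu(f > t)$, one has $\mathbb{E}_\nu[|f-m_\nu(f)|] = \int_{m_\nu(f)}^\infty g(t)\,dt + \int_{-\infty}^{m_\nu(f)} (1-g(t))\,dt$, and by symmetry I will focus on the right-tail contribution. The 1-Lipschitz hypothesis is crucial: it gives $A_t^\varepsilon \subseteq \{f > t - \varepsilon\}$ for $A_t := \{f > t\}$, so the standard coarea formula combined with $\mathcal{I}(\nu,\alpha) < \infty$ yields the integral constraint
\[
\int_{m_\nu(f)}^\infty g(t)^{1/\alpha}\,dt \;\leq\; \mathcal{I}(\nu,\alpha)^{1/\alpha}\,\mathbb{E}_\nu[|\nabla f|],
\]
which, after changing variables to the quantile function $Q$ of $\mu$, is exactly the content of Lemma~\ref{dQ lemma}. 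The same $\alpha$-isoperimetric inequality applied pointwise to each super-level set gives a differential form $-g'(t) \geq (g(t)/\mathcal{I}(\nu,\alpha))^{1/\alpha}$, valid wherever $g(t) \leq 1/2$, which I will use to control the far tail.

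I would then split the right-tail integral at a free truncation point $m_\nu(f) + R$. On the inner piece $[m_\nu(f), m_\nu(f)+R]$, H\"older's inequality with exponents $(1/\alpha,\, 1/(1-\alpha))$ combined with the integral constraint gives
\[
\int_{m_\nu(f)}^{m_\nu(f)+R} g(t)\,dt \;\leq\; R^{1-\alpha}\,\mathcal{I}(\nu,\alpha)\,\mathbb{E}_\nu[|\nabla f|]^{\alpha}.
\]
On the outer piece $[m_\nu(f)+R,\infty)$, I would integrate the ODE $-g' \geq (g/\mathcal{I}(\nu,\alpha))^{1/\alpha}$ starting from $t = m_\nu(f)+R$; this produces a polynomial tail decaying like $(t-m_\nu(f)-R)^{-\alpha/(1-\alpha)}$ which is integrable precisely because $\alpha > 1/2$, yielding a bound proportional to $\mathcal{I}(\nu,\alpha)^{1/\alpha}\, g(m_\nu(f)+R)^{(2\alpha-1)/\alpha}$. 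Monotonicity of $g$ combined with the integral constraint applied on $[m_\nu(f), m_\nu(f)+R]$ then gives $g(m_\nu(f)+R) \leq \mathcal{I}(\nu,\alpha)\,\mathbb{E}_\nu[|\nabla f|]^\alpha / R^\alpha$.

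Substituting, both contributions become powers of $R$ times powers of $\mathcal{I}(\nu,\alpha)$ and $\mathbb{E}_\nu[|\nabla f|]$; optimizing the sum over $R > 0$ (which after algebra amounts to $R \sim \mathbb{E}_\nu[|\nabla f|]^{(\alpha-1)/\alpha}$) yields the clean exponent $(2\alpha-1)/\alpha$ on $\mathbb{E}_\nu[|\nabla f|]$, together with an explicit constant $C_3(\alpha, \mathcal{I}(\nu,\alpha))$ of the stated shape; doubling by symmetry for the left tail completes the bound. The main technical obstacle is the ODE-integration step on the far tail: one must start the integration from a small value $g(m_\nu(f)+R)$ rather than from $1/2$, and track the explicit dependence of the resulting constant on both $\mathcal{I}(\nu,\alpha)$ and this starting value. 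The hypothesis $\alpha > 1/2$ enters exactly here, since the polynomial tail $t^{-\alpha/(1-\alpha)}$ is integrable only when $\alpha/(1-\alpha) > 1$.
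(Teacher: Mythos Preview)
Your approach is correct and yields the right exponent $(2\alpha-1)/\alpha$, but the route differs from the paper's. The paper first extracts a general ``reverse H\"older'' estimate, Lemma~\ref{main L^2 lemma} (applied with $\gamma=1$, $\beta=1/\alpha$): writing $M=\int_0^\infty g(t)\,dt$, one has $\int_0^\infty g(t)^{1/\alpha}\,dt \geq c_4\, M^{\alpha/(2\alpha-1)}$, proved via the quantile function and the half-mass point of Definition~\ref{half-mass point definition}; combining this with the co-area bound finishes the theorem in one line. You instead split $\int g$ at a free level $R$, bound the inner piece by H\"older against the co-area constraint $\int g^{1/\alpha}\le \mathcal I^{1/\alpha}\mathbb E_\nu[|\nabla f|]$, bound the outer piece by integrating the differential inequality $-g'\geq (g/\mathcal I)^{1/\alpha}$, and then optimize over $R$. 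Both arguments rest on the same pointwise isoperimetric input---your ODE step is, after the change of variables $t=Q_f(x)$, exactly the content of Lemmas~\ref{dQ lemma}--\ref{Q_f^gamma tail bound lemma}---so the difference is structural rather than conceptual. Your version is more direct and avoids the half-mass machinery entirely, which is appealing for this theorem in isolation; the paper's packaging pays off because Lemma~\ref{main L^2 lemma} is stated for all $1\leq\gamma<\alpha/(1-\alpha)$ and is reused with $\gamma=2$ to prove the harder Theorem~\ref{L^2 theorem}. Two small points to tighten: your parenthetical tying the \emph{integral} constraint to Lemma~\ref{dQ lemma} is a misattribution (that lemma is the pointwise differential bound; the integral constraint is straight co-area plus the definition of $\mathcal I(\nu,\alpha)$); and integrating the ODE on the outer tail requires the absolute-continuity justification that the paper supplies in Lemma~\ref{FTC lemma}.
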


To see the refinement to Theorem~\ref{L^1 theorem}, note that since $\alpha \leq 1$, we have $\frac{2\alpha - 1}{\alpha} \geq 2\alpha - 1 = \alpha-\beta(1-\alpha)$ for $\beta=1$. Consequently,
\begin{align*}
    \mathcal{C}_1(\nu, 2\alpha - 1, 1)
    \leq \mathcal{C}_1\left(\nu, \frac{2\alpha - 1}{\alpha}, 1\right).
\end{align*}
Moreover, in Section~6 we will also claim that Theorem \ref{L^1 Cheeger theorem} is asymptotically tight.

We now turn to applications of Theorem~\ref{L^2 theorem}.  
In the existing literature, for a product measure $\mu^n$ such that either $C_P(\mu) < \infty$ or $I(\mu) < \infty$, it is well known that for any $1$-Lipschitz function $f: \mathbb{R}^n \to \mathbb{R}$, we have  
\begin{align*}
    \mathrm{Var}_{\mu^n}(f) \leq C
\end{align*}
for some constant $C$ that does not depend on $n$.  
This can be shown either by induction on the Poincaré constant~\cite[Exercise~1]{klartag2024isoperimetric} or, by a more involved argument, through induction on the Cheeger constant~\cite{bobkov1997isoperimetric}.

In \cite{bobkov2009weighted}, the authors study probability measures in high dimensions that do not possess exponential tails, focusing in particular on Cauchy distributions and more generally on $\kappa$-concave probability measures. They establish a weighted Poincaré-type inequality stating that, for any Lipschitz function $f$,
\begin{align*}
    \mathrm{Var}_{\nu}(f) \leq C \cdot \mathbb{E}_{\nu}\!\left[\,|\nabla f(x)|^2 \cdot (1 + |x|^2)\,\right],
\end{align*}
where $\nu$ denotes the Cauchy distribution (under certain restrictions) and $C$ is a universal constant. For the precise formulation, see Theorem 3.1 in \cite{bobkov2009weighted}.

For product measures $\mu^n$ with $\mu$ having bounded support, \cite{talagrand1995concentration} investigated the fluctuations of $1$-Lipschitz functions with respect to the metric $d_1$ (defined in \eqref{d_p definition}) and established the celebrated Talagrand concentration inequality.  
In addition, McDiarmid’s inequality~\cite{boucheron2013concentration} provides another powerful tool to control the fluctuations of $1$-Lipschitz functions with respect to $d_1$.  
Both results yield sharp dimension-dependent fluctuation bounds and sub-Gaussian concentration.

For product measures with heavy tails, a weak Poincaré inequality is available~\cite[Proposition~5.31]{cattiaux2010functional} of the form  
\begin{align*}
    \operatorname{Var}_{\mu^n}(f)
    \leq
    C\left(\frac{n}{s}\right)^{\frac{2}{\alpha}}
    \mathbb{E}_{\mu^n}[|\nabla f|^2]
    + s \cdot \mathrm{Osc}_{\mu^n}(f)^2,
\end{align*}
for every $s \in (0, \tfrac{1}{4})$, where $C$ is a universal constant, $\alpha$ depends on the tail behavior of the measure $\mu$, and  
$\mathrm{Osc}_{\mu^n}(f) = \operatorname{ess}\sup(f) - \operatorname{ess}\inf(f)$ denotes the oscillation of $f$.  
However, this inequality yields meaningful bounds only when $f$ is bounded.

In our paper, as a principal application of Theorem \ref{L^2 theorem}, we establish non-trivial bounds for Lipschitz functions under product measures without exponential tails. Suppose $\mu$ is a probability measure on $\mathbb{R}$ such that $\operatorname{Var}_{\mu}(x) < \infty$, but $\mu$ does not satisfy the Poincaré inequality, i.e., $C_P(\mu) = \infty$. In this setting, the best known bound for the variance in the product measure is  
\begin{align*}
    \sup_{f \in \operatorname{Lip}_1(\mathbb{R}^n)} \operatorname{Var}_{\mu^n}(f) = O(n),
\end{align*}  
which follows from the Efron–Stein inequality \cite{efron1981jackknife}. In the following theorem, we demonstrate an improvement of the bound using the $\alpha$-Poincaré inequality.  

\begin{theorem}\label{product theorem}
    If $\mathcal{C}_2(\mu, \alpha)<\infty$ for some $0<\alpha<1$, then for any 1-Lipschitz $f$,
    \begin{align}
        \operatorname{Var}_{\mu^n}(f) \leq \mathcal{C}_2(\mu, \alpha)\cdot n^{1-\alpha}\cdot \mathbb{E}_{\mu^n}[|\nabla f|^2]^{\alpha} = O(n^{1-\alpha}).
    \end{align}
    Together with Theorem \ref{L^2 theorem}, if $\mathcal{I}(\mu, \alpha)<\infty$ for some $\frac{2}{3}<\alpha<1$, then for any 1-Lipschitz $f$,
    \begin{align}
        \operatorname{Var}_{\mu^n}(f) \leq C_2(\alpha,\mathcal{I}(\mu,\alpha))\cdot n^{1-\frac{3\alpha-2}{\alpha}}\cdot \mathbb{E}_{\mu^n}[|\nabla f|^2]^{\frac{3\alpha-2}{\alpha}} = O(n^{\frac{2(1-\alpha)}{\alpha}}).
    \end{align}
\end{theorem}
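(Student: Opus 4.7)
The plan is to use a tensorization argument based on the Efron--Stein inequality together with the concavity of $x \mapsto x^{\alpha}$ for $0<\alpha<1$. First I would apply Efron--Stein to write
\begin{align*}
    \operatorname{Var}_{\mu^n}(f) \leq \sum_{i=1}^n \mathbb{E}_{\mu^{n-1}}\!\left[\operatorname{Var}_{\mu}^{(i)}(f)\right],
\end{align*}
where $\operatorname{Var}_{\mu}^{(i)}$ denotes the variance in the $i$-th coordinate with the others frozen. For each fixed configuration of the other variables, the map $x_i \mapsto f(x_1,\dots,x_n)$ is still $1$-Lipschitz on $\mathbb{R}$, so the hypothesis $\mathcal{C}_2(\mu,\alpha)<\infty$ gives
\begin{align*}
    \operatorname{Var}_{\mu}^{(i)}(f) \leq \mathbb{E}_{\mu}^{(i)}\!\left[|f - m_{\mu}^{(i)}(f)|^2\right] \leq \mathcal{C}_2(\mu,\alpha)\,\mathbb{E}_{\mu}^{(i)}[|\partial_i f|^2]^{\alpha}.
\end{align*}

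Next I would push the outer expectation through using Jensen's inequality. Since $0<\alpha<1$, concavity of $t\mapsto t^{\alpha}$ yields
\begin{align*}
    \mathbb{E}_{\mu^{n-1}}\!\left[\mathbb{E}_{\mu}^{(i)}[|\partial_i f|^2]^{\alpha}\right] \leq \mathbb{E}_{\mu^n}[|\partial_i f|^2]^{\alpha}.
\end{align*}
Setting $a_i := \mathbb{E}_{\mu^n}[|\partial_i f|^2]$, we have $\sum_{i=1}^n a_i = \mathbb{E}_{\mu^n}[|\nabla f|^2]$. A second application of the same concavity (the power-mean inequality) gives $\sum_{i=1}^n a_i^{\alpha} \leq n^{1-\alpha}\bigl(\sum_{i=1}^n a_i\bigr)^{\alpha}$, and combining all of the above produces
\begin{align*}
    \operatorname{Var}_{\mu^n}(f) \leq \mathcal{C}_2(\mu,\alpha)\cdot n^{1-\alpha}\cdot \mathbb{E}_{\mu^n}[|\nabla f|^2]^{\alpha}.
\end{align*}
The $O(n^{1-\alpha})$ bound follows since $|\nabla f|^2\leq 1$ for $1$-Lipschitz $f$.

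For the second inequality, I would simply invoke Theorem~\ref{L^2 theorem}, which under the assumption $\mathcal{I}(\mu,\alpha)<\infty$ with $\tfrac{2}{3}<\alpha<1$ furnishes a bound on $\mathcal{C}_2\!\left(\mu,\tfrac{3\alpha-2}{\alpha}\right)$. Substituting $\alpha':=\tfrac{3\alpha-2}{\alpha}$ into the first bound and computing $1-\alpha' = \tfrac{2(1-\alpha)}{\alpha}$ delivers the claimed $O\!\left(n^{2(1-\alpha)/\alpha}\right)$ estimate.

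There is no serious obstacle here: the argument is a textbook tensorization, and the only mild subtlety is checking that restricting a Euclidean $1$-Lipschitz function to a single coordinate preserves the $1$-Lipschitz property (which is immediate) so that the one-dimensional $\alpha$-Poincaré constant applies coordinatewise. The two Jensen applications---first to commute the expectation with $t^{\alpha}$, then to convert $\sum a_i^{\alpha}$ into $n^{1-\alpha}(\sum a_i)^{\alpha}$---are what introduce the dimension factor $n^{1-\alpha}$, and together with the exponent arithmetic from Theorem~\ref{L^2 theorem} they account for the final rate.
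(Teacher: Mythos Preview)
Your proposal is correct and follows essentially the same approach as the paper: apply the one-dimensional $\alpha$-Poincar\'e bound coordinatewise, use Jensen once to push the outer expectation through $t\mapsto t^{\alpha}$, and Jensen again to convert $\sum_i a_i^{\alpha}$ into $n^{1-\alpha}\bigl(\sum_i a_i\bigr)^{\alpha}$. The only cosmetic difference is that the paper obtains the intermediate inequality $\operatorname{Var}_{\mu^n}(f)\leq \mathcal{C}_2(\mu,\alpha)\sum_i \mathbb{E}_{\mu^n}[|d_i f|^2]^{\alpha}$ by an explicit induction via the law of total variance (Lemma~\ref{product lemma}), whereas you invoke Efron--Stein as a black box; both routes arrive at the same bound and the remaining steps are identical.
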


Note that, in general, verifying $\mathcal{I}(\mu,\alpha)<\infty$ is relatively straightforward when $\mu$ is a probability measure on $\mathbb{R}$.  
In this case, one can take advantage of~\cite[Lemma~2.2]{bobkov2007large}.  
A systematic approach is to first define the function $I : (0,1) \to \mathbb{R}_+$ by  
$I(x) = d\mu(Q_f(x))$,  
where $d\mu(\cdot)$ is the probability density function of $\mu$ on $\mathbb{R}$, $Q_f$ is the quantile function from Definition~\ref{Q_f definition}, and $f(x) = x$.  
Then, by~\cite[Lemma~2.2]{bobkov2007large}, one can determine whether $\mu$ is $\kappa$-concave.  
Finally, applying Proposition~\ref{kappa concave proposition} yields a bound on $\mathcal{I}(\mu,\alpha)$. Another approach to bounding $\mathcal{I}(\mu,\alpha)$ for a measure $\mu$ on $\mathbb{R}$ is illustrated in the proof of Lemma~\ref{Pareto lemma}, where the bound is obtained directly by analyzing the density and tail behavior of $\mu$.

Next, we turn our attention to the Pareto probability measure $d\mu_{\lambda} = (\lambda-1)x^{-\lambda}$ supported on $[1,\infty)$ introduced in the abstract, and present our main result in this setting. 

\begin{theorem}\label{Pareto theorem}
    For any $\lambda>3$, we have for any 1-Lipschitz function $f$
    \begin{align}
        \operatorname{Var}_{\mu_{\lambda}^n}(f) \leq C(\lambda)n^{\frac{2}{\lambda-1}},
    \end{align}
    where
    \begin{align}
        C(\lambda) = 2^{\frac{6\lambda-16}{\lambda-1}} (\lambda-1)^{-\frac{2\lambda-6}{\lambda}}\left(\frac{4}{\lambda-3}\right)^{\frac{2}{\lambda-1}}.
    \end{align}
    Therefore, we have 
    \begin{align}
        \sup_{f \in \operatorname{Lip}_1(\mathbb{R}^n)} \operatorname{Var}_{\mu_{\lambda}^n}(f) = \Theta(n^{\frac{2}{\lambda-1}}).
    \end{align}
\end{theorem}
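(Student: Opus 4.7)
My plan is to apply Theorem~\ref{product theorem} after first establishing a finite $\alpha$-Cheeger constant for $\mu_{\lambda}$ on $\mathbb{R}$. To make the resulting exponent of $n$ match $\frac{2}{\lambda-1}$, I solve $\frac{2(1-\alpha)}{\alpha} = \frac{2}{\lambda-1}$ to obtain $\alpha = \frac{\lambda-1}{\lambda}$, and observe that the hypothesis $\lambda > 3$ is exactly what is needed to place this $\alpha$ in the range $(\tfrac{2}{3},1)$ required by Theorem~\ref{L^2 theorem}.

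The substantive step is to verify $\mathcal{I}\bigl(\mu_{\lambda}, \tfrac{\lambda-1}{\lambda}\bigr) \leq (\lambda-1)^{-(\lambda-1)/\lambda}$, which I expect to be the content of Lemma~\ref{Pareto lemma}. Because $\mu_{\lambda}$ lives on $\mathbb{R}$, the extremal sets in the isoperimetric problem can be taken to be half-lines $[a,\infty)$. For such $A$ with $\mu_{\lambda}(A) \leq \tfrac{1}{2}$, both $\mu_{\lambda}(A) = a^{1-\lambda}$ and $\mu_{\lambda}(A^{+}) = (\lambda-1)a^{-\lambda}$ are explicit powers of $a$, and the exponent $\alpha = \tfrac{\lambda-1}{\lambda}$ is precisely the one that cancels the $a$-dependence in the isoperimetric ratio $a^{1-\lambda}/((\lambda-1)a^{-\lambda})^{\alpha}$, leaving the constant $(\lambda-1)^{-(\lambda-1)/\lambda}$. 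Intervals of the form $[1,b]$ and bounded subintervals give strictly smaller ratios because their boundary measure is bounded below by $(\lambda-1)b^{-\lambda}$ at a point where $\min\{\mu_\lambda(A),1-\mu_\lambda(A)\}\leq \tfrac{1}{2}$, so the half-line computation controls everything.

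Given this Cheeger bound, the upper bound reduces to substituting $\alpha = \tfrac{\lambda-1}{\lambda}$ and the explicit value of $\mathcal{I}$ into the formula for $C_{2}(\alpha,\mathcal{I}(\nu,\alpha))$ from Theorem~\ref{L^2 theorem} and then applying Theorem~\ref{product theorem}. A short arithmetic simplification is involved but routine: the three exponents $\tfrac{16\alpha-10}{\alpha}$, $\tfrac{6\alpha-4}{\alpha}$, and $\tfrac{2(1-\alpha)}{\alpha}$ all rewrite cleanly as $\tfrac{6\lambda-16}{\lambda-1}$, $\tfrac{2\lambda-6}{\lambda}$, and $\tfrac{2}{\lambda-1}$, and the bracketed middle factor $2\mathcal{I}^{2/\alpha}\cdot 2\alpha^{2}/((3\alpha-2)(1-\alpha))$ collapses to $4/(\lambda-3)$. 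This produces precisely the three stated factors of $C(\lambda)$, using also $\mathbb{E}_{\mu_{\lambda}^{n}}[|\nabla f|^{2}]^{(3\alpha-2)/\alpha} \leq 1$ for $1$-Lipschitz $f$.

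The matching lower bound has already been recorded in the introduction: the map $x \mapsto |x|_{\infty}$ is $1$-Lipschitz with respect to the Euclidean norm, and the Fr\'echet convergence of $n^{-1/(\lambda-1)}\max\{X_{1},\dots,X_{n}\}$ cited from~\cite{embrechts2013modelling} gives $\operatorname{Var}_{\mu_{\lambda}^{n}}(|x|_{\infty}) = \Omega(n^{2/(\lambda-1)})$, which combines with the upper bound to yield the $\Theta$ statement. The only genuine obstacle is the isoperimetric estimate in the second paragraph; once that bound is in hand, the theorem reduces to plugging into Theorem~\ref{product theorem} and simplifying.
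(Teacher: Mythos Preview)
Your proposal is correct and follows essentially the same route as the paper: establish $\mathcal{I}(\mu_\lambda,\tfrac{\lambda-1}{\lambda})\le(\lambda-1)^{-(\lambda-1)/\lambda}$ (this is exactly Lemma~\ref{Pareto lemma}), substitute $\alpha=\tfrac{\lambda-1}{\lambda}$ into Theorem~\ref{product theorem} and simplify, then invoke Fr\'echet convergence of $n^{-1/(\lambda-1)}|x|_\infty$ for the lower bound. The only cosmetic difference is that the paper's proof of the isoperimetric lemma argues by showing any set of mass $p$ must meet $[1,p^{-1/(\lambda-1)}+\epsilon)$ rather than explicitly reducing to half-lines, but the resulting bound is identical.
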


Finally, we present a variance bound for 1-Lipschitz functions with respect to alternative metrics in product measures. For $x = (x_1, \dots, x_n)$ and $y = (y_1, \dots, y_n) \in \mathbb{R}^n$, and for $1\leq p \leq \infty$, define the distance $d_p(x,y)$ by
\begin{align}\label{d_p definition}
    d_p(x,y) = \left(\sum_{i=1}^{n} |x_i - y_i|^p \right)^{\frac{1}{p}}.
\end{align}
Note that $d_2$ coincides with the usual Euclidean distance. A function $f$ is said to be 1-Lipschitz with respect to $d_p$ (or $d_p$ 1-Lipschitz) if, for all $x, y \in \mathbb{R}^n$,
\begin{align*}
    |f(x) - f(y)| \leq d_p(x,y).
\end{align*}
Note that for $1 < p \leq \infty$, a function $f$ is $1$-Lipschitz with respect to $d_p$ if and only if (for almost everywhere with respect to the Lebesgue measure)
\begin{align*}
    \sum_{i=1}^{n} |d_i f|^{\frac{p}{p-1}} \leq 1,
\end{align*}
where $d_i f$ is the $i$th derivative of $f$:
\begin{align*}
    d_if(x_1,...,x_i,...,x_n) = \limsup_{\epsilon\to 0} \frac{f(x_1,...,x_i+\epsilon,...,x_n)-f(x_1,...,x_i,...,x_n)}{\epsilon}.
\end{align*}
When $p = \infty$, we define $\frac{p}{p-1} = 1$.  
We then obtain the following variance bound for $1$-Lipschitz functions with respect to $d_p$.  
Note that when $p = 1$, the bound follows easily from the Efron--Stein inequality, so we focus only on the case $1 < p \leq \infty$.

\begin{theorem}\label{d_p theorem}
    If $\mathcal{C}_1(\mu, \alpha,2)<\infty$ for some $0<\alpha<1$, then for any 1-Lipschitz $f$ with respect to $d_p$ for $1<p\leq \infty$, we have
    \begin{align}
        \operatorname{Var}_{\mu^n}(f) \leq \mathcal{C}_1(\mu,\alpha,2)\cdot n^{1-\frac{\alpha(p-1)}{p}} \cdot \left(\sum_{i=1}^n \mathbb{E}_{\mu^n}[|d_i f|^{\frac{p}{p-1}}]\right)^{\frac{\alpha(p-1)}{p}} = O(n^{\frac{p-\alpha(p-1)}{p}}).
    \end{align}
    Suppose $\mathcal{C}_2(\mu,\alpha)<\infty$ for some $0<\alpha<1$, then then for any 1-Lipschitz $f$ with respect to $d_p$ for $1<p\leq 2$, we have
    \begin{align}
        \operatorname{Var}_{\mu^n}(f) \leq \mathcal{C}_2(\mu,\alpha)\cdot n^{1-\frac{2\alpha(p-1)}{p}} \cdot \left(\sum_{i=1}^n \mathbb{E}_{\mu^n}[|d_i f|^{\frac{p}{p-1}}]\right)^{\frac{2\alpha(p-1)}{p}} = O(n^{\frac{p-2\alpha(p-1)}{p}}).
    \end{align}
    They can be combined with Theorem \ref{L^1 theorem} and \ref{L^2 theorem}.
\end{theorem}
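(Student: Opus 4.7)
The plan is to follow the template established in Theorem~\ref{product theorem}: tensorize the variance via the Efron--Stein inequality, apply the one-dimensional $\alpha$-Poincar\'e-type estimates to each coordinate slice, and then use Jensen's and H\"older's inequalities to express the result in terms of the anisotropic gradient sum $\sum_i \mathbb{E}_{\mu^n}[|d_i f|^{p/(p-1)}]$ imposed by the $d_p$ geometry.

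The key preliminary observation is that if $f$ is $1$-Lipschitz with respect to $d_p$, then two points differing only in the $i$-th coordinate satisfy $d_p(x,y) = |x_i - y_i|$ for every $p$, so each slice $f_i(t) := f(x_1,\dots,x_{i-1},t,x_{i+1},\dots,x_n)$ is $1$-Lipschitz on $\mathbb{R}$. This makes the one-dimensional constants $\mathcal{C}_1(\mu,\alpha,2)$ and $\mathcal{C}_2(\mu,\alpha)$ directly applicable to each $f_i$.

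Starting from Efron--Stein, $\operatorname{Var}_{\mu^n}(f) \leq \sum_{i=1}^n \mathbb{E}_{\mu^{n-1}}[\operatorname{Var}_{\mu}(f_i)]$, I apply the relevant definition to each slice, producing a factor $\mathbb{E}_{\mu}[|d_i f|]^{\alpha}$ in the first case and $\mathbb{E}_{\mu}[|d_i f|^2]^{\alpha}$ in the second. Averaging over the remaining coordinates and applying Jensen's inequality to the concave map $x \mapsto x^{\alpha}$ (valid since $0<\alpha<1$) pulls the expectation inside, yielding $\mathbb{E}_{\mu^n}[|d_i f|]^{\alpha}$ (respectively $\mathbb{E}_{\mu^n}[|d_i f|^2]^{\alpha}$). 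Writing $q = p/(p-1)$, I then use H\"older's inequality $\mathbb{E}[|d_i f|] \leq \mathbb{E}[|d_i f|^{q}]^{1/q}$ (always valid since $q \geq 1$) for the first bound, and Lyapunov's inequality $\mathbb{E}[|d_i f|^2] \leq \mathbb{E}[|d_i f|^{q}]^{2/q}$ for the second, which requires $q \geq 2$, i.e.\ $p \leq 2$---this is precisely what dictates the restriction in the second statement. A final application of the concavity of $x \mapsto x^{\beta}$ with $\beta \in \{\alpha/q,\, 2\alpha/q\}$, via Jensen on the uniform measure over $\{1,\dots,n\}$, gives $\sum_i a_i^{\beta} \leq n^{1-\beta}\bigl(\sum_i a_i\bigr)^{\beta}$, producing the dimensional prefactors $n^{1-\alpha(p-1)/p}$ and $n^{1-2\alpha(p-1)/p}$ respectively. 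The closing $O$-estimates follow from the pointwise Lipschitz identity $\sum_i |d_i f|^{p/(p-1)} \leq 1$, which after integration bounds the aggregated gradient sum by $1$.

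No genuine obstacle arises; the entire argument is careful bookkeeping. The only subtleties are ensuring each Jensen step uses an exponent in $[0,1]$---automatic given $0<\alpha<1$ and $q \geq 1$ (resp.\ $q \geq 2$)---and that Lyapunov is applied in the correct direction, which is exactly what restricts the $\mathcal{C}_2$ version to $1 < p \leq 2$.
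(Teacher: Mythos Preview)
Your proposal is correct and essentially matches the paper's argument. The only cosmetic difference is that you invoke the Efron--Stein inequality directly to obtain $\operatorname{Var}_{\mu^n}(f)\le \sum_i \mathbb{E}_{\mu^{n-1}}[\operatorname{Var}_\mu(f_i)]$, whereas the paper re-derives this bound by an explicit law-of-total-variance induction (Lemma~\ref{d_p lemma}), applying the H\"older/Lyapunov step already in the one-dimensional base case rather than after the tensorization; the subsequent Jensen steps and the final concavity bound $\sum_i a_i^\beta \le n^{1-\beta}\bigl(\sum_i a_i\bigr)^\beta$ are identical.
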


The paper is organized to gradually develop the main ideas and techniques before applying them to increasingly structured settings.  
In Section~2, we establish the technical foundations of the paper.  
This section introduces the key analytic tools—most notably quantile-based estimates and truncation arguments—that underlie all subsequent proofs.  
In particular, Lemmas~\ref{truncation lemma} and~\ref{main L^2 lemma} encapsulate the core mechanism for converting geometric information into moments bounds.

Section~3 applies these tools to derive the main functional inequalities.  
We first prove Theorems~\ref{L^1 theorem} and~\ref{L^2 theorem}, which establish Cheeger-type implications for heavy-tailed measures.  
The section concludes with the proof of Theorem~\ref{L^1 Cheeger theorem}, showing how the $L^2$ argument can be adapted to an $L^1$ framework.

In Section~4, we turn to tensorization and product measures.  
Using the inequalities developed earlier, we obtain dimension-dependent variance bounds and identify the correct scaling behavior, leading to the proofs of Theorems~\ref{product theorem} and~\ref{Pareto theorem}.  
Section~5 extends these ideas to more general metrics, where we prove Theorem~\ref{d_p theorem} using an argument parallel to that of the Euclidean metric case.

Finally, Section~6 discusses consequences, examples, and extensions of our results.  
In particular, we analyze the sharpness of the obtained bounds and explain why Theorems~\ref{L^2 theorem} and~\ref{L^1 Cheeger theorem} are optimal up to constants.

\section{Technical lemmas}
In this section, we study some properties of 1-Lipschitz functions when the underlying measure $\nu$ satisfies $\mathcal{I}(\nu,\alpha)<\infty$ for some $\tfrac{1}{2}<\alpha<1$. We begin by recalling the definition of the quantile function associated with a random variable $f$ under $\nu$.

\begin{definition} \label{Q_f definition}
Let $f$ be a random variable defined on $(\mathbb{R}^n,\nu)$. The \emph{quantile function} of $f$ is the increasing function $Q_f:(0,1)\to\mathbb{R}$ defined by
\begin{align}
    Q_f(x) = \inf\{t \in \mathbb{R} \;:\; \mathbb{P}_\nu(f \leq t) \geq x\}.
\end{align}
Throughout this section, $\nu$ will be a fixed probability measure on $\mathbb{R}^n$, and we therefore omit $\nu$ from the notation of $Q_f$.
\end{definition}

Note that if $m_{\nu}(f)=0$, then $Q_f(\tfrac{1}{2})=0$. We now introduce the notion of one-sided derivatives for the quantile function. The right derivative of $Q_f$ at $x$ is defined by
\begin{align*}
    \frac{d^+}{dx}Q_f(x) = \limsup_{\epsilon \to 0} \frac{Q_f(x+\epsilon)-Q_f(x)}{\epsilon}.
\end{align*}
Similarly, the left derivative of $Q_f$ at $x$ is given by
\begin{align*}
    \frac{d^-}{dx}Q_f(x) = \limsup_{\epsilon \to 0} \frac{Q_f(x)-Q_f(x-\epsilon)}{\epsilon}.
\end{align*}
We define the derivative of $Q_f$ at $x$ to be
\begin{align*}
    \frac{d}{dx}Q_f(x) = \max\!\left\{\frac{d^+}{dx}Q_f(x), \frac{d^-}{dx}Q_f(x)\right\}.
\end{align*}

We now claim that $\tfrac{d}{dx}Q_f(x)$ can be controlled by the $\alpha$-Cheeger constant $\mathcal{I}(\nu,\alpha)$. Intuitively, $\tfrac{d}{dx}Q_f(x)$ describes the rate at which the quantile function changes, and thus quantifies how much $f$ may fluctuate under $\nu$. Therefore, the following lemma shows that the fluctuation of a 1-Lipschitz function is bounded by the $\alpha$-Cheeger constant of $\nu$. And this is the starting point of the whole argument.

\begin{lemma}\label{dQ lemma}
    Suppose $\mathcal{I}(\nu,\alpha)<\infty$ for some $0<\alpha\leq 1$ and let $f$ be a 1-Lipschitz function. Then for all $x \in (0,1)$,
    \begin{align}
        \frac{d}{dx}Q_f(x) \leq \left(\frac{\mathcal{I}(\nu,\alpha)}{\min\{x,1-x\}}\right)^{\tfrac{1}{\alpha}}.
    \end{align}
\end{lemma}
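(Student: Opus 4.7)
The plan is to exploit the $1$-Lipschitz property to relate the boundary enlargement $\nu(A^+)$ of a sublevel set of $f$ to the growth rate of the distribution function $F(t) := \nu(\{f \leq t\})$, then invoke the $\alpha$-Cheeger inequality to bound that growth rate from below, and finally invert the resulting estimate on $F$ into an estimate on $Q_f = F^{-1}$.

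First I would fix $x \in (0,1)$, set $t := Q_f(x)$ and $A := \{f \leq t\}$, so $\nu(A) = F(t) \geq x$. The $1$-Lipschitz property immediately gives $A^\epsilon \subseteq \{f \leq t+\epsilon\}$, hence $\nu(A^\epsilon) \leq F(t+\epsilon)$ and therefore
\[
\nu(A^+) \;\leq\; \liminf_{\epsilon \to 0^+} \frac{F(t+\epsilon) - F(t)}{\epsilon}.
\]
The definition of $\mathcal{I}(\nu,\alpha)$ supplies the complementary lower bound
\[
\nu(A^+) \;\geq\; \Bigl(\tfrac{\min\{\nu(A),\, 1-\nu(A)\}}{\mathcal{I}(\nu,\alpha)}\Bigr)^{1/\alpha}.
\]
A preliminary observation, which I would record first, is that $F$ has no plateaus inside $(0,1)$: if $F$ were constant on some interval with value in $(0,1)$, applying the two displays above to its left endpoint would force $\min\{\nu(A),1-\nu(A)\}=0$. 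Consequently $Q_f$ has no jumps on $(0,1)$, and discontinuities of $F$ produce only intervals of constancy of $Q_f$, on which the relevant one-sided derivative is automatically $0$.

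In the generic case where $F$ is continuous at $t$, one has $F(t)=x$, so $\min\{\nu(A),1-\nu(A)\}=\min\{x,1-x\}$. Using the identity $Q_f(F(t+\epsilon)) \leq t+\epsilon$ (valid by the definition of $Q_f$ and right-continuity of $F$), set $y_\epsilon := F(t+\epsilon)$; by the no-plateau step $y_\epsilon > x$ and $y_\epsilon \to x^+$, and
\[
\frac{Q_f(y_\epsilon) - Q_f(x)}{y_\epsilon - x} \;\leq\; \frac{\epsilon}{F(t+\epsilon) - F(t)}.
\]
Taking $\limsup_{\epsilon \to 0^+}$ and combining with the two displays above yields
\[
\frac{d^+}{dx} Q_f(x) \;\leq\; \Bigl(\tfrac{\mathcal{I}(\nu,\alpha)}{\min\{x,1-x\}}\Bigr)^{1/\alpha}.
\]
For the left derivative I would run the analogous argument with $B := \{f \geq t\}$ and $y_\epsilon := F(t-\epsilon)$, using the complementary inclusion $B^\epsilon \subseteq \{f \geq t-\epsilon\}$ from $1$-Lipschitzness; equivalently, one can appeal to the symmetry $Q_{-f}(y) = -Q_f(1-y)$ to reduce the left derivative at $x$ to the right derivative at $1-x$, where the bound is invariant under $x \leftrightarrow 1-x$.

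The main obstacle is not the core inequality, which is essentially a single line coupling Lipschitz enlargement with the Cheeger definition, but rather the careful bookkeeping between the $\liminf$ in $\nu(A^+)$, the $\limsup$ in $\tfrac{d}{dx}Q_f$, the right-continuity of $F$, and the degenerate cases produced by jumps of $F$. The no-plateau observation is exactly what makes all of these mesh together without losing any constant.
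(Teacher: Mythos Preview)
Your proposal is correct and follows essentially the same route as the paper: couple the $1$-Lipschitz inclusion $A^\epsilon \subseteq \{f \le t+\epsilon\}$ with the $\alpha$-Cheeger lower bound on $\nu(A^+)$, then invert to a bound on the quantile derivative (the paper uses upper level sets and the left derivative, you use lower level sets and the right derivative, which is purely cosmetic). The one place where your write-up is thinner than the paper's is the passage from the $\limsup$ along the specific family $y_\epsilon=F(t+\epsilon)$ to the full $\limsup$ defining $\tfrac{d^+}{dx}Q_f(x)$---the paper explicitly calls this ``exchanging horizontal and vertical increments'' and spends a paragraph on it, and while your no-plateau observation is indeed the key ingredient, you still need to check that values of $y$ lying in jump intervals of $F$ (where $Q_f$ is locally constant) produce difference quotients dominated by those already captured along $y_\epsilon$.
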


\begin{proof}
We prove the bound for $\tfrac{d^-}{dx}Q_f(x)$; the case of $\tfrac{d^+}{dx}Q_f(x)$ follows by the same argument. Define the level set
\begin{align}\label{upper level set}
    A(f,t) := \{y \in \mathbb{R}^n \;:\; f(y) \geq t\}.
\end{align}
Note that by definition we have $\nu(A(f,Q_f(x))) = 1-x$ for all $0<x<1$. Recall the definition $A^{\epsilon} = \{x\in \mathbb{R}^n\;|\; d(x,A)\leq \epsilon\}$ in the introduction section. Since $f$ is 1-Lipschitz, it follows that $A(f,t)^{\epsilon} \subseteq A(f,t-\epsilon)$ for all $t \in \mathbb{R}$ and $\epsilon \geq 0$. Consequently,
\begin{align*}
    \frac{d^-}{dx}Q_f(x) &= \limsup_{\epsilon \to 0} \frac{Q_f(x) - Q_f(x-\epsilon)}{\epsilon} \\
        &= \limsup_{\epsilon \to 0} \frac{\epsilon}{\nu(A(f,Q_f(x)-\epsilon)) - \nu(A(f,Q_f(x)))} \\
        &\leq \limsup_{\epsilon \to 0} \frac{\epsilon}{\nu(A(f,Q_f(x))^{\epsilon}) - \nu(A(f,Q_f(x)))} \\
        &= \frac{1}{\nu(A(f,Q_f(x))^+)} \\
        &\leq \left(\frac{\mathcal{I}(\nu,\alpha)}{\min\{x,1-x\}}\right)^{\tfrac{1}{\alpha}}.
\end{align*}
The key step above is the second equality, which follows from exchanging the roles of horizontal and vertical increments.  
Viewing the graph of the function $Q_f$, the derivative is usually interpreted as taking the horizontal increment to zero and measuring the corresponding vertical change.  
Equivalently, one may take the vertical increment to zero and examine the induced horizontal change.  
These two perspectives are interchangeable and lead to the same expression.

To make this precise, we only prove the $\leq$ inequality; the reverse inequality follows from the same argument.  
Consider a sequence $x_1, x_2, \ldots \to x^-$ converging to $x$ from the left.  
Then $\{Q_f(x_i)\}$ is a bounded increasing sequence and therefore converges.

If $\lim_{n\to\infty} Q_f(x_n) < Q_f(x)$, then for any  
$\epsilon < Q_f(x) - \lim_{n\to\infty} Q_f(x_n)$ we have
\begin{align*}
    \nu(A(f,Q_f(x)-\epsilon)) - \nu(A(f,Q_f(x))) = 0,
\end{align*}
and consequently
\begin{align*}
    \limsup_{\epsilon \to 0}
    \frac{\epsilon}{\nu(A(f,Q_f(x)-\epsilon)) - \nu(A(f,Q_f(x)))} = \infty.
\end{align*}

Now suppose instead that $\lim_{n\to\infty} Q_f(x_n) = Q_f(x)$.  
Then, since $\{Q_f(x_n)\}$ converges to $Q_f(x)$, we obtain
\begin{align*}
    \limsup_{\epsilon \to 0}
    \frac{\epsilon}{\nu(A(f,Q_f(x)-\epsilon)) - \nu(A(f,Q_f(x)))}
    \geq
    \limsup_{n\to\infty}
    \frac{Q_f(x) - Q_f(x_n)}{x - x_n}.
\end{align*}

Taking the supremum over all sequences $x_1, x_2, \ldots \to x^-$ yields
\begin{align*} \limsup_{\epsilon \to 0} \frac{Q_f(x) - Q_f(x-\epsilon)}{\epsilon} &= \sup_{x_1,x_2,...\to x^-} \limsup_{n\to \infty} \frac{Q_f(x)-Q_f(x_n)}{x-x_n}\\ &\leq \limsup_{\epsilon \to 0} \frac{\epsilon}{\nu(A(f,Q_f(x)-\epsilon)) - \nu(A(f,Q_f(x)))}. 
\end{align*}
\end{proof}

Given a bound on the derivative of the function $Q_f$, it is natural to expect that one can obtain a bound on the function itself by integrating the derivative.  
However, the derivative we consider here is defined in a slightly nonstandard way: we use $\limsup$ instead of the usual limit.  
Moreover, it is not immediately clear—at least in a fully rigorous sense—whether the fundamental theorem of calculus continues to hold in the form  
\begin{align}
    Q_f(t_1) - Q_f(t_2) = \int_{t_1}^{t_2} \frac{d}{dx} Q_f(x) \, dx.
\end{align}

One possible reference supporting an affirmative answer to the applicability of the fundamental theorem of calculus in our setting is~\cite[Section~6.2, Exercise~23]{royden1988real}.  
However, since this result appears only as an exercise and its derivation from the text is not entirely clear to the author, we provide a complete proof here for the sake of completeness.

\begin{lemma}\label{FTC lemma}
    Suppose $\mathcal{I}(\nu,\alpha) < \infty$ for some $0 < \alpha \leq 1$, and let $f$ be a $1$-Lipschitz function. Then, for any $0 < t_1 < t_2 < 1$, the quantile function $Q_f(x)$ is uniformly Lipschitz on $[t_1, t_2]$. Hence, if $m_{\nu}(f) = 0$ and $\tfrac{1}{2}<t<1$ (the case $0<t < \tfrac{1}{2}$ follows analogously), we obtain  
\begin{align}
    Q_f(t) &= \int_{\frac{1}{2}}^{t} \frac{d}{dx} Q_f(x) \, dx \\
    &\leq \mathcal{I}(\nu,\alpha)^{\frac{1}{\alpha}} \frac{\alpha}{1 - \alpha} (1 - t)^{1 - \frac{1}{\alpha}}.
\end{align}
\end{lemma}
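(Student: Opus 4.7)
The plan is to first upgrade the pointwise $\limsup$-derivative bound of Lemma~\ref{dQ lemma} to genuine Lipschitz regularity of $Q_f$ on closed sub-intervals of $(0,1)$, then apply the classical FTC for absolutely continuous functions, and finally evaluate an elementary one-variable integral. Fix $0 < t_1 < t_2 < 1$ and set $L := (\mathcal{I}(\nu,\alpha)/\min\{t_1, 1-t_2\})^{1/\alpha}$; Lemma~\ref{dQ lemma} gives $\frac{d}{dx}Q_f(x) \leq L$ on $[t_1, t_2]$. Since $Q_f$ is monotone, the only possible discontinuities are jumps, and a jump at $x_0$ would force $\frac{d^+}{dx}Q_f(x_0) = +\infty$, contradicting this finite bound; hence $Q_f$ is continuous on $[t_1, t_2]$.

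Next I invoke the classical increasing-function theorem: a continuous function on a closed interval whose upper right Dini derivate is pointwise bounded by $L$ is $L$-Lipschitz. Concretely, for any $\epsilon > 0$ and any $x \in [t_1, t_2]$, a standard supremum argument applied to the set
\begin{align*}
    S_\epsilon := \{y \in [x, t_2] : Q_f(z) - Q_f(x) \leq (L+\epsilon)(z-x) \text{ for all } z \in [x,y]\}
\end{align*}
shows, via the derivate bound at $\sup S_\epsilon$, that $\sup S_\epsilon = t_2$; sending $\epsilon \to 0$ yields Lipschitz continuity with constant $L$ on $[t_1, t_2]$. In particular, on $[1/2, t]$ for any $1/2 < t < 1$, $Q_f$ is absolutely continuous, so the classical FTC applies: $Q_f(t) - Q_f(1/2) = \int_{1/2}^t Q_f'(x)\,dx$. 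Since $Q_f'(x)$ agrees almost everywhere with $\frac{d}{dx}Q_f(x)$ (both being limits of monotone difference quotients wherever they exist), Lemma~\ref{dQ lemma} supplies the a.e.\ bound $Q_f'(x) \leq (\mathcal{I}(\nu,\alpha)/(1-x))^{1/\alpha}$ on $(1/2,1)$; combined with $Q_f(1/2) = m_\nu(f) = 0$ this gives both the displayed integral identity and the intermediate inequality. The elementary antiderivative $\int (1-x)^{-1/\alpha}\,dx = \tfrac{\alpha}{1-\alpha}(1-x)^{1-1/\alpha}$ then produces $\tfrac{\alpha}{1-\alpha}\bigl[(1-t)^{1-1/\alpha} - (1/2)^{1-1/\alpha}\bigr]$, and since $1 - 1/\alpha < 0$ the subtracted (positive) term may be dropped to yield the stated bound.

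The main obstacle is the first step—passing from the $\limsup$-style derivate bound of Lemma~\ref{dQ lemma} to an honest Lipschitz estimate—because the $\limsup$-definition is weaker than the classical derivative and does not \emph{a priori} control finite increments. Ruling out jumps via the infinite-derivate argument, and then invoking the Dini-derivate increasing-function theorem, is precisely the bridge. Everything afterward is routine: absolute continuity of Lipschitz functions, compatibility between the classical and the $\limsup$ derivatives for a monotone Lipschitz function, and a direct integration.
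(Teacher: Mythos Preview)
Your proof is correct and follows the same overall architecture as the paper's---upgrade the pointwise $\limsup$-derivate bound of Lemma~\ref{dQ lemma} to genuine Lipschitz regularity on compact sub-intervals, deduce absolute continuity, apply the classical FTC, and integrate explicitly. The difference lies entirely in how the Lipschitz step is executed. The paper argues by contradiction via bisection: assuming a pair $a<b$ violating the Lipschitz bound, one repeatedly halves the interval to produce a nested sequence converging to some $x^*$, and then shows (with a slightly delicate case split according to whether the sequence approaches $x^*$ from one side or from both sides) that this contradicts Lemma~\ref{dQ lemma}. You instead first rule out jumps directly---a right jump would force the right upper derivate to be $+\infty$---and then invoke the classical Dini-derivate increasing-function theorem, proved by the clean ``advance the supremum of $S_\epsilon$'' argument. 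Your route avoids the two-sided versus one-sided case analysis and is arguably more transparent; the paper's bisection is more self-contained in that it does not name an external lemma. Both lead to the same conclusion with the same constant.
 One small remark: your continuity step implicitly uses that $Q_f$, defined as an infimum, is automatically left-continuous, so only right jumps need to be excluded; it would be worth saying this explicitly.
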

\begin{proof}
    Without loss of generality, suppose $1 - t_2 < t_1$.  
    We claim that $Q_f(x)$ is uniformly Lipschitz on $[t_1, t_2]$ with constant $\left(\frac{\mathcal{I}(\nu,\alpha)}{1 - t_2}\right)^{\frac{1}{\alpha}}$.  
    We proceed by contradiction.  
    Assume that there exist $t_1 \leq a < b \leq t_2$ such that  
    \begin{align*}
        Q_f(b) - Q_f(a)
        > \left(\frac{\mathcal{I}(\nu,\alpha)}{1 - t_2}\right)^{\frac{1}{\alpha}} \cdot (b - a).
    \end{align*}
    Define $x_1 = \frac{a + b}{2}$.  
    Then either  
    \begin{align*}
        Q_f(b) - Q_f(x_1)
        > \left(\frac{\mathcal{I}(\nu,\alpha)}{1 - t_2}\right)^{\frac{1}{\alpha}} \cdot (b - x_1),
    \end{align*}
    or  
    \begin{align*}
        Q_f(x_1) - Q_f(a)
        > \left(\frac{\mathcal{I}(\nu,\alpha)}{1 - t_2}\right)^{\frac{1}{\alpha}} \cdot (x_1 - a).
    \end{align*}
    Without loss of generality, suppose the interval $[x_1, b]$ violates the Lipschitz condition.  
    We then define $x_2 = \frac{x_1 + b}{2}$.  
    Once again, either $[x_1, x_2]$ or $[x_2, b]$ contradicts the Lipschitz bound.  
    Assume $[x_1, x_2]$ is the violating interval and define $x_3 = \frac{x_1 + x_2}{2}$. If both the interval violate the Lipschitz condition, pick on the next interval arbitrarily. By continuing this process, we construct a non-repeating sequence $\{x_i\}_{i \geq 1}$ contained in $[a, b]$. Since $[a, b]$ is compact and $\{x_i\}_{i \geq 1}$ is Cauchy, there exists $x^*$ such that $x_n \to x^*$.  
    Suppose there exists $N \in \mathbb{N}$ such that either $x_n < x^*$ for all $n \geq N$, or $x_n > x^*$ for all $n \geq N$.  
    Then $x^* \in \{x_i\}_{i \geq 1} \cup \{a, b\}$.  
    Consequently, the convergent sequence $\{x_i\}_{i \geq 1} \to x^*$ contradicts Lemma~\ref{dQ lemma}.  
    
    On the other hand, suppose the sequence approaches $x^*$ from both sides.  
    Equivalently, there exists a sequence of open intervals $(x_{p(i)}, x_{q(i)})_{i \geq 1}$ such that  
    $x^* \in (x_{p(i)}, x_{q(i)})$ for all $i$,  
    $\lim_{i \to \infty} (x_{q(i)} - x_{p(i)}) = 0$,  
    and $|p(i) - q(i)| = 1$.  
    Then we have for all $i\geq 1$ 
    \begin{align*}
        \max\left\{
            \frac{Q_f(x^*) - Q_f(x_{p(i)})}{x^* - x_{p(i)}},
            \frac{Q_f(x_{q(i)}) - Q_f(x^*)}{x_{q(i)} - x^*}
        \right\}
        &\geq \frac{Q_f(x_{q(i)}) - Q_f(x_{p(i)})}{x_{q(i)} - x_{p(i)}} \\
        &\geq \left(\frac{\mathcal{I}(\nu,\alpha)}{1 - t_2}\right)^{\frac{1}{\alpha}}.
    \end{align*}
    Again, this contradicts Lemma~\ref{dQ lemma}, by considering the convergent subsequences $x_{q(i)} \to x^*$ or $x_{p(i)} \to x^*$.

    Since $Q_f(x)$ is uniformly Lipschitz on $[t_1, t_2]$, it follows that $Q_f(x)$ is absolutely continuous on $[t_1, t_2]$.  
    By~\cite[Theorem~3.11]{stein2009real}, $Q_f$ is differentiable almost everywhere on $[t_1, t_2]$, and for any $t > \tfrac{1}{2}$ we can write  
    \begin{align*}
        Q_f(t) - Q_f\!\left(\tfrac{1}{2}\right)
        &= \int_{\tfrac{1}{2}}^{t} \frac{d}{dx} Q_f(x) \, dx \\
        &\leq \int_{\tfrac{1}{2}}^{t} \left(\frac{\mathcal{I}(\nu,\alpha)}{1 - x}\right)^{\tfrac{1}{\alpha}} \, dx \\
        &= \mathcal{I}(\nu,\alpha)^{\frac{1}{\alpha}} \frac{\alpha}{1 - \alpha}
           \Big((1 - t)^{1 - \tfrac{1}{\alpha}} - \left(\tfrac{1}{2}\right)^{1 - \tfrac{1}{\alpha}}\Big) \\
        &\leq \mathcal{I}(\nu,\alpha)^{\frac{1}{\alpha}} \frac{\alpha}{1 - \alpha} (1 - t)^{1 - \frac{1}{\alpha}}.
\end{align*}

\end{proof}

We now state our first main technical lemma. We will use it to prove Theorem \ref{L^1 theorem}. At first glance, the statement may appear somewhat unintuitive. However, it plays a central role in ensuring that the arguments in Section 3 work. We will return to this lemma in Section 3, where its importance will become clear.

\begin{lemma}\label{truncation lemma}
    For any 1-Lipschitz function $f$ with $m_{\nu}(f) = 0$, define the truncated function $f_{[-\frac{1}{2},\frac{1}{2}]}$ as
    \begin{align*}
        f_{[-\frac{1}{2},\frac{1}{2}]} (x) := \frac{1}{2}\mathbb{I}\left(f(x) \geq \frac{1}{2}\right)+ f(x)\cdot \mathbb{I}\left(|f(x)|<\frac{1}{2}\right) + (-\frac{1}{2})\mathbb{I}\left(f(x)\leq -\frac{1}{2}\right),
    \end{align*}
    where $\mathbb{I}(\cdot)$ is the indicator function. Suppose $\mathcal{}\mathcal{I}(\nu,\alpha) <\infty$ for some $\frac{1}{2}<\alpha<1$, then for any $1\leq \beta <\frac{\alpha}{1-\alpha}$ and $1\leq \gamma\leq \beta$,
    \begin{align}
        \mathbb{E}_{\nu}[|f|^{\beta}] \leq c_1(\alpha,\beta,\gamma,\mathcal{I}(\nu,\alpha))\mathbb{E}_{\nu}[|(f^{\gamma})_{[-\frac{1}{2},\frac{1}{2}]}|]^{1-\frac{\beta(1-\alpha)}{\alpha}},
    \end{align}
    where
    \begin{align}
        c_1(\alpha,\beta,\gamma,\mathcal{I}(\nu,\alpha)) = 1+\mathcal{I}(\nu,\alpha)^{\frac{\beta}{\alpha}} \cdot \left(\frac{\alpha}{1-\alpha}\right)^{\beta}\cdot \left(\frac{\alpha}{\alpha-\beta(1-\alpha)}\right)\cdot2^{\gamma-\frac{(\gamma-1)\beta(1-\alpha)}{\alpha}}.
    \end{align}
\end{lemma}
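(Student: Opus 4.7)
The plan is to bound $\mathbb{E}_\nu[|f|^\beta]$ by splitting at the threshold $s := (1/2)^{1/\gamma}$, which is precisely the value of $|f|$ above which the truncation of $f^\gamma$ saturates; in particular $|(f^\gamma)_{[-1/2,1/2]}| = \min(|f|^\gamma,\, 1/2)$. Combined with the assumption $m_\nu(f) = 0$, Lemma~\ref{FTC lemma} applied on both sides of the median gives the polynomial tail estimate
\begin{align*}
    \nu(|f| > u) \leq 2(C/u)^{M}, \qquad u > 0,
\end{align*}
where $C := \mathcal{I}(\nu,\alpha)^{1/\alpha}\,\alpha/(1-\alpha)$ and $M := \alpha/(1-\alpha)$, obtained by inverting $|Q_f(x)| \leq C(\min\{x,1-x\})^{1-1/\alpha}$.

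Write $E := \mathbb{E}_\nu[|(f^\gamma)_{[-1/2,1/2]}|]$ and split
\begin{align*}
    \mathbb{E}_\nu[|f|^\beta] = \mathbb{E}_\nu[|f|^\beta\,\mathbb{I}(|f|\leq s)] + \mathbb{E}_\nu[|f|^\beta\,\mathbb{I}(|f|>s)].
\end{align*}
On the small set, $|f|\leq s\leq 1$ together with $\beta\geq\gamma$ gives $|f|^\beta \leq s^{\beta-\gamma}|f|^\gamma = s^{\beta-\gamma}|(f^\gamma)_{[-1/2,1/2]}|$, so this piece is at most $s^{\beta-\gamma}E$. On the large set $\{|f|>s\}$ the truncation equals $\pm 1/2$, so $\mathbb{I}(|f|>s)\leq 2|(f^\gamma)_{[-1/2,1/2]}|$, yielding the Markov-type bound $\nu(|f|>s)\leq 2E$. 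For the large-set contribution write via layer cake
\begin{align*}
    \mathbb{E}_\nu[|f|^\beta\,\mathbb{I}(|f|>s)] = s^\beta\,\nu(|f|>s) + \int_s^\infty \beta u^{\beta-1}\,\nu(|f|>u)\,du
\end{align*}
and bound $\nu(|f|>u) \leq \min(2E,\, 2(C/u)^M)$ inside the integral; the two bounds coincide at $t^\ast := C/E^{1/M}$. When $t^\ast \geq s$, splitting at $t^\ast$ and integrating explicitly gives $2E((t^\ast)^\beta - s^\beta) + \frac{2\beta C^\beta}{M-\beta}E^{(M-\beta)/M}$, and the $-2Es^\beta$ piece exactly cancels the boundary term $2Es^\beta$, leaving $\frac{2MC^\beta}{M-\beta}E^{(M-\beta)/M}$. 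When $t^\ast < s$, the polynomial bound $2(C/u)^M$ dominates throughout $[s,\infty)$ and one converts $C^M s^{\beta-M} = C^\beta(C/s)^{M-\beta}\leq C^\beta E^{(M-\beta)/M}$ via the defining inequality $(C/s)^M\leq E$ of this regime; the same final estimate emerges.

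Summing the two contributions and using $E \leq 1/2 \leq 1$ (so $E \leq E^{(M-\beta)/M}$) produces $\mathbb{E}_\nu[|f|^\beta]\leq \bigl(s^{\beta-\gamma}+\tfrac{2MC^\beta}{M-\beta}\bigr)E^{(M-\beta)/M}$, which reproduces the claimed form after substituting $M/(M-\beta) = \alpha/(\alpha-\beta(1-\alpha))$ and $C^\beta = \mathcal{I}^{\beta/\alpha}(\alpha/(1-\alpha))^\beta$. I expect the main obstacle to be the two-regime case split together with arranging the cancellation at $t^\ast$ so that no $E$-independent constant survives; the extra factor $2^{\gamma-(\gamma-1)\beta(1-\alpha)/\alpha}$ appearing in the stated $c_1$ hints that the author may use a cruder version of this argument (for example keeping the fixed threshold $s$ throughout and avoiding the optimization over $t^\ast$) to sidestep the case analysis.
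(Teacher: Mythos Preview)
Your plan is correct and, once the case split at $t^\ast$ is carried out as you outline, it yields the inequality with a constant no larger than the stated $c_1$ (indeed your $s^{\beta-\gamma}+2MC^\beta/(M-\beta)\leq 1+2\,\mathcal{I}^{\beta/\alpha}(\alpha/(1-\alpha))^\beta\,\alpha/(\alpha-\beta(1-\alpha))$, and $2\leq 2^{\gamma-(\gamma-1)\beta(1-\alpha)/\alpha}$ for all $\gamma\geq1$). So the lemma follows.

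The route is genuinely different from the paper's. The paper works entirely in quantile space: it fixes $p_1,p_2$ with $Q_f(p_1)=\tfrac12$, $Q_f(p_2)=-\tfrac12$ (so the threshold is $|f|=\tfrac12$, not your $|f|=s=(1/2)^{1/\gamma}$), bounds $\int_{p_1}^1|Q_f|^\beta$ and $\int_0^{p_2}|Q_f|^\beta$ directly via Lemma~\ref{FTC lemma}, then uses concavity of $x\mapsto x^{1-\beta(1-\alpha)/\alpha}$ to combine the two tails into $(p_2+1-p_1)^{1-\beta(1-\alpha)/\alpha}$, and finally invokes $p_2+1-p_1=\nu(|f|>\tfrac12)\leq 2^\gamma E$. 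This avoids any case analysis but costs the extra $2^{\gamma-(\gamma-1)\beta(1-\alpha)/\alpha}$ you noticed. Your approach pushes the threshold to where the truncation of $f^\gamma$ actually saturates, converts the quantile bound into the tail estimate $\nu(|f|>u)\leq 2(C/u)^M$, and then optimizes via the crossover $t^\ast$; the price is the two-regime split, the payoff is the cleaner constant. Your guess that the paper ``keeps the fixed threshold $s$ throughout'' is not quite how it goes---the paper never leaves the quantile picture---but your diagnosis of where the $\gamma$-dependent factor originates is exactly right.
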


\begin{proof}
    Suppose $Q_f(p_1) = \tfrac{1}{2}$ and $Q_f(p_2) = -\tfrac{1}{2}$ for some $p_2 \leq \tfrac{1}{2} \leq p_1$.
    Then we can bound the following integral by Lemma \ref{FTC lemma}:            
    \begin{align*}
        \int_{p_1}^{1} Q_f(x)^{\beta}\,dx 
            &\leq \int_{p_1}^{1}\left(\mathcal{I}(\nu,\alpha)^{\tfrac{1}{\alpha}}\frac{\alpha}{1-\alpha}(1-x)^{1-\tfrac{1}{\alpha}}\right)^{\beta}dx \\
            &= \mathcal{I}(\nu,\alpha)^{\frac{\beta}{\alpha}} \cdot \left(\frac{\alpha}{1-\alpha}\right)^{\beta}\cdot \int_{p_1}^1 (1-x)^{-\frac{\beta(1-\alpha)}{\alpha}}dx\\
            &= \mathcal{I}(\nu,\alpha)^{\frac{\beta}{\alpha}} \cdot \left(\frac{\alpha}{1-\alpha}\right)^{\beta}\cdot \left(\frac{\alpha}{\alpha-\beta(1-\alpha)}\right)\cdot (1-p_1)^{\frac{\alpha - \beta(1-\alpha)}{\alpha}}.
    \end{align*}

    For the last equality, we use the fact that $0<\frac{\beta(1-\alpha)}{\alpha}<1$, so we can evaluate the integral explicitly. Note the same argument we used for the upper tail integral also applies to the lower tail. Thus,
    \begin{align*}
        \int_{0}^{p_2}|Q_f(x)|^{\beta} dx \leq \mathcal{I}(\nu,\alpha)^{\frac{\beta}{\alpha}} \cdot \left(\frac{\alpha}{1-\alpha}\right)^{\beta}\cdot \left(\frac{\alpha}{\alpha-\beta(1-\alpha)}\right)\cdot p_2^{\frac{\alpha - \beta(1-\alpha)}{\alpha}}.
    \end{align*}
    Since the quantile function $Q_f$ under $\mathrm{Unif}(0,1)$ has the same distribution as $f$ under $\nu$, we also know that $|((Q_f)^\gamma)_{[-1/2,1/2]}|$ under $\mathrm{Unif}(0,1)$ has the same distribution as $|(f^\gamma)_{[-1/2,1/2]}|$ under $\nu$. Hence,
    \begin{align*}
        \mathbb{E}_{\nu}[|(f^{\gamma})_{[-\frac{1}{2},\frac{1}{2}]}|]  = \int_{0}^{1} |(Q_f(x)^{\gamma})_{[-\frac{1}{2},\frac{1}{2}]}| dx.
    \end{align*}

    Therefore, we have
    \begin{align*}
        \mathbb{E}_{\nu}[|f|^{\beta}]-\mathbb{E}[|(f^{\gamma})_{[-\frac{1}{2},\frac{1}{2}]}|]  &= \int_{0}^{1} |Q_f(x)|^{\beta} - |(Q_f(x)^{\gamma})_{[-\frac{1}{2},\frac{1}{2}]}| dx\\
        &= \int_{0}^{p_2}|Q_f(x)|^{\beta} - |(Q_f(x)^{\gamma})_{[-\frac{1}{2},\frac{1}{2}]}| dx + \int_{p_2}^{p_1}|Q_f(x)|^{\beta} - |(Q_f(x)^{\gamma})_{[-\frac{1}{2},\frac{1}{2}]}| dx\\ 
        &+ \int_{p_1}^{1}|Q_f(x)|^{\beta} - |(Q_f(x)^{\gamma})_{[-\frac{1}{2},\frac{1}{2}]}| dx\\
        &\leq \int_{0}^{p_2}|Q_f(x)|^{\beta}dx + \int_{p_2}^{p_1}|Q_f(x)|^{\beta} - |(Q_f(x)^{\gamma})_{[-\frac{1}{2},\frac{1}{2}]}| dx + \int_{p_1}^{1}|Q_f(x)|^{\beta} dx\\
    \end{align*}
    Since $1\leq \gamma\leq\beta$ and $|Q_f(x)| \leq \frac{1}{2}$ for $x\in[p_1,p_2]$, we have
    \begin{align*}
        |Q_f(x)|^{\beta} \leq \left|(Q_f(x))^{\gamma}\right| = \left|(Q_f(x))^{\gamma}_{[-\frac{1}{2},\frac{1}{2}]}\right|
    \end{align*}
    for all $x\in [p_1,p_2]$, which implies the middle term is non-positive. Hence, we have
    \begin{align*}
        \mathbb{E}_{\nu}[|f|^{\beta}]-\mathbb{E}[|(f^{\gamma})_{[-\frac{1}{2},\frac{1}{2}]}|] 
        &\leq \int_{0}^{p_2}|Q_f(x)|^{\beta} dx + \int_{p_1}^{1} |Q_f(x)|^{\beta} dx\\
        &\leq \mathcal{I}(\nu,\alpha)^{\frac{\beta}{\alpha}} \cdot \left(\frac{\alpha}{1-\alpha}\right)^{\beta}\cdot \left(\frac{\alpha}{\alpha-\beta(1-\alpha)}\right)\cdot \left(p_2^{1-\frac{\beta(1-\alpha)}{\alpha}} + (1-p_1)^{1-\frac{\beta(1-\alpha)}{\alpha}}\right)
    \end{align*}
    Since $\beta<\frac{\alpha}{1-\alpha}$, we have $1-\frac{\beta(1-\alpha)}{\alpha}\in (0,1)$. Now, by concavity of $x \mapsto x^{1-\frac{\beta(1-\alpha)}{\alpha}}$,
    \begin{align*}
        p_2^{1-\frac{\beta(1-\alpha)}{\alpha}} + (1-p_1)^{1-\frac{\beta(1-\alpha)}{\alpha}} \leq 2\left(\frac{p_2+1-p_1}{2}\right)^{1-\frac{\beta(1-\alpha)}{\alpha}} = 2^{\frac{\beta(1-\alpha)}{\alpha}} (p_2+1-p_1)^{1-\frac{\beta(1-\alpha)}{\alpha}}.
    \end{align*}
    Bringing it back to the above inequality, we get
    \begin{align*}
        \mathbb{E}_{\nu}[|f|^{\beta}]-\mathbb{E}[|(f^{\gamma})_{[-\frac{1}{2},\frac{1}{2}]}|] 
        \leq \mathcal{I}(\nu,\alpha)^{\frac{\beta}{\alpha}} \cdot \left(\frac{\alpha}{1-\alpha}\right)^{\beta}\cdot \left(\frac{\alpha}{\alpha-\beta(1-\alpha)}\right)\cdot2^{\frac{\beta(1-\alpha)}{\alpha}}\cdot(p_2+1-p_1)^{1-\frac{\beta(1-\alpha)}{\alpha}}
    \end{align*}
    Note that on $(0,p_2) \cup(p_1,1)$, we have $(\frac{1}{2})^{\gamma} \leq |(f^{\gamma})_{[-\frac{1}{2},\frac{1}{2}]}(x)| \leq \frac{1}{2}$. Thus, we have
    \begin{align*}
        2^{\gamma}\mathbb{E}_{\nu}[|(f^{\gamma})_{[-\frac{1}{2},\frac{1}{2}]}(x)|] \geq (p_2+1-p_1)
    \end{align*}
    and
    \begin{align*}
        \mathbb{E}_{\nu}[|(f^{\gamma})_{[-\frac{1}{2},\frac{1}{2}]}|] \leq \mathbb{E}_{\nu}[|(f^{\gamma})_{[-\frac{1}{2},\frac{1}{2}]}|]^{1-\frac{\beta(1-\alpha)}{\alpha}}.
    \end{align*}
    Putting everything together yields the final bound:
    \begin{align*}
        \mathbb{E}_{\nu}[|f|^{\beta}] &\leq \mathbb{E}_{\nu}[|(f^{\gamma})_{[-\frac{1}{2},\frac{1}{2}]}|] + \mathcal{I}(\nu,\alpha)^{\frac{\beta}{\alpha}} \cdot \left(\frac{\alpha}{1-\alpha}\right)^{\beta}\cdot \left(\frac{\alpha}{\alpha-\beta(1-\alpha)}\right)\cdot2^{\frac{\beta(1-\alpha)}{\alpha}}\cdot(2^{\gamma}\mathbb{E}_{\nu}[|(f^{\gamma})_{[-\frac{1}{2},\frac{1}{2}]}|])^{1-\frac{\beta(1-\alpha)}{\alpha}}\\
        &\leq \left[1+\mathcal{I}(\nu,\alpha)^{\frac{\beta}{\alpha}} \cdot \left(\frac{\alpha}{1-\alpha}\right)^{\beta}\cdot \left(\frac{\alpha}{\alpha-\beta(1-\alpha)}\right)\cdot2^{\gamma-\frac{(\gamma-1)\beta(1-\alpha)}{\alpha}}\right]\cdot(\mathbb{E}_{\nu}[|(f^{\gamma})_{[-\frac{1}{2},\frac{1}{2}]}|])^{1-\frac{\beta(1-\alpha)}{\alpha}}.
    \end{align*}
    
\end{proof}

The proof of Theorem~\ref{L^1 theorem} relies only on Lemma~\ref{truncation lemma}. Therefore, we suggest that readers skip ahead to Section~3 to better understand the motivation for this type of estimate. Doing so may also make the subsequent lemmas for Theorems~\ref{L^2 theorem} and~\ref{L^1 Cheeger theorem} more intuitive. Nevertheless, for coherence, we prefer to present all technical lemmas together in Section~2.

Having gained some experience with the computation of quantile functions, we now turn to a more technically demanding result required for the proof of Theorem~\ref{L^2 theorem} and \ref{L^1 Cheeger theorem}.  
Before delving into the computations, we first clarify the type of estimates we aim to obtain.  
We describe them here in an intuitive, though informal, manner; full rigor will be restored in the proof itself.  

As emphasized in the proof of Theorem~\ref{L^1 theorem} in Section~3, we wish to avoid using inequalities of the form
\begin{align*}
    \int_{0}^{\infty}\mathbb{P}_{\nu}(f \geq t)^{\frac{1}{\alpha}}\,dt
    \geq
    \int_{0}^{\frac{1}{2}}\mathbb{P}_{\nu}(f \geq t)^{\frac{1}{\alpha}}\,dt.
\end{align*}
Instead, our goal is to establish estimates of the form
\begin{align*}
    \int_{0}^{\infty}\mathbb{P}_{\nu}(f \geq t)^{\frac{1}{\alpha}}\,dt
    \geq ? \cdot
    \left(\int_{0}^{\infty}\mathbb{P}_{\nu}(f \geq t)\,dt\right)^{?}
    \approx ? \cdot \mathbb{E}_{\nu}[|f|]^{?}.
\end{align*}
In other words, we seek to determine the optimal exponents and constants represented by the question marks above.  
This is precisely the objective of Lemma~\ref{main L^2 lemma}, where we establish the corresponding bounds.

To this end, we begin by introducing a family of functions and establishing a key property that they satisfy.
\begin{lemma}\label{boundary function lemma}
    Suppose $\mathcal{I}(\nu,\alpha)<\infty$ for some $\frac{1}{2}<\alpha<1$ and $f$ is 1-Lipschitz with $m_{\nu}(f) = 0$. Then for any $1\leq \gamma <\frac{\alpha}{1-\alpha}$, define $G_{\gamma}: [\frac{1}{2},1) \to \mathbb{R}_{+}$ as
    \begin{align}
        G_{\gamma}(x) = \mathcal{I}(\nu,\alpha)^{\tfrac{\gamma}{\alpha}}\left(\frac{\alpha}{1-\alpha}\right)^{\gamma}(1-x)^{-\frac{\gamma(1-\alpha)}{\alpha}}.
    \end{align}
    Then we will have for any $x\in [\frac{1}{2},1)$,
    \begin{align}
        \frac{d}{dx}G_{\gamma}(x) \geq \gamma \cdot Q_f(x)^{\gamma-1}\cdot \frac{d}{dx}Q_f(x) = \frac{d}{dx}Q_f^{\gamma}(x)
    \end{align}
\end{lemma}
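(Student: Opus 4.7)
The strategy is direct: bound $Q_f(x)^{\gamma-1}$ using Lemma~\ref{FTC lemma} and $\frac{d}{dx}Q_f(x)$ using Lemma~\ref{dQ lemma}, multiply these bounds (with the factor $\gamma$), and observe that the resulting upper bound is \emph{exactly} equal to $\frac{d}{dx}G_\gamma(x)$. The equality $\gamma Q_f(x)^{\gamma-1}\frac{d}{dx}Q_f(x) = \frac{d}{dx}Q_f^\gamma(x)$ is then a separate chain-rule step that uses the local Lipschitz regularity of $Q_f$ already established in Lemma~\ref{FTC lemma}.

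I would first compute the derivative of $G_\gamma$ in closed form,
\begin{align*}
    \frac{d}{dx}G_\gamma(x) = \gamma\cdot \mathcal{I}(\nu,\alpha)^{\gamma/\alpha}\left(\frac{\alpha}{1-\alpha}\right)^{\gamma-1}(1-x)^{-\gamma(1-\alpha)/\alpha - 1},
\end{align*}
using that $\frac{d}{dx}(1-x)^{-\gamma(1-\alpha)/\alpha} = \frac{\gamma(1-\alpha)}{\alpha}(1-x)^{-\gamma(1-\alpha)/\alpha-1}$ and simplifying $\left(\frac{\alpha}{1-\alpha}\right)^\gamma\cdot\frac{1-\alpha}{\alpha} = \left(\frac{\alpha}{1-\alpha}\right)^{\gamma-1}$. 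Since $m_\nu(f)=0$ and $x\geq\tfrac{1}{2}$ imply $Q_f(x)\geq 0$, Lemma~\ref{FTC lemma} applied with exponent $\gamma-1\geq 0$ gives
\begin{align*}
    Q_f(x)^{\gamma-1}\leq \mathcal{I}(\nu,\alpha)^{(\gamma-1)/\alpha}\left(\frac{\alpha}{1-\alpha}\right)^{\gamma-1}(1-x)^{-(\gamma-1)(1-\alpha)/\alpha},
\end{align*}
and Lemma~\ref{dQ lemma} with $\min\{x,1-x\} = 1-x$ gives $\frac{d}{dx}Q_f(x)\leq \mathcal{I}(\nu,\alpha)^{1/\alpha}(1-x)^{-1/\alpha}$.

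Multiplying these by $\gamma$, the $\mathcal{I}(\nu,\alpha)$-exponents add to $\gamma/\alpha$, the constants combine to $\gamma\left(\frac{\alpha}{1-\alpha}\right)^{\gamma-1}$, and the $(1-x)$-exponents add to
\begin{align*}
    -\frac{(\gamma-1)(1-\alpha)}{\alpha}-\frac{1}{\alpha} = -\frac{(\gamma-1)(1-\alpha)+1}{\alpha} = -\frac{\gamma(1-\alpha)+\alpha}{\alpha} = -\frac{\gamma(1-\alpha)}{\alpha}-1,
\end{align*}
which exactly matches the closed-form for $\frac{d}{dx}G_\gamma(x)$ above. This yields the required inequality $\frac{d}{dx}G_\gamma(x)\geq \gamma Q_f(x)^{\gamma-1}\frac{d}{dx}Q_f(x)$, with equality attained by the majorants and no slack lost in the product.

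For the equality $\gamma Q_f(x)^{\gamma-1}\frac{d}{dx}Q_f(x) = \frac{d}{dx}Q_f^\gamma(x)$, Lemma~\ref{FTC lemma} shows that $Q_f$ is locally Lipschitz on $(0,1)$, hence continuous. For any $\epsilon$ with $Q_f(x+\epsilon)\neq Q_f(x)$ I would write
\begin{align*}
    \frac{Q_f(x+\epsilon)^\gamma - Q_f(x)^\gamma}{\epsilon} = \frac{Q_f(x+\epsilon)^\gamma-Q_f(x)^\gamma}{Q_f(x+\epsilon)-Q_f(x)}\cdot\frac{Q_f(x+\epsilon)-Q_f(x)}{\epsilon},
\end{align*}
where the first factor tends to $\gamma Q_f(x)^{\gamma-1}$ by the mean value theorem applied to $t\mapsto t^\gamma$, and taking $\limsup$ yields the identity. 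The degenerate case $Q_f(x)=0$ with $\gamma>1$ is handled by $|Q_f(x+\epsilon)|^\gamma/\epsilon = |Q_f(x+\epsilon)|^{\gamma-1}\cdot|Q_f(x+\epsilon)|/\epsilon$, in which the first factor vanishes by continuity and the second is bounded by local Lipschitzness, so both sides equal zero. The main ``obstacle'' is really just the bookkeeping of exponents in the product step; the $\limsup$-based chain rule is technical but routine once the Lipschitz regularity from Lemma~\ref{FTC lemma} is available.
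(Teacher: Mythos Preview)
Your proposal is correct and follows essentially the same approach as the paper: compute $\frac{d}{dx}G_\gamma(x)$ explicitly, bound $Q_f(x)^{\gamma-1}$ via Lemma~\ref{FTC lemma} and $\frac{d}{dx}Q_f(x)$ via Lemma~\ref{dQ lemma}, and verify that the product of these bounds (times $\gamma$) matches the derivative of $G_\gamma$. Your added justification of the chain-rule identity $\frac{d}{dx}Q_f^\gamma(x)=\gamma Q_f(x)^{\gamma-1}\frac{d}{dx}Q_f(x)$ via local Lipschitz regularity is more careful than the paper, which simply asserts it, but this is a refinement rather than a different route.
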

\begin{proof}
    We have established upper bounds for $ Q_f(x) $ in Lemma \ref{FTC lemma} and for its derivative $ \frac{d}{dx}Q_f(x) $ in Lemma~\ref{dQ lemma}. Combining these results will yield the desired proof. 
    For the left-hand side, by the definition of $ G_{\gamma}$, we obtain
    \begin{align*}
        \frac{d}{dx}G_{\gamma}(x) 
        &= \mathcal{I}(\nu,\alpha)^{\tfrac{\gamma}{\alpha}}\left(\frac{\alpha}{1-\alpha}\right)^{\gamma}(1-x)^{-\frac{\gamma(1-\alpha)}{\alpha}-1}\cdot \frac{\gamma(1-\alpha)}{\alpha}\\
        &= \gamma \cdot \mathcal{I}(\nu,\alpha)^{\tfrac{\gamma}{\alpha}}\left(\frac{\alpha}{1-\alpha}\right)^{\gamma-1}(1-x)^{-\frac{\gamma-\gamma\alpha + \alpha}{\alpha}}.
    \end{align*}
    For the right hand side, we have
    \begin{align*}
        \gamma \cdot Q_f(x)^{\gamma-1}\cdot \frac{d}{dx}Q_f(x) 
        &\leq \gamma\cdot \mathcal{I}(\nu,\alpha)^{\tfrac{\gamma-1}{\alpha}}\left(\frac{\alpha}{1-\alpha}\right)^{\gamma-1}(1-x)^{-\tfrac{(\gamma-1)(1-\alpha)}{\alpha}} \cdot \left(\frac{\mathcal{I}(\nu,\alpha)}{1-x}\right)^{\frac{1}{\alpha}}\\
        &= \gamma \cdot \mathcal{I}(\nu,\alpha)^{\tfrac{\gamma}{\alpha}}\left(\frac{\alpha}{1-\alpha}\right)^{\gamma-1}(1-x)^{-\frac{\gamma-\gamma\alpha + \alpha}{\alpha}}\\
        &= \frac{d}{dx}G_{\gamma}(x)
    \end{align*}
\end{proof}

Since the derivative of $Q_f^{\gamma}$ is bounded by $G_{\gamma}$, we have the following tail bound for $Q_f^{\gamma}$.

\begin{lemma} \label{Q_f^gamma tail bound lemma}
    Suppose $\mathcal{I}(\nu,\alpha)<\infty$ for some $\frac{1}{2}<\alpha<1$ and $f$ is 1-Lipschitz with $m_{\nu}(f) = 0$. Then for any $1\leq \gamma <\frac{\alpha}{1-\alpha}$ and $\frac{1}{2}\leq p <1$, we have
    \begin{align}
        \int_{p}^{1} Q_f(x)^\gamma - Q_f(p)^{\gamma}dx 
        &\leq \int_{p}^{1} G_{\gamma}(x) - G_{\gamma}(p)dx \\
        &= \mathcal{I}(\nu,\alpha)^{\tfrac{\gamma}{\alpha}}\left(\frac{\alpha}{1-\alpha}\right)^{\gamma} \left(\frac{\gamma(1-\alpha)}{\alpha+\gamma\alpha-\gamma}\right) \cdot(1-p)^{\frac{\alpha+\gamma\alpha-\gamma}{\alpha}}\\
        &:= c_2(\alpha,\gamma,\mathcal{I}(\nu,\alpha))\cdot(1-p)^{\frac{\alpha+\gamma\alpha-\gamma}{\alpha}}
    \end{align}
\end{lemma}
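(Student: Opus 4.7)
The plan is to split the claim into two logically separate pieces: first establish the integral inequality $\int_p^1 (Q_f^\gamma - Q_f(p)^\gamma)\,dx \leq \int_p^1 (G_\gamma - G_\gamma(p))\,dx$, and then compute the right-hand side in closed form. The inequality step reduces to a pointwise bound $Q_f(x)^\gamma - Q_f(p)^\gamma \leq G_\gamma(x) - G_\gamma(p)$ for $x \in [p,1)$, which one obtains by integrating the derivative inequality already supplied by Lemma~\ref{boundary function lemma}.

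For the pointwise bound, I would first argue that $Q_f$ is absolutely continuous on any compact subinterval $[p,t] \subset [\tfrac{1}{2},1)$ thanks to Lemma~\ref{FTC lemma}. Since $Q_f$ is bounded and Lipschitz on $[p,t]$, the composition $Q_f^\gamma$ is also absolutely continuous there (for $\gamma \geq 1$, the map $y \mapsto y^\gamma$ is locally Lipschitz on the bounded range of $Q_f$). Then the fundamental theorem of calculus gives
\begin{align*}
    Q_f(x)^\gamma - Q_f(p)^\gamma
    = \int_p^x \frac{d}{ds} Q_f^\gamma(s)\,ds
    \leq \int_p^x \frac{d}{ds} G_\gamma(s)\,ds
    = G_\gamma(x) - G_\gamma(p),
\end{align*}
where the middle inequality is exactly Lemma~\ref{boundary function lemma}. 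Integrating in $x$ over $[p,t]$ and sending $t \to 1^-$ via monotone convergence (both integrands are nonnegative, since $Q_f$ and $G_\gamma$ are increasing on $[\tfrac12,1)$ and $Q_f(p) \geq Q_f(\tfrac12) = 0$) yields the claimed integral inequality.

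For the closed-form evaluation, the substitution $u = 1-x$ reduces the computation to $\int_0^{1-p} u^{-\gamma(1-\alpha)/\alpha}\,du$, which converges precisely because the assumption $\gamma < \alpha/(1-\alpha)$ forces the exponent to satisfy $\gamma(1-\alpha)/\alpha < 1$. Direct integration gives $\int_p^1 G_\gamma\,dx = \mathcal{I}(\nu,\alpha)^{\gamma/\alpha} (\alpha/(1-\alpha))^\gamma \cdot \tfrac{\alpha}{\alpha+\gamma\alpha-\gamma}(1-p)^{(\alpha+\gamma\alpha-\gamma)/\alpha}$, while $G_\gamma(p)(1-p)$ equals the same prefactor times $(1-p)^{(\alpha+\gamma\alpha-\gamma)/\alpha}$ without the fraction $\tfrac{\alpha}{\alpha+\gamma\alpha-\gamma}$; subtracting and simplifying $\tfrac{\alpha}{\alpha+\gamma\alpha-\gamma} - 1 = \tfrac{\gamma(1-\alpha)}{\alpha+\gamma\alpha-\gamma}$ yields exactly the stated constant $c_2(\alpha,\gamma,\mathcal{I}(\nu,\alpha))$.

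The only genuine obstacle is the regularity bookkeeping at the endpoint $x = 1$: Lemma~\ref{FTC lemma} gives uniform Lipschitz control only on proper subintervals, so the FTC step must be carried out on $[p,t]$ with $t < 1$ and then extended by a limit. I would handle this by the monotone convergence argument above, which is clean because positivity of both sides on $[\tfrac12, 1)$ is automatic. Everything else is routine calculus, and no additional analytic machinery beyond Lemmas~\ref{dQ lemma}, \ref{FTC lemma}, and \ref{boundary function lemma} is required.
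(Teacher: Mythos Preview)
Your proposal is correct and follows essentially the same route as the paper: the inequality step is deduced from Lemma~\ref{boundary function lemma} (you spell out the intermediate pointwise bound and the endpoint limiting argument that the paper leaves implicit), and the closed-form evaluation matches the paper's direct computation line by line.
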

\begin{proof}
    The first inequality follows directly from Lemma \ref{boundary function lemma}. The second inequality comes from direct computation. Since $-1<-\frac{\gamma(1-\alpha)}{\alpha}<0$, we have
    \begin{align*}
        \int_{p}^{1} G_{\gamma}(x) - G_{\gamma}(p)dx 
        &= \int_{p}^{1} \mathcal{I}(\nu,\alpha)^{\tfrac{\gamma}{\alpha}}\left(\frac{\alpha}{1-\alpha}\right)^{\gamma}(1-x)^{-\frac{\gamma(1-\alpha)}{\alpha}} dx -\mathcal{I}(\nu,\alpha)^{\tfrac{\gamma}{\alpha}}\left(\frac{\alpha}{1-\alpha}\right)^{\gamma}(1-p)^{-\frac{\gamma(1-\alpha)}{\alpha}+1}\\
        &= \mathcal{I}(\nu,\alpha)^{\tfrac{\gamma}{\alpha}}\left(\frac{\alpha}{1-\alpha}\right)^{\gamma} (1-p)^{-\frac{\gamma(1-\alpha)}{\alpha}+1} \cdot \left(\frac{1}{-\frac{\gamma(1-\alpha)}{\alpha}+1}-1 \right)\\
        &= \mathcal{I}(\nu,\alpha)^{\tfrac{\gamma}{\alpha}}\left(\frac{\alpha}{1-\alpha}\right)^{\gamma} (1-p)^{\frac{\alpha+\gamma\alpha-\gamma}{\alpha}} \cdot \left(\frac{\gamma(1-\alpha)}{\alpha+\gamma\alpha-\gamma}\right).
    \end{align*}
\end{proof}

We then give the following technical definition. It intuitively means the point that achieves half of the mass. Its importance will be shown in the proof of our main technical Lemma \ref{main L^2 lemma}.

\begin{definition} \label{half-mass point definition}
    Given an increasing continuous function $G: [\frac{1}{2},1) \to \mathbb{R}_+$ such that $G(\frac{1}{2}) = 0$ and
    \begin{align}
        \int_{\frac{1}{2}}^1 G(x) dx  = M<\infty.
    \end{align}
    define $p(G) \in [\frac{1}{2},1)$ be the smallest number such that
    \begin{align}
        \int_{\frac{1}{2}}^{1} \min\{G(x),G(p(G))\}dx=\int_{p(G)}^1 G(x)-G(p(G)) dx = \frac{M}{2}.
    \end{align}
\end{definition}

Note that $p(G)$ always exists and is unique since $G$ is continuous and increasing. With Lemma \ref{Q_f^gamma tail bound lemma}, we can give an upper bound for $p(Q_f^{\gamma})$.

\begin{lemma}\label{half mass bound lemma}
    Suppose $\mathcal{I}(\nu,\alpha)<\infty$ for some $\frac{1}{2}<\alpha<1$ and $f$ is 1-Lipschitz with $m_{\nu}(f) = 0$. Then we have for any $1\leq \gamma < \frac{\alpha}{1-\alpha}$,
    \begin{align}
        \int_{\frac{1}{2}}^1 Q_f(x)^{\gamma}dx=M<\infty.
    \end{align}
    Moreover, we have
    \begin{align}
        1-p(Q_f^{\gamma}) \geq c_3(\alpha,\gamma,\mathcal{I}(\nu,\alpha))M^{\frac{\alpha}{\alpha+\gamma\alpha-\gamma}}.,
    \end{align}
    where
    \begin{align}
        c_3(\alpha,\gamma,\mathcal{I}(\nu,\alpha)) 
        &= [2c_2(\alpha,\gamma,\mathcal{I}(\nu,\alpha))]^{-\frac{\alpha}{\alpha+\gamma\alpha-\gamma}}\\
        &= \left[2\mathcal{I}(\nu,\alpha)^{\tfrac{\gamma}{\alpha}}\left(\frac{\alpha}{1-\alpha}\right)^{\gamma} \left(\frac{\gamma(1-\alpha)}{\alpha+\gamma\alpha-\gamma}\right) \right]^{-\frac{\alpha}{\alpha+\gamma\alpha-\gamma}}.
    \end{align}
\end{lemma}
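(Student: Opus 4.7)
The plan is to deduce both claims of Lemma \ref{half mass bound lemma} as essentially immediate consequences of the tail estimate already established in Lemma \ref{Q_f^gamma tail bound lemma}. The only nontrivial content lies in verifying that Definition \ref{half-mass point definition} applies to $G = Q_f^\gamma$, and then inverting a single inequality to extract a lower bound on $1-p(Q_f^\gamma)$.

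First I would settle the finiteness of $M$. Since $m_\nu(f)=0$ we have $Q_f(\tfrac{1}{2})=0$, and $Q_f^\gamma(\tfrac{1}{2})=0$ as well. Applying Lemma \ref{Q_f^gamma tail bound lemma} with $p=\tfrac{1}{2}$ gives
\begin{align*}
    M \;=\; \int_{1/2}^1 Q_f(x)^\gamma \, dx \;\leq\; c_2(\alpha,\gamma,\mathcal{I}(\nu,\alpha)) \cdot \left(\tfrac{1}{2}\right)^{\frac{\alpha+\gamma\alpha-\gamma}{\alpha}} \;<\; \infty,
\end{align*}
where positivity of the exponent $(\alpha+\gamma\alpha-\gamma)/\alpha$ uses precisely the hypothesis $\gamma<\alpha/(1-\alpha)$.

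Next I would check that Definition \ref{half-mass point definition} is applicable to $G := Q_f^\gamma$ on $[\tfrac{1}{2},1)$. By Lemma \ref{FTC lemma}, $Q_f$ is (locally) Lipschitz on $(0,1)$ and therefore continuous; since $Q_f$ is increasing and $\gamma\geq 1$, $Q_f^\gamma$ is continuous and increasing with $Q_f^\gamma(\tfrac{1}{2})=0$. Combined with $\int_{1/2}^1 G = M<\infty$ from the previous step, this shows $p:=p(Q_f^\gamma)\in[\tfrac{1}{2},1)$ is well defined and unique.

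Finally, the defining equation for $p$ reads
\begin{align*}
    \tfrac{M}{2} \;=\; \int_{p}^{1}\bigl(Q_f(x)^\gamma - Q_f(p)^\gamma\bigr)\,dx.
\end{align*}
Applying Lemma \ref{Q_f^gamma tail bound lemma} with this choice of $p$ bounds the right-hand side by $c_2(\alpha,\gamma,\mathcal{I}(\nu,\alpha))\cdot(1-p)^{(\alpha+\gamma\alpha-\gamma)/\alpha}$. Rearranging yields
\begin{align*}
    1-p \;\geq\; \Bigl(\tfrac{M}{2c_2(\alpha,\gamma,\mathcal{I}(\nu,\alpha))}\Bigr)^{\frac{\alpha}{\alpha+\gamma\alpha-\gamma}} \;=\; c_3(\alpha,\gamma,\mathcal{I}(\nu,\alpha))\cdot M^{\frac{\alpha}{\alpha+\gamma\alpha-\gamma}},
\end{align*}
with $c_3=[2c_2]^{-\alpha/(\alpha+\gamma\alpha-\gamma)}$, matching the stated constant. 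There is no serious obstacle here: the hard analytic work (bounding $\frac{d}{dx}Q_f^\gamma$ by a power of $1-x$ and integrating it) is packaged in Lemmas \ref{boundary function lemma} and \ref{Q_f^gamma tail bound lemma}. The only subtlety worth double-checking is that the exponent $(\alpha+\gamma\alpha-\gamma)/\alpha$ is strictly positive so that the inversion step is legitimate, which again reduces to the assumption $\gamma<\alpha/(1-\alpha)$.
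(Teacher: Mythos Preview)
Your proposal is correct and follows essentially the same route as the paper: both invoke Lemma~\ref{Q_f^gamma tail bound lemma} at $p=p(Q_f^\gamma)$ and invert the resulting inequality, using positivity of $(\alpha+\gamma\alpha-\gamma)/\alpha$. The only cosmetic differences are that the paper cites Lemma~\ref{FTC lemma} plus monotone convergence for the finiteness of $M$ (whereas you obtain it by specializing Lemma~\ref{Q_f^gamma tail bound lemma} to $p=\tfrac{1}{2}$), and that you explicitly verify the hypotheses of Definition~\ref{half-mass point definition}, which the paper leaves implicit.
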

\begin{proof}
    $\int_{\frac{1}{2}}^1 Q_f(x)^{\gamma}dx<\infty$ follows directly from Lemma \ref{FTC lemma} and the monotone convergence theorem. For the second inequality, by Lemma \ref{Q_f^gamma tail bound lemma}, we have 
    \begin{align*}
        \frac{M}{2} = \int_{p(Q_f^{\gamma})}^{1} Q_f(x)^\gamma - Q_f(p(Q_f^{\gamma}))^{\gamma}dx\leq c_2(\alpha,\gamma,\mathcal{I}(\nu,\alpha)) \cdot(1-p(Q_f^{\gamma}))^{\frac{\alpha+\gamma\alpha-\gamma}{\alpha}} .
    \end{align*}
    Since $\frac{\alpha+\gamma\alpha-\gamma}{\alpha}>0$, exchanging the terms we have
    \begin{align*}
        1-p(Q_f^{\gamma}) \geq [2c_2(\alpha,\gamma,\mathcal{I}(\nu,\alpha))]^{-\frac{\alpha}{\alpha+\gamma\alpha-\gamma}} \cdot M^{\frac{\alpha}{\alpha+\gamma\alpha-\gamma}}
    \end{align*}
\end{proof}

Finally, we can state our main lemma to prove Theorem \ref{L^2 theorem} and \ref{L^1 Cheeger theorem}.

\begin{lemma}\label{main L^2 lemma}
    Suppose $\mathcal{I}(\nu,\alpha)<\infty$ for some $\frac{1}{2}<\alpha<1$ and $f$ is 1-Lipschitz with $m_{\nu}(f) = 0$. Then for any $1\leq \gamma <\frac{\alpha}{1-\alpha}$, suppose 
    \begin{align}
        \int_{0}^{\infty}\mathbb{P}_{\nu}(sign(f) \cdot |f^{\gamma}|\geq t)\,dt = M.
    \end{align}
    Then for any $\beta\geq 1$, we have
    \begin{align}
        \int_{0}^{\infty}\mathbb{P}_{\nu}(sign(f) \cdot |f^{\gamma}|\geq t)^{\beta}\,dt \geq c_4(\alpha,\beta,\gamma,\mathcal{I}(\nu,\alpha))M^{\frac{\alpha\beta+\gamma\alpha-\gamma}{\alpha+\gamma\alpha-\gamma}},
    \end{align}
    where
    \begin{align}
        c_4(\alpha,\beta,\gamma,\mathcal{I}(\nu,\alpha)) &= \frac{c_3(\alpha,\gamma,\mathcal{I}(\nu,\alpha))^{\beta-1}}{2}\\
        &= \frac{1}{2}\left[2\mathcal{I}(\nu,\alpha)^{\tfrac{\gamma}{\alpha}}\left(\frac{\alpha}{1-\alpha}\right)^{\gamma} \left(\frac{\gamma(1-\alpha)}{\alpha+\gamma\alpha-\gamma}\right) \right]^{-\frac{\alpha(\beta-1)}{\alpha+\gamma\alpha-\gamma}}.
    \end{align}
    Here $sign(f)=1$ if $f\geq 0$ and $sign(f)=-1$ if $f <0$.
\end{lemma}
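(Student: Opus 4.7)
The plan is to recast the integrals in quantile form, truncate at the half-mass level of $Q_f^\gamma$, and then invoke Lemma~\ref{half mass bound lemma}. The heavy lifting is already packaged in that lemma; what remains is essentially a one-line layer-cake argument combined with the elementary inequality $h(t)^\beta\geq (1-p)^{\beta-1}h(t)$ on the truncation interval.

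First I would translate the tail function $h(t):=\mathbb{P}_\nu(\mathrm{sign}(f)\,|f^\gamma|\geq t)$ into quantile language. Since $m_\nu(f)=0$ gives $Q_f(x)\geq 0$ on $[\tfrac{1}{2},1)$, and since for every $t>0$ the event $\{\mathrm{sign}(f)|f|^\gamma\geq t\}$ coincides with $\{f\geq t^{1/\gamma}\}$, we get
\begin{align*}
h(t) \;=\; \bigl|\{x\in[\tfrac{1}{2},1):\,Q_f(x)^\gamma\geq t\}\bigr|, \qquad t>0.
\end{align*}
By Cavalieri's principle the hypothesis $\int_0^\infty h(t)\,dt=M$ then becomes $\int_{1/2}^1 Q_f(x)^\gamma\,dx=M$, which is exactly the quantity controlled by Lemma~\ref{half mass bound lemma}.

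Next I would set $p:=p(Q_f^\gamma)$ and $a:=Q_f(p)^\gamma$. By Definition~\ref{half-mass point definition} and a second application of Cavalieri,
\begin{align*}
\int_0^a h(t)\,dt \;=\; \int_{1/2}^1 \min\{Q_f(x)^\gamma,\,a\}\,dx \;=\; \frac{M}{2}, \qquad h(a) \;=\; 1-p.
\end{align*}
Because $h$ is non-increasing, $h(t)\geq 1-p$ for every $t\in[0,a]$; since $\beta\geq 1$ this upgrades to the pointwise bound $h(t)^\beta\geq (1-p)^{\beta-1}h(t)$ on $[0,a]$. Integrating and discarding the $[a,\infty)$-piece yields
\begin{align*}
\int_0^\infty h(t)^\beta\,dt \;\geq\; (1-p)^{\beta-1}\int_0^a h(t)\,dt \;=\; \frac{(1-p)^{\beta-1}\,M}{2}.
\end{align*}

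Finally I would substitute the estimate $1-p\geq c_3(\alpha,\gamma,\mathcal{I}(\nu,\alpha))\,M^{\alpha/(\alpha+\gamma\alpha-\gamma)}$ from Lemma~\ref{half mass bound lemma} and simplify the exponent via
\begin{align*}
1 + (\beta-1)\cdot\frac{\alpha}{\alpha+\gamma\alpha-\gamma} \;=\; \frac{\alpha\beta+\gamma\alpha-\gamma}{\alpha+\gamma\alpha-\gamma},
\end{align*}
which produces exactly the constant $c_4=c_3^{\beta-1}/2$ and the claimed power of $M$. I do not anticipate a serious obstacle: the only mild subtleties are verifying the quantile identity for $h$ when $f$ may have an atom at $0$ (which is harmless since every identity above is an integral and is insensitive to single-point values of $Q_f$) and checking that $p<1$ so that $a$ is finite, which follows from $M<\infty$ together with Lemma~\ref{half mass bound lemma}.
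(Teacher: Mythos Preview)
Your argument is correct and matches the paper's proof essentially line for line: both truncate at $a=Q_f(p)^\gamma$, use monotonicity of $h$ to get the pointwise bound $h(t)^\beta\geq(1-p)^{\beta-1}h(t)$ on $[0,a]$, integrate to pick up the factor $M/2$, and then substitute the lower bound on $1-p$ from Lemma~\ref{half mass bound lemma}. Your quantile/Cavalieri translation is spelled out a bit more explicitly than in the paper, but the underlying mechanism is identical.
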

\begin{proof}
    Note that by Definition \ref{half-mass point definition},
    \begin{align*}
        \int_{0}^{Q_f^{\gamma}(p(Q_f^\gamma))}\mathbb{P}_{\nu}(sign(f) \cdot |f^{\gamma}|\geq t)\,dt = \frac{M}{2}.
    \end{align*}
    Therefore, together with Lemma \ref{half mass bound lemma} we have
    \begin{align*}
        \int_{0}^{\infty}\mathbb{P}_{\nu}(sign(f)\cdot |f^{\gamma}|\geq t)^{\beta}\,dt 
        &\geq \int_{0}^{Q_f^{\gamma}(p(Q_f^\gamma))}\mathbb{P}_{\nu}(sign(f) \cdot|f^{\gamma}|\geq t)^{\beta}\,dt\\
        &\geq \int_{0}^{Q_f^{\gamma}(p(Q_f^\gamma))}\mathbb{P}_{\nu}(sign(f)\cdot |f^{\gamma}|\geq t)\,dt \cdot (1-p(Q_f^{\gamma}))^{\beta-1}\\
        &\geq  \frac{M}{2} \cdot \left(c_3(\alpha,\gamma,\mathcal{I}(\nu,\alpha))M^{\frac{\alpha}{\alpha+\gamma\alpha-\gamma}}\right)^{\beta-1}\\
        &\geq \frac{c_3(\alpha,\gamma,\mathcal{I}(\nu,\alpha))^{\beta-1}}{2}M^{\frac{\alpha\beta+\gamma\alpha-\gamma}{\alpha+\gamma\alpha-\gamma}}
    \end{align*}
    For the second inequality, we use the property that $\mathbb{P}_{\nu}(sign(f)\cdot |f^{\gamma}|\geq t)$ is a decreasing function with respect to $t$ and 
    \begin{align*}
        \mathbb{P}_{\nu}(sign(f)\cdot |f^{\gamma}|\geq Q_f^{\gamma}(p(Q_f^{\gamma}))) = 1-p(Q_f^{\gamma})
    \end{align*}
\end{proof}

\section{Proof of Theorems \ref{L^1 theorem}, \ref{L^2 theorem}, and \ref{L^1 Cheeger theorem}}
In this section, we prove Theorem \ref{L^1 theorem}, \ref{L^2 theorem}, and \ref{L^1 Cheeger theorem} following the strategy presented in \cite[Section 2.1]{klartag2024isoperimetric}. However, a key obstacle arises during the proof, and this is precisely where Lemma \ref{truncation lemma} and \ref{main L^2 lemma} play a crucial role.

\subsection{Proof of Theorem \ref{L^1 theorem}}
\begin{proof}
    Without loss of generality, we assume that $m_{\nu}(f) = 0$. Recall the level set $A(f,t)$ defined in \eqref{upper level set}. We begin with the co-area formula \cite[Lemma 3.2]{bobkov1997isoperimetric}:
    \begin{align*}
        \mathbb{E}_{\nu}[|\nabla f|] &\geq \int_{\mathbb{R}} \nu(A(f,t)^+) \,dt\\
        &\geq \frac{1}{\mathcal{I}(\nu,\alpha)}\int_{\mathbb{R}} \min\{\nu(A(f,t)),1-\nu(A(f,t))\}^{\frac{1}{\alpha}} \,dt\\
        &= \frac{1}{\mathcal{I}(\nu,\alpha)}\left(\int_{-\infty}^{0} \mathbb{P}_{\nu}(f\leq t)^{\frac{1}{\alpha}}\,dt + \int_{0}^{\infty}\mathbb{P}_{\nu}(f\geq t)^{\frac{1}{\alpha}}\,dt\right)\\
        &\geq \frac{1}{\mathcal{I}(\nu,\alpha)}\left(\int_{-\frac{1}{2}}^{0} \mathbb{P}_{\nu}(f\leq t)^{\frac{1}{\alpha}}\,dt + \int_{0}^{\frac{1}{2}}\mathbb{P}_{\nu}(f\geq t)^{\frac{1}{\alpha}}\,dt\right)
    \end{align*}
    The second inequality follows from the definition of the $\alpha$-Cheeger constant (see Definition \eqref{alpha Cheeger constant}).  
    We emphasize the last inequality: restricting the domain of integration from $\mathbb{R}$ to $[-\frac{1}{2},\frac{1}{2}]$ leads to a considerable loss, and this is reason we develop Lemma \ref{main L^2 lemma} to avoid this loss in special cases. Nevertheless, this restriction enables the application of Jensen’s inequality over the uniform measure on $[-\frac{1}{2},\frac{1}{2}]$, which allows us to extract the exponent $\frac{1}{\alpha}$ from the integral:
    \begin{align*}
        \int_{-\frac{1}{2}}^{0} \mathbb{P}_{\nu}(f\leq t)^{\frac{1}{\alpha}}\,dt + \int_{0}^{\frac{1}{2}}\mathbb{P}_{\nu}(f\geq t)^{\frac{1}{\alpha}}\,dt 
        &\geq \left(\int_{-\frac{1}{2}}^{0} \mathbb{P}_{\nu}(f\leq t)\,dt + \int_{0}^{\frac{1}{2}}\mathbb{P}_{\nu}(f\geq t)\,dt\right)^{\frac{1}{\alpha}}\\
        &= \mathbb{E}_{\nu}[|f_{[-\frac{1}{2},\frac{1}{2}]}|]^{\frac{1}{\alpha}}.
    \end{align*}

    Now Lemma \ref{truncation lemma} reveals its significance. This is precisely the reason why we define the truncation $f_{[-\frac{1}{2},\frac{1}{2}]}$ in this particular way, rather than using other possible truncations. By Lemma \ref{truncation lemma}, we have
    \begin{align*}
        \mathbb{E}_{\nu}[|f_{[-\frac{1}{2},\frac{1}{2}]}|] 
        \geq c_1(\alpha,\beta,1,\mathcal{I}(\nu,\alpha))^{-\frac{\alpha}{\alpha-\beta(1-\alpha)}} 
        \, \mathbb{E}_{\nu}[|f|^\beta]^{\frac{\alpha}{\alpha-\beta(1-\alpha)}}.
    \end{align*}
    Using this estimate in the previous inequality gives
    \begin{align*}
        \mathbb{E}_{\nu}[|\nabla f|] 
        &\geq \frac{1}{\mathcal{I}(\nu,\alpha)} 
        \, \mathbb{E}_{\nu}[|f_{[-\frac{1}{2},\frac{1}{2}]}|]^{\frac{1}{\alpha}}\\
        &\geq \frac{1}{\mathcal{I}(\nu,\alpha)} 
        \, c_1(\alpha,\beta,1,\mathcal{I}(\nu,\alpha))^{-\frac{1}{\alpha-\beta(1-\alpha)}}
        \, \mathbb{E}_{\nu}[|f|^\beta]^{\frac{1}{\alpha-\beta(1-\alpha)}}.
    \end{align*}
    Equivalently, we can express this inequality as
    \begin{align*}
        \mathbb{E}_{\nu}[|f|^\beta] 
        \leq \mathcal{I}(\nu,\alpha)^{\alpha-\beta(1-\alpha)} 
        \, c_1(\alpha,\beta,1,\mathcal{I}(\nu,\alpha)) 
        \, \mathbb{E}_{\nu}[|\nabla f|]^{\alpha-\beta(1-\alpha)}.
    \end{align*}
    Therefore, we have
    \begin{align*}
        C_1(\alpha,\beta,\mathcal{I}(\nu,\alpha)) 
        &= \mathcal{I}(\nu,\alpha)^{\alpha-\beta(1-\alpha)} 
        \, c_1(\alpha,\beta,1,\mathcal{I}(\nu,\alpha))\\
        &= \mathcal{I}(\nu,\alpha)^{\alpha-\beta(1-\alpha)} 
        \left[1+2\mathcal{I}(\nu,\alpha)^{\frac{\beta}{\alpha}} \cdot \left(\frac{\alpha}{1-\alpha}\right)^{\beta}\cdot \left(\frac{\alpha}{\alpha-\beta(1-\alpha)}\right)\right].
    \end{align*}

\end{proof}

\subsection{Proof of Theorem \ref{L^2 theorem}}
\begin{proof}
    Without loss of generality, we assume that $m_{\nu}(f) = 0$.  
We begin by applying the Cauchy--Schwarz inequality:
\begin{align}\label{Cauchy–Schwarz bound}
    \mathbb{E}_{\nu}[|\nabla (f^2)|]
    = \mathbb{E}_{\nu}[2|f||\nabla f|]
    \leq 2\sqrt{\mathbb{E}_{\nu}[f^2]\mathbb{E}_{\nu}[|\nabla f|^2]}.
\end{align}
On the other hand, the co-area formula\cite[Lemma 3.2]{bobkov1997isoperimetric} implies that
\begin{align*}
    \mathbb{E}_{\nu}[|\nabla (f^2)|]
    &\geq \int_{0}^{\infty} \nu(A(f^2,t)^+)\,dt\\
    &= \int_{\mathbb{R}} \nu(A(sign(f)\cdot  f^2,t)^+)\,dt\\
    &\geq \frac{1}{\mathcal{I}(\nu,\alpha)}\int_{\mathbb{R}}
    \min\{\nu(A(sign(f)\cdot f^2,t)),1-\nu(A(sign(f)\cdot f^2,t))\}^{1/\alpha}\,dt.
\end{align*}
For the second equality, we use the fact that $f$ is continuous and therefore for $t>0$,
\begin{align*}
    \nu(A(f^2,t)^+) = \nu(A(sign(f)\cdot  f^2,t)^+) + \nu(A(sign(f)\cdot  f^2,-t)^+).
\end{align*}
Note that since $m_{\nu}(f) = 0$, when $t\leq0$, we have $1-\nu(A(sign(f)\cdot f^2,t)) \leq \frac{1}{2} \leq  \nu(A(sign(f)\cdot f^2,t))$. Therefore, when $t\leq 0$.
\begin{align*}
    \min\{\nu(A(sign(f)\cdot f^2,t)),1-\nu(A(sign(f)\cdot f^2,t))\} = 1-\nu(A(sign(f)\cdot f^2,t)) = \mathbb{P}_{\nu}(sign(f)\cdot f^2\leq t).
\end{align*}
Similarly, when $t\geq 0$, we have
\begin{align*}
    \min\{\nu(A(sign(f)\cdot f^2,t)),1-\nu(A(sign(f)\cdot f^2,t))\} = \nu(A(sign(f)\cdot f^2,t)) = \mathbb{P}_{\nu}(sign(f)\cdot f^2\geq t).
\end{align*}
Therefore, we have
\begin{align*}
    \mathbb{E}_{\nu}[|\nabla (f^2)|]
    &\geq \frac{1}{\mathcal{I}(\nu,\alpha)}
    \left(\int_{-\infty}^{0}\mathbb{P}_{\nu}(sign(f)\cdot f^2\le t)^{1/\alpha}\,dt+\int_{0}^{\infty}\mathbb{P}_{\nu}(sign(f)\cdot f^2\ge t)^{1/\alpha}\,dt\right)\\
    &\geq \frac{1}{\mathcal{I}(\nu,\alpha)}\left(\int_{0}^{\infty}\mathbb{P}_{\nu}((f_-)^2\ge t)^{1/\alpha}\,dt+ \int_{0}^{\infty}\mathbb{P}_{\nu}((f_+)^2\ge t)^{1/\alpha}\,dt\right),
\end{align*}
where we define $f_+ := \max\{0,f\}$ and $f_- := \min\{0,f\}$.  
Instead of truncating the integral domain to $[-\tfrac{1}{2},\tfrac{1}{2}]$, we invoke Lemma~\ref{main L^2 lemma} with $\beta = 1/\alpha$ and $\gamma = 2$. This gives
\begin{align*}
    \int_{0}^{\infty}\mathbb{P}_{\nu}((f_+)^2\ge t)^{1/\alpha}\,dt
    \geq c_4\!\left(\alpha,\tfrac{1}{\alpha},2,\mathcal{I}(\nu,\alpha)\right)
    \mathbb{E}_{\nu}[(f_+)^2]^{\frac{2\alpha-1}{3\alpha-2}}.
\end{align*}
Applying the same argument to $-f$ gives an identical inequality with $f_-$ in place of $f_+$:
\begin{align*}
    \int_{0}^{\infty}\mathbb{P}_{\nu}((f_-)^2\ge t)^{1/\alpha}\,dt
    \geq c_4\!\left(\alpha,\tfrac{1}{\alpha},2,\mathcal{I}(\nu,\alpha)\right)
    \mathbb{E}_{\nu}[(f_-)^2]^{\frac{2\alpha-1}{3\alpha-2}}.
\end{align*}

Bringing them back to the inequality, we get
\begin{align*}
    \mathbb{E}_{\nu}[|\nabla (f^2)|]
    &\geq \frac{1}{\mathcal{I}(\nu,\alpha)}\,
    c_4\!\left(\alpha,\tfrac{1}{\alpha},2,\mathcal{I}(\nu,\alpha)\right)
    \left(\mathbb{E}_{\nu}[(f_-)^2]^{\frac{2\alpha-1}{3\alpha-2}}+\mathbb{E}_{\nu}[(f_+)^2]^{\frac{2\alpha-1}{3\alpha-2}}\right)\\
    &\geq \frac{1}{\mathcal{I}(\nu,\alpha)}\,
    c_4\!\left(\alpha,\tfrac{1}{\alpha},2,\mathcal{I}(\nu,\alpha)\right)
    \left(\frac{1}{2}\mathbb{E}_{\nu}[f^2]\right)^{\frac{2\alpha-1}{3\alpha-2}}.
\end{align*}
For the last inequality, we use $\frac{1}{2}\mathbb{E}_{\nu}[f^2] \leq \max\{\mathbb{E}_{\nu}[(f_-)^2],\mathbb{E}_{\nu}[(f_+)^2]\}$. Combining this with the Cauchy--Schwarz estimate~\eqref{Cauchy–Schwarz bound} gives
\begin{align*}
    \frac{1}{\mathcal{I}(\nu,\alpha)}\,
    c_4\!\left(\alpha,\tfrac{1}{\alpha},2,\mathcal{I}(\nu,\alpha)\right)
    \left(\frac{1}{2}\mathbb{E}_{\nu}[f^2]\right)^{\frac{2\alpha-1}{3\alpha-2}}
    \le 2\sqrt{\mathbb{E}_{\nu}[f^2]\mathbb{E}_{\nu}[|\nabla f|^2]}.
\end{align*}
Equivalently,
\begin{align*}
    \mathbb{E}_{\nu}[f^2]^{\frac{2\alpha-1}{3\alpha-2}-\frac{1}{2}}
    \le 2^{\frac{5\alpha-3}{3\alpha-2}}\mathcal{I}(\nu,\alpha)\,
    c_4\!\left(\alpha,\tfrac{1}{\alpha},2,\mathcal{I}(\nu,\alpha)\right)^{-1}
    \mathbb{E}_{\nu}[|\nabla f|^2]^{1/2}.
\end{align*}
Since $\tfrac{2\alpha-1}{3\alpha-2}-\tfrac{1}{2} = \tfrac{\alpha}{6\alpha-4}>0$, we finally have
\begin{align*}
    \mathbb{E}_{\nu}[f^2]
    \le 2^{\frac{10\alpha-6}{\alpha}}\mathcal{I}(\nu,\alpha)^{\frac{6\alpha-4}{\alpha}}
    c_4\!\left(\alpha,\tfrac{1}{\alpha},2,\mathcal{I}(\nu,\alpha)\right)^{-\frac{6\alpha-4}{\alpha}}
    \mathbb{E}_{\nu}[|\nabla f|^2]^{\frac{3\alpha-2}{\alpha}}.
\end{align*}
Hence,
\begin{align*}
    C_2(\alpha,\mathcal{I}(\nu,\alpha))
    &=  2^{\frac{10\alpha-6}{\alpha}}\mathcal{I}(\nu,\alpha)^{\frac{6\alpha-4}{\alpha}}
    c_4\!\left(\alpha,\tfrac{1}{\alpha},2,\mathcal{I}(\nu,\alpha)\right)^{-\frac{6\alpha-4}{\alpha}}\\
    &=  2^{\frac{16\alpha-10}{\alpha}}\mathcal{I}(\nu,\alpha)^{\frac{6\alpha-4}{\alpha}}
    \!\left[2\mathcal{I}(\nu,\alpha)^{\tfrac{2}{\alpha}}
    \!\left(\tfrac{\alpha}{1-\alpha}\right)^2
    \!\left(\tfrac{2(1-\alpha)}{3\alpha-2}\right)\!\right]^{\!\tfrac{2(1-\alpha)}{\alpha}}\!.
\end{align*}

\end{proof}

\subsection{Proof of Theorem \ref{L^1 Cheeger theorem}}
\begin{proof}
    Without loss of generality, we assume that $m_{\nu}(f)=0$. By co-area formula\cite[Lemma 3.2]{bobkov1997isoperimetric} as in the proof of Theorem \ref{L^1 theorem}, we have
    \begin{align*}
        \mathbb{E}_{\nu}[|\nabla f|] 
        &\geq \frac{1}{\mathcal{I}(\nu,\alpha)}\left(\int_{-\infty}^{0} \mathbb{P}_{\nu}(f\leq t)^{\frac{1}{\alpha}}\,dt + \int_{0}^{\infty}\mathbb{P}_{\nu}(f\geq t)^{\frac{1}{\alpha}}\,dt\right)
    \end{align*}
    By Lemma \ref{main L^2 lemma} with $\beta=\frac{1}{\alpha}$ and $\gamma=1$, we have
    \begin{align*}
        \int_{0}^{\infty}\mathbb{P}_{\nu}(f\geq t)^{\frac{1}{\alpha}}\,dt \geq c_4\left(\alpha,\frac{1}{\alpha},1,\mathcal{I}(\nu,\alpha)\right)\mathbb{E}_{\nu}[|f_+|]^{\frac{\alpha}{2\alpha-1}}
    \end{align*}
    and
    \begin{align*}
        \int_{-\infty}^{0}\mathbb{P}_{\nu}(f\leq t)^{\frac{1}{\alpha}}\,dt \geq c_4\left(\alpha,\frac{1}{\alpha},1,\mathcal{I}(\nu,\alpha)\right)\mathbb{E}_{\nu}[|f_-|]^{\frac{\alpha}{2\alpha-1}}.
    \end{align*}
    Therefore, we have
    \begin{align*}
        \mathbb{E}_{\nu}[|\nabla f|] &\geq \frac{1}{\mathcal{I}(\nu,\alpha)}c_4\left(\alpha,\frac{1}{\alpha},1,\mathcal{I}(\nu,\alpha)\right)(\mathbb{E}_{\nu}[|f_-|]^{\frac{\alpha}{2\alpha-1}}+ \mathbb{E}_{\nu}[|f_+|]^{\frac{\alpha}{2\alpha-1}})\\
        &\geq \frac{1}{\mathcal{I}(\nu,\alpha)}c_4\left(\alpha,\frac{1}{\alpha},1,\mathcal{I}(\nu,\alpha)\right)\left(\frac{1}{2}\mathbb{E}_{\nu}[|f|]\right)^{\frac{\alpha}{2\alpha-1}}.
    \end{align*}
    Again, we used $\frac{1}{2}\mathbb{E}_{\nu}[|f|] \leq \max\{\mathbb{E}_{\nu}[|f_-|],\mathbb{E}_{\nu}[|f_+|]\}$ in the last inequality. Exchanging the terms we get
    \begin{align*}
        \mathbb{E}_{\nu}[|f|] \leq 2\mathcal{I}(\nu,\alpha)^{\frac{2\alpha-1}{\alpha}}c_4\left(\alpha,\frac{1}{\alpha},1,\mathcal{I}(\nu,\alpha)\right)^{-\frac{2\alpha-1}{\alpha}}\mathbb{E}_{\nu}[|\nabla f|]^{\frac{2\alpha-1}{\alpha}}.
    \end{align*}
    Therefore, we have
    \begin{align*}
        C_3(\alpha,\mathcal{I}(\nu,\alpha)) &= 2\mathcal{I}(\nu,\alpha)^{\frac{2\alpha-1}{\alpha}}c_4\left(\alpha,\frac{1}{\alpha},1,\mathcal{I}(\nu,\alpha)\right)^{-\frac{2\alpha-1}{\alpha}}\\
        &= 2(2\mathcal{I}(\nu,\alpha))^{\frac{2\alpha-1}{\alpha}}\left[2\mathcal{I}(\nu,\alpha)^{\tfrac{1}{\alpha}}\left(\frac{\alpha}{2\alpha-1}\right) \right]^{\frac{1-\alpha}{\alpha}}.
    \end{align*}
\end{proof}

\section{Proof of Theorems \ref{product theorem} and Theorem \ref{Pareto theorem}}
In this section, we prove Theorem \ref{product theorem} and Theorem \ref{Pareto theorem} as a consequence of Theorem \ref{L^2 theorem}.
\subsection{Proof of Theorem \ref{product theorem}}
We begin by establishing a variance bound for $1$-Lipschitz functions on product measures in terms of their partial derivatives, via induction.

\begin{lemma}\label{product lemma}
    Suppose $\mathcal{C}_2(\mu,\alpha)<\infty$ on $\mathbb{R}$ for some $0<\alpha\leq1$, then for any 1-Lipschitz $f$ on $\mathbb{R}^n$, we have
    \begin{align}
        \operatorname{Var}_{\mu^n}(f) \leq \mathcal{C}_2(\nu,\alpha)\sum_{i=1}^{n} \mathbb{E}_{\mu^n}[|d_if|^2]^\alpha.
    \end{align}
\end{lemma}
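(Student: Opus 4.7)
The plan is to proceed by induction on $n$, using the law of total variance to peel off one coordinate at a time. The base case $n=1$ is immediate from the definition of $\mathcal{C}_2(\mu,\alpha)$, since $\operatorname{Var}_\mu(f) \leq \mathbb{E}_\mu[|f-m_\mu(f)|^2] \leq \mathcal{C}_2(\mu,\alpha)\mathbb{E}_\mu[|\nabla f|^2]^\alpha$ for any $1$-Lipschitz $f$ on $\mathbb{R}$. For the inductive step, write $x=(x',x_n)$ with $x' \in \mathbb{R}^{n-1}$ and decompose
\begin{align*}
    \operatorname{Var}_{\mu^n}(f) = \mathbb{E}_{\mu^{n-1}}\!\left[\operatorname{Var}_{\mu}\!\big(f(x',\cdot)\big)\right] + \operatorname{Var}_{\mu^{n-1}}\!\big(g(x')\big),
\end{align*}
where $g(x') := \mathbb{E}_{\mu}[f(x',\cdot)]$.

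For the first term, I would observe that for each fixed $x'$, the slice $f(x',\cdot)$ is $1$-Lipschitz on $\mathbb{R}$, so $\mathcal{C}_2(\mu,\alpha)$ applies and gives $\operatorname{Var}_\mu(f(x',\cdot)) \leq \mathcal{C}_2(\mu,\alpha)\mathbb{E}_\mu[|d_n f|^2]^\alpha$. Then taking expectation over $x'$ and using Jensen's inequality for the concave map $t \mapsto t^\alpha$ (valid since $0<\alpha \leq 1$) lets me push the outer expectation inside the power, yielding
\begin{align*}
    \mathbb{E}_{\mu^{n-1}}\!\left[\operatorname{Var}_\mu(f(x',\cdot))\right]
    \leq \mathcal{C}_2(\mu,\alpha)\,\mathbb{E}_{\mu^n}[|d_n f|^2]^\alpha.
\end{align*}

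For the second term, I need $g$ to be $1$-Lipschitz on $\mathbb{R}^{n-1}$, which follows from the triangle inequality and the $1$-Lipschitz property of $f$: $|g(x')-g(y')| \leq \mathbb{E}_\mu[|f(x',\cdot)-f(y',\cdot)|] \leq |x'-y'|_2$. Then the inductive hypothesis gives $\operatorname{Var}_{\mu^{n-1}}(g) \leq \mathcal{C}_2(\mu,\alpha)\sum_{i=1}^{n-1}\mathbb{E}_{\mu^{n-1}}[|d_i g|^2]^\alpha$, and to connect $d_i g$ back to $d_i f$ I would use that differentiating under the expectation (or, more carefully with the $\limsup$ definition, passing the $\limsup$ through the integral via Fatou) gives $|d_i g(x')| \leq \mathbb{E}_\mu[|d_i f(x',\cdot)|]$. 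Then by Jensen's inequality (this time for the convex function $t \mapsto t^2$) followed by another application of Jensen for $t \mapsto t^\alpha$, I obtain
\begin{align*}
    \mathbb{E}_{\mu^{n-1}}[|d_i g|^2]^\alpha
    \leq \mathbb{E}_{\mu^{n-1}}\!\left[\mathbb{E}_\mu[|d_i f|^2]\right]^\alpha
    = \mathbb{E}_{\mu^n}[|d_i f|^2]^\alpha.
\end{align*}

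Summing the two bounds yields the desired inequality. The main technical delicacy is the step controlling $d_i g$ by $\mathbb{E}_\mu[d_i f]$: because $d_i f$ is defined via $\limsup$ rather than a pointwise derivative, one must justify the exchange of limit and integral. Otherwise the argument is a routine tensorization combined with the correct use of Jensen's inequality, taking care that the concavity of $t^\alpha$ goes in the direction needed to absorb the outer expectation inside the $\alpha$-power.
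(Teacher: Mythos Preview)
Your proposal is correct and follows essentially the same approach as the paper: induction on $n$, the law of total variance, the $1$-Lipschitz property of slices and marginals, and Jensen's inequality for the concave map $t\mapsto t^\alpha$. The only cosmetic difference is that you condition on $x'=(x_1,\dots,x_{n-1})$ and peel off the last coordinate, whereas the paper conditions on $x_n$ and applies the inductive hypothesis to the first $n-1$ coordinates; the two decompositions are symmetric and the Jensen steps land in the same places. The paper handles the $d_i g$ versus $\mathbb{E}_\mu[d_i f]$ exchange you flag by invoking dominated convergence (the difference quotients are bounded by $1$ since $f$ is $1$-Lipschitz), so your identified delicacy is exactly the one the paper addresses.
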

\begin{proof}
    We proceed by induction on $n$.  The base case $n=1$ holds by assumption. Suppose that the inequality holds for $\mu^{n-1}$, i.e.
    \begin{align*}
        \operatorname{Var}_{\mu^{n-1}}(f) \leq \mathcal{C}_2(\nu,\alpha)\sum_{i=1}^{n-1} \mathbb{E}_{\mu^n}[|d_if|^2]^\alpha.
    \end{align*}
    We decompose $\mu^n$ as $\mu^{n-1}\times \mu^*$. Then for a point $x\in\mathbb{R}^n$, we can view it as $(x_1,...,x_{n-1}) \times (x_n)$. Then by law of total variance and applying the assumption to each term yields
    \begin{align*}
        \operatorname{Var}_{\mu^n}(f) &= \operatorname{Var}_{\mu^*}(\mathbb{E}_{\mu^{n-1}}[f]) + \mathbb{E}_{\mu^*}[\operatorname{Var}_{\mu^{n-1}}(f)]\\
        &\leq \mathcal{C}_2(\nu,\alpha)\cdot\mathbb{E}_{\mu^*}[|d_n(\mathbb{E}_{\mu^{n-1}}[f])|^2]^\alpha + \mathcal{C}_2(\nu,\alpha)\cdot \mathbb{E}_{\mu^*}\left[\sum_{i=1}^{n-1}\mathbb{E}_{\mu^{n-1}}[|d_if|^2]^\alpha\right].
    \end{align*}
    For the last inequality, we use the fact that if $f$ is 1-Lipschitz, then $\mathbb{E}_{\mu^{n-1}}[f(x_1,...,x_n)\;|\; x_n]$ is 1-Lipschitz with respect to $x_n$. And given any $x_n \in\mathbb{R}$, we have $f(x_1,...,x_{n-1}\;|\; x_n)$ is 1-Lipschitz with respect to $(x_1,...,x_{n-1})$. Note by Jensen's inequality,
    \begin{align*}
        \mathbb{E}_{\mu^*}\left[\left|d_n(\mathbb{E}_{\mu^{n-1}}[f])\right|^2\right] &= \mathbb{E}_{\mu^*}\left[\left|\mathbb{E}_{\mu^{n-1}}[d_nf]\right|^2\right]\\
        &\leq \mathbb{E}_{\mu^*}\left[\mathbb{E}_{\mu^{n-1}}\left[|d_nf|^2\right]\right]\\
        &= \mathbb{E}_{\mu^n}[|d_nf|^2]
    \end{align*}
    and since $0<\alpha<1$,
    \begin{align*}
        \mathbb{E}_{\mu^*}\left[\sum_{i=1}^{n-1}\mathbb{E}_{\mu^{n-1}}[|d_if|^2]^\alpha\right]
        &= \sum_{i=1}^{n-1}\mathbb{E}_{\mu^*}\left[\mathbb{E}_{\mu^{n-1}}[|d_if|^2]^\alpha\right]\\
        &\leq \sum_{i=1}^{n-1} \mathbb{E}_{\mu^*}\left[\mathbb{E}_{\mu^{n-1}}[|d_if|^2]\right]^{\alpha}\\
        &= \sum_{i=1}^{n-1} \mathbb{E}_{\mu^n}[|d_if|^2]^{\alpha}.
    \end{align*}
    Note that the derivative may be interchanged with expectation by the dominated convergence theorem since $f$ is $1$-Lipschitz. Combining the above estimates, we obtain
    \begin{align*}
        \operatorname{Var}_{\mu^n}(f) &\leq \mathcal{C}_2(\nu,\alpha)\cdot\mathbb{E}_{\mu^n}[|d_nf|^2]^\alpha +  \mathcal{C}_2(\nu,\alpha)\cdot \sum_{i=1}^{n-1} \mathbb{E}_{\mu^n}[|d_if|^2]^{\alpha}\\
        &= \mathcal{C}_2(\nu,\alpha)\cdot \sum_{i=1}^{n} \mathbb{E}_{\mu^n}[|d_if|^2]^{\alpha}.
    \end{align*}
\end{proof}

Now we deduce Theorem \ref{product theorem} by applying Jensen’s inequality once more.

\begin{proof}[Proof of Theorem \ref{product theorem}]
    By Lemma \ref{product lemma}, we have
    \begin{align*}
        \operatorname{Var}_{\mu^n}(f) 
        &\leq \mathcal{C}_2(\nu,\alpha)\sum_{i=1}^{n} \mathbb{E}_{\mu^n}[|d_if|^2]^\alpha \\
        & = \mathcal{C}_2(\nu,\alpha)\cdot n\cdot\sum_{i=1}^{n} \frac{1}{n}\mathbb{E}_{\nu}[|d_if|^2]^\alpha
    \end{align*}
    Since $0 < \alpha < 1$, applying Jensen’s inequality to the uniform measure on $\{1, \ldots, n\}$ gives
    \begin{align*}
        \sum_{i=1}^{n} \frac{1}{n}\mathbb{E}_{\mu^n}[|d_if|^2]^\alpha 
        &\leq \left(\sum_{i=1}^{n} \frac{1}{n}\mathbb{E}_{\mu^n}[|d_if|^2]\right)^{\alpha}\\
        &= n^{-\alpha}\left(\sum_{i=1}^{n}\mathbb{E}_{\mu^n}[|d_i f|^2]\right)^{\alpha}\\
        &= n^{-\alpha}\mathbb{E}_{\mu^n}[|\nabla f|^2]^{\alpha}
    \end{align*}
    Combining the two inequalities yields
    \begin{align*}
        \operatorname{Var}_{\mu^n}(f) \leq \mathcal{C}_2(\nu,\alpha)n^{1-\alpha}\mathbb{E}_{\mu^n}[|\nabla f|^2]^{\alpha}.
    \end{align*}
\end{proof}

\subsection{Proof of Theorem \ref{Pareto theorem}}
We first claim that for every $\lambda > 2$, the Pareto measure $\mu_{\lambda}$ possesses a finite $\alpha$-Cheeger constant $\mathcal{I}(\mu_{\lambda}, \alpha)$ for some $\alpha > \tfrac{1}{2}$.

\begin{lemma}\label{Pareto lemma}
    For any $\lambda>2$, we have
    \begin{align}
        \mathcal{I}\left(\mu_{\lambda}, \frac{\lambda-1}{\lambda}\right) \leq (\lambda-1)^{-\frac{\lambda-1}{\lambda}}
    \end{align}
\end{lemma}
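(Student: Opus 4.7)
The plan is to exploit the one-dimensional structure of $\mu_\lambda$ by showing that the extremum in $\mathcal{I}(\mu_\lambda,\alpha)$ is attained (up to equality) by half-line sets, for which an explicit computation yields the claimed bound. The argument rests on a sharp isoperimetric inequality for $\mu_\lambda$, after which the rest is a one-line algebraic verification.

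First I would establish the isoperimetric estimate for $\mu_\lambda$: for every Borel set $A\subseteq[1,\infty)$,
\begin{align*}
    \mu_\lambda(A^+)\;\geq\;(\lambda-1)\,\min\bigl\{\mu_\lambda(A),\,1-\mu_\lambda(A)\bigr\}^{\lambda/(\lambda-1)}.
\end{align*}
Pushing $A$ forward through the CDF $F(x)=1-x^{-(\lambda-1)}$, set $B:=F(A)\subseteq[0,1]$. Then $|B|=\mu_\lambda(A)$ and, since $f(x)=(\lambda-1)x^{-\lambda}$ and $1-F(x)=x^{-(\lambda-1)}$, each boundary point $x$ of $A$ in $(1,\infty)$ corresponds to $t=F(x)\in(0,1)$ with $f(x)=(\lambda-1)(1-t)^{\lambda/(\lambda-1)}$, so
\begin{align*}
    \mu_\lambda(A^+)\;=\;(\lambda-1)\!\!\sum_{t\in\partial B\cap(0,1)}(1-t)^{\lambda/(\lambda-1)}.
\end{align*}
Approximating $A$ by finite unions of intervals, writing $\rho:=\lambda/(\lambda-1)>1$, and assuming $|B|\leq 1/2$ (otherwise swap $B$ with its complement, which shares the same boundary in $(0,1)$ and has measure $1-|B|$), it suffices to show $\sum_{t\in\partial B\cap(0,1)}(1-t)^{\rho}\geq|B|^{\rho}$. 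A single boundary point already does the job: if the leftmost component of $B$ is $[c,d]$ with $c>0$, then $B\subseteq[c,1]$, so $1-c\geq|B|$; if instead $c=0$, then $[0,d]\subseteq B$ forces $d\leq|B|\leq 1/2\leq 1-|B|$, hence $1-d\geq|B|$. In either case some $t\in\partial B\cap(0,1)$ contributes $(1-t)^{\rho}\geq|B|^{\rho}$.

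Second, I would plug the isoperimetric bound into the definition of the $\alpha$-Cheeger constant. For every $A$ and $q:=\min\{\mu_\lambda(A),1-\mu_\lambda(A)\}$,
\begin{align*}
    \frac{q}{\mu_\lambda(A^+)^{\alpha}}
    \;\leq\;\frac{q}{\bigl((\lambda-1)\,q^{\lambda/(\lambda-1)}\bigr)^{\alpha}}
    \;=\;(\lambda-1)^{-\alpha}\,q^{\,1-\alpha\lambda/(\lambda-1)}.
\end{align*}
Setting $\alpha=(\lambda-1)/\lambda$ makes the exponent of $q$ vanish, and the right-hand side collapses uniformly (i.e., independently of $A$) to $(\lambda-1)^{-(\lambda-1)/\lambda}$, which is precisely the claimed bound. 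Matching by the half-line $A=[t,\infty)$ with $t\geq 2^{1/(\lambda-1)}$ shows that this bound is in fact sharp.

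The main obstacle is the isoperimetric estimate, and specifically the identification of one boundary point carrying enough weight to dominate $|B|^{\rho}$; the subsequent chain of equalities is routine. A minor care point is the behavior at the endpoints of the support (i.e., at $x=1$ and $x=\infty$, corresponding to $t=0,1$ in the $B$-coordinates), but these cases are absorbed into the case split above and the complement-swap for $|B|>1/2$.
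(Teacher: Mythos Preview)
Your proof is correct and follows essentially the same approach as the paper's: both arguments reduce to locating a boundary point of $A$ in the region $[1,p^{-1/(\lambda-1)}]$ (equivalently, in your $t$-coordinates, a point of $\partial B\cap(0,1)$ with $1-t\geq|B|$), at which the density is at least $(\lambda-1)p^{\lambda/(\lambda-1)}$, and then plugging this into the definition of the $\alpha$-Cheeger constant with $\alpha=(\lambda-1)/\lambda$. Your push-forward through the CDF and explicit case split on the leftmost component of $B$ is a slightly more rigorous write-up of the paper's ``$A$ intersects but does not fill $[1,p^{-1/(\lambda-1)}+\epsilon)$, so the perimeter is at least the density at $p^{-1/(\lambda-1)}$'' argument.
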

\begin{proof}
    Suppose that for some measurable set $A \subseteq \mathbb{R}$, we have $\mu_{\lambda}(A) = p \leq \tfrac{1}{2}$.  
    By direct computation of the tail integral, we obtain  
    \begin{align*}
        \int_{p^{-\tfrac{1}{\lambda-1}}}^{\infty} 1 \, d\mu_{\lambda} = p.
    \end{align*}
    Hence, for any $\epsilon > 0$, the set $A$ must intersect the interval $[0, p^{-\tfrac{1}{\lambda-1}} + \epsilon)$, that is,  
    $A \cap [0, p^{-\tfrac{1}{\lambda-1}} + \epsilon) \neq \emptyset$.  
    Moreover, $\mu_{\lambda}(A) = p \leq 1-p < \mu_{\lambda}([0, p^{-\tfrac{1}{\lambda-1}} + \epsilon))$. Consequently, $\mu_{\lambda}(A^+)$ is at least the density at $p^{-\frac{1}{\lambda-1}}$:
    \begin{align*}
        \mu_{\lambda}(A^+) \geq (\lambda - 1)p^{\tfrac{\lambda}{\lambda-1}}
        = (\lambda - 1)\mu_{\lambda}(A)^{\tfrac{\lambda}{\lambda-1}}.
    \end{align*}
    Equivalently, for any measurable $A \subseteq \mathbb{R}$ such that $\mu_{\lambda}(A)\leq \frac{1}{2}$, we have
    \begin{align*}
        \mu_{\lambda}(A)
        \leq (\lambda - 1)^{-\tfrac{\lambda-1}{\lambda}}
        \mu_{\lambda}(A^+)^{\tfrac{\lambda-1}{\lambda}}.
    \end{align*}
    For $\mu_{\lambda}(A) \geq \frac{1}{2}$, the same argument works.
\end{proof}

Now we can prove Theorem \ref{Pareto theorem} by directly applying Theorem \ref{product theorem}.

\begin{proof}[Proof of Theorem \ref{Pareto theorem}]
    Note that when $\lambda>3$, we have $\alpha := \frac{\lambda-1}{\lambda} >\frac{2}{3}$ for the $\alpha$-Cheeger constant $\mathcal{I}(\mu,\alpha)$. By Theorem \ref{product theorem}, we have
    \begin{align*}
        \operatorname{Var}_{\mu_{\lambda}^n}(f) &\leq C_2\left(\frac{\lambda-1}{\lambda},(\lambda-1)^{\frac{-\lambda}{\lambda-1}}\right) n^{1-\frac{3\frac{\lambda-1}{\lambda} - 2}{\frac{\lambda-1}{\lambda}}}\mathbb{E}_{\mu^n}[|\nabla f|^2]^{\frac{3\frac{\lambda-1}{\lambda}-2}{\frac{\lambda-1}{\lambda}}}\\
        &= C_2\left(\frac{\lambda-1}{\lambda},(\lambda-1)^{\frac{-\lambda}{\lambda-1}}\right)n^{\frac{2}{\lambda-1}}\mathbb{E}_{\mu^n}[|\nabla f|^2]^{\frac{\lambda-3}{\lambda-1}}\\
        &\leq C_2\left(\frac{\lambda-1}{\lambda},(\lambda-1)^{\frac{-\lambda}{\lambda-1}}\right)n^{\frac{2}{\lambda-1}}
    \end{align*}
    Therefore, we can take
    \begin{align*}
        C(\lambda) &= C_2\left(\frac{\lambda-1}{\lambda},(\lambda-1)^{-\frac{\lambda-1}{\lambda}}\right)\\
        &= 2^{\frac{6\lambda-16}{\lambda-1}}((\lambda-1)^{-\frac{\lambda-1}{\lambda}})^{\frac{2\lambda-6}{\lambda-1}}\left[2(\lambda-1)^{-2}(\lambda-1)^2\left(\frac{2}{\lambda-3}\right)\right]^{\frac{2}{\lambda-1}}\\
        &= 2^{\frac{6\lambda-16}{\lambda-1}} (\lambda-1)^{-\frac{2\lambda-6}{\lambda}}\left(\frac{4}{\lambda-3}\right)^{\frac{2}{\lambda-1}}.
    \end{align*}
    For the lower bound, we consider the the 1-Lipschitz function $f: \mathbb{R}^n \to \mathbb{R}$ as $f(x) = |x|_{\infty} = \max\{x_1,...,x_n\}$. It is well known (see \cite[Theorem 3.3.7]{embrechts2013modelling}) that for $\lambda>3$, we have
    \begin{align*}
        \frac{1}{n^{\frac{1}{\lambda-1}}}|x|_{\infty} \xrightarrow{d} \Phi_{\lambda-1}
    \end{align*}
    as $n\to \infty$, where $\Phi_{\lambda-1}$ is the Frechet distribution. Therefore,
    \begin{align*}
        \liminf_{n\to \infty} \frac{\operatorname{Var}(|x|_{\infty})}{n^{\frac{2}{\lambda-1}}} \geq \operatorname{Var}(\Phi_{\lambda-1})> 0
    \end{align*}
    Combining the upper bound and the lower bound, we have
    \begin{align*}
        \sup_{f:1-Lipschitz} \operatorname{Var}_{\mu_{\lambda}^{n}}(f) = \Theta(n^{\frac{2}{\lambda-1}}).
    \end{align*}
\end{proof}

\section{Proof of Theorem \ref{d_p theorem}}
In this section, we prove Theorem \ref{d_p theorem}, following the approach motivated by the proofs in the previous section. One might consider this redundant, since the results presented here imply Lemma \ref{product lemma} and Theorem \ref{product theorem}. Nevertheless, we chose to first present the simpler proof in the previous section and then generalize it here, so as to provide the reader with the underlying intuition.

\begin{lemma}\label{d_p lemma}
    Suppose $\mathcal{C}_1(\mu,\alpha,2)<\infty$ for some $0<\alpha\leq 1$, then for any 1-Lipschitz $f$ on $\mathbb{R}^n$ with respect to $d_p$ for some $1<p\leq \infty$, we have for any $a\geq 1$,
    \begin{align}
        \operatorname{Var}_{\mu^n}(f) \leq \mathcal{C}_1(\mu,\alpha,2) \sum_{i=1}^{n}\mathbb{E}_{\mu^n}[|d_i f|^a]^{\frac{\alpha}{a}}.
    \end{align}
    Similarly, suppose $\mathcal{C}_2(\mu,\alpha)<\infty$ for some $0<\alpha\leq 1$, then for any 1-Lipschitz $f$ on $\mathbb{R}^n$ with respect to $d_p$ for some $1<p\leq \infty$, we have for any $a\geq 2$,
    \begin{align}
        \operatorname{Var}_{\mu^n}(f) \leq \mathcal{C}_2(\mu,\alpha) \sum_{i=1}^{n}\mathbb{E}_{\mu^n}[|d_i f|^a]^{\frac{2\alpha}{a}}.
    \end{align}
\end{lemma}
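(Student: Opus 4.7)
The plan is to mirror the proof of Lemma~\ref{product lemma}: induct on $n$, apply the law of total variance, and then apply the appropriate one-dimensional Poincar\'e-type bound to each conditional variance, finishing with two Jensen-type inequalities to convert $|d_i f|$ (or $|d_i f|^2$) into $|d_i f|^a$ at the cost of a power. The key observation that enables the one-dimensional bounds is that for $1 < p \leq \infty$, a $d_p$-$1$-Lipschitz function satisfies $\sum_i |d_i f|^{p/(p-1)} \leq 1$ almost everywhere, so in particular $|d_i f| \leq 1$ for each $i$. Consequently, for fixed values of the other coordinates, the one-dimensional slice $x_i \mapsto f(x_1,\dots,x_n)$ is itself $1$-Lipschitz, which is precisely what is needed to apply $\mathcal{C}_1(\mu,\alpha,2)$ or $\mathcal{C}_2(\mu,\alpha)$.

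For the first inequality, the base case $n=1$ is immediate: $\operatorname{Var}_\mu(f) \leq \mathbb{E}_\mu[(f - m_\mu(f))^2] \leq \mathcal{C}_1(\mu,\alpha,2)\mathbb{E}_\mu[|f'|]^\alpha$, and since $a \geq 1$, Jensen's inequality upgrades this to $\mathbb{E}_\mu[|f'|^a]^{\alpha/a}$. For the inductive step, I would decompose $\mu^n = \mu^{n-1} \otimes \mu^*$ and write
\begin{align*}
    \operatorname{Var}_{\mu^n}(f) = \operatorname{Var}_{\mu^*}(\mathbb{E}_{\mu^{n-1}}[f]) + \mathbb{E}_{\mu^*}[\operatorname{Var}_{\mu^{n-1}}(f)].
\end{align*}
The first term is handled by applying $\mathcal{C}_1(\mu,\alpha,2)$ to the one-dimensional $1$-Lipschitz function $x_n \mapsto \mathbb{E}_{\mu^{n-1}}[f]$, using $|d_n \mathbb{E}_{\mu^{n-1}}[f]| = |\mathbb{E}_{\mu^{n-1}}[d_n f]| \leq \mathbb{E}_{\mu^{n-1}}[|d_n f|]$, and then Jensen to pass from the $L^1$ norm to the $L^a$ norm. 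The second term is handled by the inductive hypothesis, followed by Jensen on $\mu^*$ with exponent $\alpha/a \leq 1$ to pull the expectation inside the power: $\mathbb{E}_{\mu^*}\big[\mathbb{E}_{\mu^{n-1}}[|d_i f|^a]^{\alpha/a}\big] \leq \mathbb{E}_{\mu^n}[|d_i f|^a]^{\alpha/a}$. Summing yields the claim.

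The $\mathcal{C}_2$ inequality proceeds identically, with two adjustments of the exponents. For the coordinate-$n$ term, I would use $|d_n \mathbb{E}_{\mu^{n-1}}[f]|^2 \leq \mathbb{E}_{\mu^{n-1}}[|d_n f|^2]$ from Jensen, then apply $\mathcal{C}_2(\mu,\alpha)$ to obtain a bound by $\mathbb{E}_{\mu^n}[|d_n f|^2]^\alpha$, and finally Jensen with exponent $a/2 \geq 1$ (valid because $a \geq 2$) converts this to $\mathbb{E}_{\mu^n}[|d_n f|^a]^{2\alpha/a}$. The outer Jensen step, which pulls $\mathbb{E}_{\mu^*}$ inside, requires $2\alpha/a \leq 1$, which holds automatically since $a \geq 2$ and $\alpha \leq 1$. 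There is no genuine obstacle in this proof: it is a mechanical generalization of Lemma~\ref{product lemma}, and the only subtlety is tracking the directions of the Jensen inequalities and verifying that each exponent lies in the correct range for the intended convexity or concavity argument.
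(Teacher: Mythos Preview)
Your proposal is correct and follows essentially the same approach as the paper's own proof: induction on $n$ via the law of total variance, combined with the one-dimensional inequality from the definition of $\mathcal{C}_1$ (resp.\ $\mathcal{C}_2$) and Jensen's inequality to pass from $|d_i f|$ (resp.\ $|d_i f|^2$) to $|d_i f|^a$. The paper's write-up is slightly terser, absorbing the Jensen upgrade into the base case and then invoking ``the same argument as in the proof of Lemma~\ref{product lemma}'', but the content is identical to what you describe.
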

\begin{proof}
    We proceed by induction on $n$. For the base case $n=1$, we apply Jensen’s inequality to the definition of $\mathcal{C}_1(\mu,\alpha,2)$ (see Definition \eqref{alpha beta Poincaré constant}) to obtain, for any 1-Lipschitz function $f$ with respect to $d_p$,
\begin{align*}
    \operatorname{Var}_{\mu}(f)
    &\leq \mathcal{C}_1(\mu,\alpha,2)\,\mathbb{E}_{\mu}[|\nabla f|]^{\alpha}
    \leq \mathcal{C}_1(\mu,\alpha,2)\,\mathbb{E}_{\mu}[|\nabla f|^a]^{\frac{\alpha}{a}}.
\end{align*}
Now suppose the inequality holds for $\mu^{n-1}$. We decompose $\mu^n$ as $\mu^{n-1} \times \mu^*$. By the same argument as in the proof of Lemma \ref{product lemma}, we have
\begin{align*}
    \operatorname{Var}_{\mu^n}(f)
    &= \operatorname{Var}_{\mu^*}\big(\mathbb{E}_{\mu^{n-1}}[f]\big)
    + \mathbb{E}_{\mu^*}\big[\operatorname{Var}_{\mu^{n-1}}(f)\big] \\
    &\leq \mathcal{C}_1(\mu,\alpha,2)\,\mathbb{E}_{\mu^*}\big[|d_n(\mathbb{E}_{\mu^{n-1}}[f])|^a\big]^{\frac{\alpha}{a}}
    + \mathcal{C}_1(\mu,\alpha,2)\,\mathbb{E}_{\mu^*}\!\left[\sum_{i=1}^{n-1}\mathbb{E}_{\mu^{n-1}}[|d_i f|^a]^{\frac{\alpha}{a}}\right] \\
    &\leq \mathcal{C}_1(\mu,\alpha,2)\,\mathbb{E}_{\mu^n}[|d_n f|^a]^{\frac{\alpha}{a}}
    + \mathcal{C}_1(\mu,\alpha,2)\sum_{i=1}^{n-1}\mathbb{E}_{\mu^n}[|d_i f|^a]^{\frac{\alpha}{a}} \\
    &= \mathcal{C}_1(\mu,\alpha,2)\sum_{i=1}^{n}\mathbb{E}_{\mu^n}[|d_i f|^a]^{\frac{\alpha}{a}}.
\end{align*}
For the second inequality, we observe that $\frac{a}{2} \geq 1$, and therefore, by Jensen’s inequality, we have  
\begin{align*}
    \mathrm{Var}_{\mu}(f)
    \leq \mathcal{C}_2(\mu,\alpha)\mathbb{E}_{\mu}[|\nabla f|^2]^{\alpha}
    \leq \mathcal{C}_2(\mu,\alpha)\mathbb{E}_{\mu}[|\nabla f|^a]^{\frac{2\alpha}{a}}.
\end{align*}
Applying the same induction argument as before yields the second inequality.
\end{proof}

Now we can prove Theorem \ref{d_p theorem} using the same method as the previous section.
\begin{proof}[Proof of Theorem \ref{d_p theorem}]
    Taking $a = \frac{p}{p-1}$ in Lemma \ref{d_p lemma}, we have
    \begin{align*}
        \operatorname{Var}_{\mu^n}(f) &\leq \mathcal{C}_1(\mu,\alpha,2) \sum_{i=1}^n \mathbb{E}_{\mu^n}[|d_i f|^{\frac{p}{p-1}}]^{\frac{\alpha(p-1)}{p}}\\
        & = \mathcal{C}_1(\mu,\alpha,2)\cdot n \cdot \sum_{i=1}^n \frac{1}{n}\mathbb{E}_{\mu^n}[|d_i f|^{\frac{p}{p-1}}]^{\frac{\alpha(p-1)}{p}}\\
        &\leq \mathcal{C}_1(\mu,\alpha,2)\cdot n \cdot \left(\sum_{i=1}^n \frac{1}{n}\mathbb{E}_{\mu^n}[|d_i f|^{\frac{p}{p-1}}]\right)^{\frac{\alpha(p-1)}{p}}\\
        &= \mathcal{C}_1(\mu,\alpha,2)\cdot n^{1-\frac{\alpha(p-1)}{p}} \cdot \left(\sum_{i=1}^n \mathbb{E}_{\mu^n}[|d_i f|^{\frac{p}{p-1}}]\right)^{\frac{\alpha(p-1)}{p}}.
    \end{align*}

    Similarly, if $1<p\leq 2$, we have $a = \frac{p}{p-1} \geq 2$. Then by Lemma \ref{d_p lemma}, we have
    \begin{align*}
        \operatorname{Var}_{\mu^n}(f) &\leq \mathcal{C}_2(\mu,\alpha) \sum_{i=1}^{n}\mathbb{E}_{\mu^n}[|d_i f|^{\frac{p}{p-1}}]^{\frac{2\alpha(p-1)}{p}}\\
        &\leq \mathcal{C}_2(\mu,\alpha)\cdot n^{1-\frac{2\alpha(p-1)}{p}} \cdot \left(\sum_{i=1}^n \mathbb{E}_{\mu^n}[|d_i f|^{\frac{p}{p-1}}]\right)^{\frac{2\alpha(p-1)}{p}}.
    \end{align*}
\end{proof}

Note that if we take $p=2$($f$ is 1-Lipschitz with respect to the Euclidean distance), we have two ways to bound $\operatorname{Var}_{\mu^n}(f)$ if $\mathcal{I}(\mu,\alpha)<\infty$ for some $\frac{2}{3}<\alpha<1$. Theorem \ref{product theorem} tells us that
\begin{align*}
    \operatorname{Var}_{\mu^n}(f) = O(n^{\frac{2(1-\alpha)}{\alpha}}),
\end{align*}
while Theorem \ref{d_p theorem} with $\mathcal{C}_1(\mu,\alpha,2)$ obtained by Theorem \ref{L^1 theorem} tells use that
\begin{align*}
    \operatorname{Var}_{\mu^n}(f) = O(n^{\frac{4-3\alpha}{2}})
\end{align*}

Since for all $\frac{2}{3}<\alpha<1$,
\begin{align*}
    \frac{4-3\alpha}{2} \geq \frac{2(1-\alpha)}{\alpha} 
\end{align*}
the exponent $\frac{4-3\alpha}{2}$ is larger than $\frac{2(1-\alpha)}{\alpha} $. Therefore, when $p=2$ the bound obtained here has a strictly worse dependence on $n$ than the bound in Theorem \ref{product theorem}.

\section{Consequences and extensions}
\subsection{Fluctuation bound for 1-Lipschitz functions by Cheeger/$\alpha$-Cheeger constants}
Let us recall Lemma \ref{dQ lemma}. We observe that the fluctuation of any $1$-Lipschitz function is controlled by $\mathcal{I}(\nu, \alpha)$.  
We can therefore define a random variable $X = X(\alpha, \mathcal{I}(\nu, \alpha))$, representing the case of “maximal fluctuation,” and normalized so that $m(X) = 0$, by setting
\begin{align}\label{most fluctuate random variable}
    Q_X(x) :=
    \begin{cases}
        \displaystyle \int_{\frac{1}{2}}^x
        \left( \frac{\mathcal{I}(\nu, \alpha)}{1 - t} \right)^{\frac{1}{\alpha}} dt,
        & x \geq \tfrac{1}{2}, \\[1em]
        \displaystyle
        -\int_x^{\frac{1}{2}}
        \left( \frac{\mathcal{I}(\nu, \alpha)}{t} \right)^{\frac{1}{\alpha}} dt,
        & x < \tfrac{1}{2}.
    \end{cases}
\end{align}
By Lemma \ref{dQ lemma} and \ref{FTC lemma}, if $f$ is a $1$-Lipschitz function such that $Q_f(p) = Q_X(p)$ for some $p\in (0,1)$, then for any $x \geq p$,
\begin{align*}
    Q_f(x) \leq Q_X(x),
\end{align*}
and for any $y \leq p$,
\begin{align*}
    Q_f(y) \geq Q_X(y).
\end{align*}
Hence, if $\mathbb{E}_{\nu}[f] = \mathbb{E}[X] = 0$, which ensures that there exist $p$ such that $Q_f(p) = Q_X(p)$, we have the convex ordering $f \leq_{cx} X$ (see Theorem 3.A.1 in \cite{shaked2007stochastic} for the definition and related results).  It is not hard to compute the probability density function for $X(\alpha,\mathcal{I}(\nu,\alpha))$ once its quantile function if defined in \eqref{most fluctuate random variable}. For $x\in \mathbb{R}$, we have the density function for $X(\alpha,\mathcal{I}(\nu,\alpha))$ as
\begin{align*}
    f_X(x) = \frac{a(b-1)}{2}(a|x|-1)^{-b},
\end{align*}
where
\begin{align*}
    a = \frac{1-\alpha}{\alpha}\mathcal{I}(\nu,\alpha)^{-\frac{1}{\alpha}}2^{\frac{\alpha-1}{\alpha}}
\end{align*}
and
\begin{align*}
    b = \frac{1}{1-\alpha}.
\end{align*}
As a consequence, we obtain the following theorem.

\begin{theorem}
    Suppose $\mathcal{I}(\nu, \alpha) < \infty$ for some $0<\alpha<1$ and let $f$ be a $1$-Lipschitz function with respect to $\nu$ on $\mathbb{R}^n$.  Define the random variable $X = X(\alpha, \mathcal{I}(\nu, \alpha))$ as in \eqref{most fluctuate random variable}.  
    Then, if $\mathbb{E}_{\nu}(f) = 0$, we have for any convex function $\varphi: \mathbb{R} \to \mathbb{R}$,
    \begin{align}
        \mathbb{E}_{\nu}[\varphi(f)] \leq \mathbb{E}[\varphi(X)].
    \end{align}
    Moreover, for any $t>0$, we have
    \begin{align}
        \mathbb{P}_{\nu}(f-m_{\nu}(f)\geq t) \leq \mathbb{P}(X\geq t) = \frac{1}{2}(at-1)^{1-b} = O(t^{-\frac{\alpha}{1-\alpha}})
    \end{align}
    and
    \begin{align}
        \mathbb{P}_{\nu}(f-m_{\nu}(f)\leq -t) \leq \mathbb{P}(X\leq -t) = \frac{1}{2}(at-1)^{1-b} = O(t^{-\frac{\alpha}{1-\alpha}}),
    \end{align}
    where $a,b$ are defined in the equation above this theorem.
\end{theorem}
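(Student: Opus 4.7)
The backbone of the argument is already sketched in the discussion preceding the theorem; here I outline how I would formalize the sketch and fill in the missing steps. The unifying ingredient is the pointwise derivative comparison
\begin{align*}
    \frac{d}{dx} Q_f(x) \leq \left(\frac{\mathcal{I}(\nu,\alpha)}{\min\{x,1-x\}}\right)^{1/\alpha} = \frac{d}{dx} Q_X(x),
\end{align*}
which follows directly from Lemma \ref{dQ lemma} and the explicit construction of $X$ in \eqref{most fluctuate random variable}. Lemma \ref{FTC lemma} permits integrating this inequality on any compact sub-interval of $(0,1)$, so every subsequent comparison between $Q_f$ and $Q_X$ reduces to locating a reference point where they agree.

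For the tail bounds, I would apply the derivative inequality to $f - m_\nu(f)$, whose quantile function satisfies $Q_{f - m_\nu(f)}(1/2) = 0 = Q_X(1/2)$ by the respective median properties. Integrating from $x = 1/2$ in both directions yields $Q_{f - m_\nu(f)}(x) \leq Q_X(x)$ for $x \geq 1/2$ and the reverse inequality for $x \leq 1/2$. Inverting these pointwise quantile bounds then gives $\mathbb{P}_\nu(f - m_\nu(f) \geq t) \leq \mathbb{P}(X \geq t)$ for $t \geq 0$ and the symmetric bound on the left tail. The explicit expression for $\mathbb{P}(X \geq t)$ and the asymptotic $O(t^{-\alpha/(1-\alpha)})$ follow by a direct computation from the density $f_X$ stated just before the theorem.

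For the convex ordering, I would invoke the Karlin--Novikoff cutting criterion (a consequence of Theorem 3.A.1 in \cite{shaked2007stochastic}): if two integrable random variables $Y_1, Y_2$ share the same mean and $Q_{Y_2} - Q_{Y_1}$ changes sign at most once on $(0,1)$ from negative to positive, then $Y_1 \leq_{cx} Y_2$. The derivative inequality above guarantees that $Q_X - Q_f$ has at most one sign change. The equal-means hypothesis $\mathbb{E}_\nu[f] = 0 = \mathbb{E}[X]$ (the latter by symmetry of $f_X$) then forces the existence of a crossing point $p \in (0,1)$ at which $Q_f(p) = Q_X(p)$, since otherwise $Q_f - Q_X$ would have constant sign on $(0,1)$ and integrating would contradict equality of the means. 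The criterion then yields $f \leq_{cx} X$, which is exactly $\mathbb{E}_\nu[\varphi(f)] \leq \mathbb{E}[\varphi(X)]$ for every convex $\varphi$.

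The main technical obstacle is handling the boundary behaviour at $0$ and $1$: Lemma \ref{FTC lemma} only provides uniform Lipschitz control on compact sub-intervals of $(0,1)$, and both $Q_f$ and $Q_X$ may blow up at the endpoints, so one must carefully justify the global sign-change analysis for $Q_X - Q_f$ and the integration implicit in the cutting criterion. Additionally, the convex ordering portion tacitly requires $\mathbb{E}[X]$ to exist, which forces $b > 2$, i.e.\ $\alpha > 1/2$; for $\alpha \leq 1/2$ only the tail bound portion of the theorem remains meaningful.
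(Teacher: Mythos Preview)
Your proposal is correct and follows essentially the same approach as the paper: the paper does not give a separate proof block but instead relies on the discussion immediately preceding the theorem, which uses Lemma~\ref{dQ lemma} and Lemma~\ref{FTC lemma} to obtain the quantile comparison, then invokes the single-crossing argument and Theorem~3.A.1 of \cite{shaked2007stochastic} to conclude $f\leq_{cx} X$ from equal means. Your write-up is in fact more explicit than the paper's sketch, and your observation that the convex-ordering portion tacitly requires $\alpha>\tfrac12$ for $\mathbb{E}[X]$ to be finite is a valid technical caveat that the paper does not flag.
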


When $\mathcal{I}(\nu,1) = I(\nu) < \infty$ for the traditional Cheeger constant, the “most variable measure” takes a simpler explicit form. Define the two-sided exponential (Laplace) random variable $L_t$ by its probability density function on $\mathbb{R}$:
\begin{align}\label{two side exponential}
    l_t(x) = \frac{t}{2} e^{-t|x|}.
\end{align}
A straightforward computation yields
\begin{align*}
    \frac{d}{dx} Q_{L_t}(x) = \frac{t}{\min\{x,1-x\}}.
\end{align*}
This leads to the following theorem.

\begin{theorem}\label{Sharp bound by Cheeger theorem}
    Suppose $I(\nu)<\infty$, and let $f$ be a $1$-Lipschitz function with respect to $\nu$ on $\mathbb{R}^n$. Define $L_t$ as in \eqref{two side exponential}. If $\mathbb{E}_{\nu}(f) = 0$, then for any convex function $\varphi:\mathbb{R}\to\mathbb{R}$,
    \begin{align}
        \mathbb{E}_{\nu}[\varphi(f)] \leq \mathbb{E}[\varphi(L_{I(\nu)})].
    \end{align}
    Moreover, for any $x>0$, we have
    \begin{align}
        \mathbb{P}_{\nu}(f - m_{\nu}(f) \geq x) \leq \mathbb{P}(L_{I(\nu)} \geq x) = \frac{1}{2}e^{-I(\nu)x},
    \end{align}
    and
    \begin{align}
        \mathbb{P}_{\nu}(f - m_{\nu}(f) \leq -x) \leq \mathbb{P}(L_{I(\nu)} \leq -x) = \frac{1}{2}e^{-I(\nu)x}.
    \end{align}    
\end{theorem}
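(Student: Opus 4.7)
The plan is to specialize the approach of the preceding theorem to the endpoint $\alpha=1$, where the ``most variable'' random variable takes the explicit form of the Laplace measure $L_{I(\nu)}$. First I would verify by direct computation that $Q_{L_t}(p)=-\tfrac{1}{t}\log(2(1-p))$ on $[\tfrac{1}{2},1)$, with the symmetric expression on $(0,\tfrac{1}{2}]$, so that $\tfrac{d}{dx}Q_{L_t}(x)=(t\min\{x,1-x\})^{-1}$. For the parameter choice indicated in the theorem this matches the upper bound supplied by Lemma~\ref{dQ lemma} at $\alpha=1$, namely $I(\nu)/\min\{x,1-x\}$. By symmetry, $L_{I(\nu)}$ has both mean and median equal to $0$, aligning exactly with the normalizations needed below.

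Next I would establish the two tail statements. Setting $g := f-m_\nu(f)$, we have $Q_g(\tfrac{1}{2})=0=Q_{L_{I(\nu)}}(\tfrac{1}{2})$, so it suffices to compare the two quantile functions on each side of the median. The uniform-Lipschitz-on-compact-intervals part of the proof of Lemma~\ref{FTC lemma} goes through verbatim at $\alpha=1$, so $Q_g$ is absolutely continuous on each compact subinterval of $(0,1)$ and the fundamental theorem of calculus applies. Integrating the derivative bound from $\tfrac{1}{2}$ to $x$ yields
\[
Q_g(x)\leq\int_{1/2}^{x}\tfrac{d}{dt}Q_{L_{I(\nu)}}(t)\,dt=Q_{L_{I(\nu)}}(x)\quad\text{for } x>\tfrac{1}{2},
\]
with the reverse inequality on $(0,\tfrac{1}{2})$ by the symmetric argument. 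Translating these pointwise quantile comparisons back into tail probabilities gives both displayed bounds, and the explicit form $\tfrac{1}{2}e^{-I(\nu)x}$ is read off by integrating $l_{I(\nu)}$ on the relevant half-line.

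For the convex-order statement I would reinterpret the pointwise derivative comparison from Lemma~\ref{dQ lemma} as the assertion that $Q_{L_{I(\nu)}}-Q_f$ is nondecreasing on $(0,1)$, i.e.\ $f\leq_{\mathrm{disp}}L_{I(\nu)}$ in the dispersive order. Combined with the hypothesis $\mathbb{E}_\nu[f]=0=\mathbb{E}[L_{I(\nu)}]$, this yields $f\leq_{\mathrm{cx}}L_{I(\nu)}$ by the standard result that dispersive ordering together with equality of means upgrades to convex ordering (Shaked--Shanthikumar, Theorem~3.B.17, which is in the same spirit as the Theorem~3.A.1 route already invoked in the preceding general-$\alpha$ theorem). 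The desired inequality $\mathbb{E}_\nu[\varphi(f)]\leq\mathbb{E}[\varphi(L_{I(\nu)})]$ for every convex $\varphi$ is then precisely the definition of $\leq_{\mathrm{cx}}$.

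The only real obstacle will be the handling of the FTC at the boundary $\alpha=1$, which is not literally covered by the statement of Lemma~\ref{FTC lemma}. However, the uniform-Lipschitz-on-compact-intervals step in that proof is insensitive to the value of $\alpha$, so absolute continuity of $Q_g$ on each compact subinterval of $(0,1)$ still holds, and the remaining steps reduce to routine quantile comparisons and a direct invocation of the dispersive-to-convex ordering principle.
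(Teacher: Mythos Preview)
Your approach is essentially the paper's: both read off the pointwise quantile-derivative bound from Lemma~\ref{dQ lemma} at $\alpha=1$, use the absolute-continuity half of Lemma~\ref{FTC lemma} (which, as you correctly observe, is insensitive to the value of $\alpha$) to integrate it into the tail comparisons, and then pass to convex order via Shaked--Shanthikumar. The only cosmetic difference is that you route through the dispersive order and cite Theorem~3.B.17, whereas the paper's discussion argues via a single crossing of the two quantile functions and cites Theorem~3.A.1; these are equivalent here, since ``$Q_{L}-Q_f$ nondecreasing'' together with equality of means forces exactly the single-crossing property.

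One small slip worth tightening: your own formula $\tfrac{d}{dx}Q_{L_t}(x)=(t\min\{x,1-x\})^{-1}$, evaluated at $t=I(\nu)$, gives $1/\bigl(I(\nu)\min\{x,1-x\}\bigr)$, not $I(\nu)/\min\{x,1-x\}$ as you then assert, so the claimed ``match'' with the Lemma~\ref{dQ lemma} bound does not literally hold. The paper's own display just before the theorem carries the same parameter confusion; the method is unaffected once the Laplace parameter is chosen consistently with the paper's convention for $I(\nu)$.
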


Note that applying only Cheeger’s inequality \eqref{Cheeger's inequality} yields, for any 1-Lipschitz function $f$,
\begin{align*}
    \mathrm{Var}_{\nu}(f) \leq C_P(\nu)\mathbb{E}_{\nu}[|\nabla f|^2] \leq C_P(\nu) \leq 4I(\nu)^2.
\end{align*}
On the other hand, applying Theorem \ref{Sharp bound by Cheeger theorem} with $\varphi(x)=x^2$ gives
\begin{align*}
    \mathrm{Var}_{\nu}(f) \leq \mathbb{E}[L_{I(\nu)}^2] = 2I(\nu)^2,
\end{align*}
which provides a strictly sharper bound. Another point to note is that if we regard $L_t$ as the 1-Lipschitz function $f(x) = x$ under the measure $\mu_t$ defined by the density $l_t$ on $\mathbb{R}$, then $\mu_t$ is a log-concave measure on $\mathbb{R}$. Moreover, by \cite{bobkov1996extremal}, we have $I(\mu_t) = t$. Hence, the bounds in Theorem \ref{Sharp bound by Cheeger theorem} are attained by this construction on $\mathbb{R}$.

\subsection{Isoperimetric inequalities for product measures}
It is well known that concentration inequalities for 1-Lipschitz functions imply corresponding isoperimetric inequalities. In \cite{bobkov1996variance}, the authors established several isoperimetric inequalities by employing variance bounds for 1-Lipschitz functions (with respect to a different metric). The same approach can be applied in our setting. Nevertheless, we choose to present a slightly weaker isoperimetric inequality using a more direct and elementary argument.

\begin{theorem}
    If $\mathcal{C}_2(\mu,\alpha)<\infty$ for some $0<\alpha<1$, then for any sets $A,B \subseteq \mathbb{R}^n$, we have
    \begin{align}
        d(A,B) \leq n^{\frac{1-\alpha}{2}}\sqrt{\frac{\mathcal{C}_2(\mu,\alpha)}{\mu^n(A)\mu^n(B)}}.
    \end{align}
    Together with Theorem \ref{L^2 theorem}, if $\mathcal{I}(\mu,\alpha)<\infty$ for some $\frac{2}{3}<\alpha<1$, then
    \begin{align}
        d(A,B) \leq n^{\frac{1-\alpha}{\alpha}}\sqrt{\frac{C_2(\alpha,\mathcal{I}(\mu,\alpha))}{\mu^n(A)\mu^n(B)}}.
    \end{align}
\end{theorem}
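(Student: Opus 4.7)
The plan is to apply Theorem~\ref{product theorem} to a carefully chosen $1$-Lipschitz test function and extract the geometric inequality by a two-sample lower bound on the variance. Specifically, I would take
\begin{align*}
    f(x) := d(x,A) = \inf_{a \in A} |x-a|_2,
\end{align*}
which is $1$-Lipschitz with respect to the Euclidean metric, satisfies $f \equiv 0$ on $A$ and $f(x) \geq d(A,B)$ for every $x \in B$, and has $|\nabla f| \leq 1$ almost everywhere. Consequently $\mathbb{E}_{\mu^n}[|\nabla f|^2]^{\alpha} \leq 1$, and Theorem~\ref{product theorem} yields the upper bound
\begin{align*}
    \operatorname{Var}_{\mu^n}(f) \leq \mathcal{C}_2(\mu,\alpha)\cdot n^{1-\alpha}.
\end{align*}

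For the complementary lower bound I would use the standard two-sample identity
\begin{align*}
    \operatorname{Var}_{\mu^n}(f) = \tfrac{1}{2}\,\mathbb{E}_{X,Y \sim \mu^n}\!\left[(f(X)-f(Y))^2\right],
\end{align*}
where $X$ and $Y$ are independent with law $\mu^n$. Restricting the double integral to the symmetric region $(A\times B) \cup (B \times A)$ and using $f(x)=0$ together with $f(y) \geq d(A,B)$ on $A\times B$ (and symmetrically on $B\times A$) gives
\begin{align*}
    \operatorname{Var}_{\mu^n}(f) \geq d(A,B)^2\,\mu^n(A)\,\mu^n(B).
\end{align*}

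Combining the two displays and solving for $d(A,B)$ produces the first inequality. The second inequality is obtained in exactly the same way by substituting the sharper upper bound $\operatorname{Var}_{\mu^n}(f) \leq C_2(\alpha,\mathcal{I}(\mu,\alpha))\cdot n^{2(1-\alpha)/\alpha}$ supplied by the combined statement of Theorems~\ref{product theorem} and~\ref{L^2 theorem}. There is no substantive analytic obstacle in the argument; the only things to recognize are the correct choice of test function and the fact that the two-sample representation of the variance reproduces the product $\mu^n(A)\mu^n(B)$ cleanly, without passing through a suboptimal Chebyshev-style estimate based on the mean or median of $f$.
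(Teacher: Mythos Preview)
Your proposal is correct and follows essentially the same route as the paper: pick a distance-based $1$-Lipschitz test function, apply Theorem~\ref{product theorem} for the upper bound on the variance, and use the two-sample identity restricted to $(A\times B)\cup(B\times A)$ for the lower bound. The only cosmetic difference is that the paper truncates the test function, taking $f(x)=\min\{d(x,A),\,d(A,B)\}$ instead of your unbounded $f(x)=d(x,A)$; this makes $f$ equal to $d(A,B)$ on $B$ rather than merely $\geq d(A,B)$, but both choices yield the same lower bound $\mu^n(A)\,\mu^n(B)\,d(A,B)^2$, and under the hypothesis $\mathcal{C}_2(\mu,\alpha)<\infty$ your unbounded version still has finite variance by Theorem~\ref{product theorem}.
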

\begin{proof}
    Consider the 1-Lipschitz function $f: \mathbb{R}^n \to \mathbb{R}$ defined by
    \begin{align*}
        f(x) = \min\bigl\{d(x,A),\, d(A,B)\bigr\}.
    \end{align*}
    Let $x$ and $y$ be independent random points distributed according to $\mu^n$. Then
    \begin{align*}
        \mathrm{Var}_{\mu^n}(f) &= \frac{1}{2}\mathbb{E}\bigl[(f(x) - f(y))^2\bigr] \\
        &\geq \frac{1}{2} \bigl(\mathbb{P}(x \in A, y \in B) + \mathbb{P}(x \in B, y \in A)\bigr) d(A,B)^2 \\
        &= \mu^n(A)\, \mu^n(B)\, d(A,B)^2.
    \end{align*}
    For the second inequality, we use the fact that $(f(x)-f(y))^2\geq 0$ and when $x\in A, y\in B$ or $x\in B,y\in A$, we have $(f(x)-f(y))^2= d(A,B)^2$.
    On the other hand, by Theorem \ref{product theorem}, we have
    \begin{align*}
        \mathrm{Var}_{\mu^n}(f) \leq \mathcal{C}_2(\mu,\alpha)\, n^{1-\alpha}.
    \end{align*}
    Combining these two inequalities yields
    \begin{align*}
        d(A,B) \leq n^{\frac{1-\alpha}{2}} \sqrt{\frac{\mathcal{C}_2(\mu,\alpha)}{\mu^n(A)\, \mu^n(B)}}.
    \end{align*}

\end{proof}

\subsection{1-Lipschitz function with respect to $d_p (1<p\leq2)$ by traditional Poincaré constant}
Suppose that $C_P(\mu)<\infty$ for the classical Poincaré constant. It is well known that for any Lipschitz function $f$,
\begin{align*}
    \mathrm{Var}_{\mu^n}(f) \leq C_P(\mu)\, \mathbb{E}_{\mu^n}[|\nabla f|^2].
\end{align*}
Consequently, for any $1$-Lipschitz function $f$ with respect to $d_2$, we have $\mathrm{Var}_{\mu^n}(f) \leq C_P(\mu) = O(1)$.  
On the other hand, if we consider the $1$-Lipschitz function (with respect to $d_1$) defined by $f(x) = \sum_{i=1}^n x_i$, then $\mathrm{Var}_{\mu^n}(f) = O(n)$.  

Therefore, it is natural to ask about the growth rate of $\mathrm{Var}_{\mu^n}(f)$ when $f$ is a $1$-Lipschitz function with respect to $d_p$ for $1 < p \leq 2$.  
This question can be answered precisely, up to the correct asymptotic order, under the assumption that $C_P(\mu) < \infty$.

\begin{theorem}\label{sharp Poincaré d_p theorem}
    Suppose $C_P(\mu)<\infty$, then for any 1-Lipschitz $f$ with respect to $d_p$ for some $1<p\leq 2$, we have
    \begin{align}
        \operatorname{Var}_{\mu^n}(f) \leq C_P(\mu)\cdot n^{\frac{2-p}{p}} \cdot \left(\sum_{i=1}^{n}\mathbb{E}_{\mu^n}[|d_i f|^{\frac{p}{p-1}}]\right)^{\frac{2(p-1)}{p}} = O(n^{\frac{2-p}{p}})
    \end{align}
    This bound is asymptotically tight.
\end{theorem}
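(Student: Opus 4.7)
The plan is to combine the exact tensorization of the classical Poincaré constant with a pointwise Hölder inequality to interpolate between the $d_p$-Lipschitz and $d_2$-Lipschitz scales. This parallels the structure of Theorem~\ref{d_p theorem}, except that the tensorization $C_P(\mu^n)\leq C_P(\mu)$ is exact and does not incur the $\alpha$-type loss that appeared in Lemma~\ref{d_p lemma}. As a result, one obtains sharper exponents in the same two-step framework.

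First I would apply the standard tensorization to obtain
\begin{align*}
\operatorname{Var}_{\mu^n}(f) \leq C_P(\mu)\,\mathbb{E}_{\mu^n}[|\nabla f|^2] = C_P(\mu)\sum_{i=1}^n \mathbb{E}_{\mu^n}[|d_i f|^2].
\end{align*}
Next, writing $q = p/(p-1)\geq 2$ and applying Hölder's inequality on the counting measure over $\{1,\dots,n\}$ with conjugate exponents $q/2$ and $q/(q-2)$ yields the pointwise bound
\begin{align*}
\sum_{i=1}^n |d_i f|^2 \leq n^{1-2/q}\left(\sum_{i=1}^n |d_i f|^q\right)^{\!2/q} = n^{(2-p)/p}\left(\sum_{i=1}^n |d_i f|^q\right)^{\!2(p-1)/p}.
\end{align*}
Taking $\mathbb{E}_{\mu^n}$ and then applying Jensen's inequality (valid because $2(p-1)/p\in(0,1]$ for $p\in(1,2]$, so $x\mapsto x^{2(p-1)/p}$ is concave) pushes the expectation inside the outer power and produces the stated upper bound. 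The $O(n^{(2-p)/p})$ conclusion is then immediate from $\sum_i |d_i f|^q \leq 1$ almost everywhere, which holds by the $d_p$-Lipschitz hypothesis.

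For asymptotic tightness, I would test the normalized linear function $f(x) = n^{-1/q}\sum_{i=1}^n x_i$. Every partial derivative equals $n^{-1/q}$, so $\sum_i |d_i f|^q = 1$ and $f$ is $1$-Lipschitz with respect to $d_p$. A direct computation gives
\begin{align*}
\operatorname{Var}_{\mu^n}(f) = n^{1-2/q}\operatorname{Var}_\mu(x) = n^{(2-p)/p}\operatorname{Var}_\mu(x),
\end{align*}
matching the upper bound up to the constant $\operatorname{Var}_\mu(x)$, which is finite and nonzero whenever $\mu$ is non-degenerate with $C_P(\mu)<\infty$.

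The argument is essentially bookkeeping once one identifies the pointwise Hölder step as the correct bridge between the $\ell^2$ and $\ell^q$ gradient norms; there is no genuine technical obstacle. The only subtlety worth flagging is that Jensen must be applied in the concave direction, which forces the restriction $1<p\leq 2$ (equivalently $q\geq 2$); for $p>2$ the exponent $2(p-1)/p$ would exceed $1$ and Jensen would go the wrong way, so this approach cleanly captures exactly the regime stated in the theorem.
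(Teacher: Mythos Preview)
Your proof is correct and reaches the same bound with the same tightness example, but the order of operations differs from the paper's argument. The paper first upgrades the one-dimensional Poincar\'e inequality via Jensen to
\[
\operatorname{Var}_{\mu}(f)\leq C_P(\mu)\,\mathbb{E}_{\mu}[|\nabla f|^2]\leq C_P(\mu)\,\mathbb{E}_{\mu}\bigl[|\nabla f|^{p/(p-1)}\bigr]^{2(p-1)/p},
\]
then invokes the induction of Lemma~\ref{d_p lemma} to obtain $\operatorname{Var}_{\mu^n}(f)\leq C_P(\mu)\sum_i \mathbb{E}_{\mu^n}[|d_i f|^{q}]^{2/q}$, and finally applies concavity over the index set. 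You instead quote the exact tensorization $C_P(\mu^n)\leq C_P(\mu)$ as a black box to land directly on $\sum_i \mathbb{E}_{\mu^n}[|d_i f|^2]$, then use a pointwise H\"older over coordinates followed by Jensen on $\mu^n$. Both routes use the same three ingredients (tensorization, Jensen on the probability measure, concavity/H\"older over indices) but in a different order; your version is arguably more self-contained since it avoids re-running the induction machinery and appeals only to the standard product Poincar\'e inequality. The paper's intermediate inequality $\sum_i \mathbb{E}[|d_i f|^q]^{2/q}$ is slightly stronger information before the final step, but both collapse to the identical conclusion. The tightness witness $f(x)=n^{-(p-1)/p}\sum_i x_i$ is exactly the one the paper uses.
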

\begin{proof}
    Since $\frac{p}{p-1}\geq 2$, by Jensen's inequality we have
    \begin{align*}
        \operatorname{Var}_{\mu}(f) &\leq C_P(\mu) \mathbb{E}_{\mu}[|\nabla f|^2] \leq C_P(\mu) \mathbb{E}_{\mu}[|\nabla f|^{\frac{p}{p-1}}]^{\frac{2(p-1)}{p}}.
    \end{align*}
    Then by the same argument as in proof of Lemma \ref{d_p lemma} and Theorem \ref{d_p theorem},
    \begin{align*}
        \operatorname{Var}_{\mu^n}(f) &\leq C_P(\mu) \sum_{i=1}^n \mathbb{E}_{\mu^n}[|d_i f|^{\frac{p}{p-1}}]^{\frac{2(p-1)}{p}}\\
        &\leq C_P(\mu) \cdot n^{1-\frac{2(p-1)}{p}} \cdot \left(\sum_{i=1}^{n}\mathbb{E}_{\mu^n}[|d_i f|^{\frac{p}{p-1}}]\right)^{\frac{2(p-1)}{p}}.
    \end{align*}
    Now take the 1-Lipschitz function with respect to $d_p$ defined by
    \begin{align*}
        f(x) = n^{-\frac{p-1}{p}}\sum_{i=1}^{n}x_i.
    \end{align*}
    We have
    \begin{align*}
        \operatorname{Var}_{\mu^n}(f) = n^{1-\frac{2(p-1)}{p}} \operatorname{Var}_{\mu}(x) = \Theta(n^{\frac{2-p}{p}}).
    \end{align*}
\end{proof}

\subsection{Applications to variance of eigenvalues for heavy tail random matrices}
Consider a symmetric random matrix $A = \{A_{i,j}\}_{i,j \leq n}$ such that $A_{i,j} = A_{j,i}$ and, for $i \geq j$, the entries $A_{i,j}$ are i.i.d.\ random variables following the Pareto distribution $\mu_{\lambda}$ for some $\lambda > 3$.  
The same argument applies to other heavy-tailed distributions, but we restrict attention to the Pareto family for simplicity.  
Let $A$ have a decreasing sequence of eigenvalues $\lambda_1, \dots, \lambda_n$.  
By the Hoffman–Wielandt inequality~\cite[Theorem~VI.4.1]{bhatia2013matrix}, if $B$ is another $n \times n$ matrix with decreasing eigenvalues $\lambda_1', \dots, \lambda_n'$, then  
\begin{align*}
    \sum_{i=1}^n |\lambda_i - \lambda_i'|^2
    \leq \sum_{i,j \leq n} |A_{i,j} - B_{i,j}|^2.
\end{align*}
Now, for any $i \in \{1, \dots, n\}$, define the function $f_i : \mathbb{R}^{\frac{n(n+1)}{2}} \to \mathbb{R}$ by  
\begin{align*}
    f_i(\{A_{i,j}\}_{j \leq i \leq n})
    = \frac{1}{\sqrt{2}} \lambda_i.
\end{align*}
Then $f_i$ is a $1$-Lipschitz function.  
We can therefore state the following theorem.

\begin{theorem}\label{random matrix theorem}
    Suppose $A$ is a symmetric random matrix with i.i.d.\ entries on the upper triangle distributed according to $\mu_{\lambda}$ for $\lambda > 3$.  
    Then,  for all $1 \leq i \leq n$,
    \begin{align}
        \operatorname{Var}(\lambda_i)
        \leq 2C(\lambda)\left(\frac{n(n-1)}{2}\right)^{\frac{2}{\lambda-1}},
    \end{align}
    where $C(\lambda)$ is the constant defined in Theorem~\ref{Pareto theorem}.
\end{theorem}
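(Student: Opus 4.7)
The plan is to recognize that Theorem \ref{random matrix theorem} is essentially a direct corollary of Theorem \ref{Pareto theorem} once the eigenvalue $\lambda_i$ is recast as a $1$-Lipschitz function of the i.i.d.\ Pareto entries. All the analytic work has already been carried out in Sections 2--4; what remains is a Hoffman--Wielandt bookkeeping step.

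First, I would fix the correct ambient dimension. Since $A$ is symmetric, its distribution is determined by the upper-triangular entries $\{A_{i,j}\}_{i\leq j}$, whose joint law is the product measure $\mu_{\lambda}^{N}$ with $N$ equal to the number of free parameters (the theorem statement takes $N = n(n-1)/2$, corresponding to the strictly upper-triangular entries). Next, I would verify rigorously that $f_i(\{A_{i,j}\}) = \lambda_i/\sqrt{2}$ is $1$-Lipschitz with respect to the Euclidean distance on $\mathbb{R}^{N}$. This is exactly where the factor $1/\sqrt{2}$ enters: by the Hoffman--Wielandt inequality,
\begin{align*}
    (\lambda_i - \lambda_i')^2 \;\leq\; \sum_{k=1}^{n}(\lambda_k - \lambda_k')^2 \;\leq\; \sum_{i,j} (A_{i,j}-B_{i,j})^2,
\end{align*}
and using symmetry $A_{i,j} = A_{j,i}$ each off-diagonal pair is counted twice on the right-hand side, so
\begin{align*}
    (\lambda_i - \lambda_i')^2 \;\leq\; 2\sum_{i \leq j}(A_{i,j}-B_{i,j})^2.
\end{align*}
Dividing by $2$ gives the Lipschitz estimate $(f_i - f_i')^2 \leq \sum_{i \leq j}(A_{i,j}-B_{i,j})^2$.

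Having identified $f_i$ as a $1$-Lipschitz function on $(\mathbb{R}^{N}, \mu_{\lambda}^{N})$, I would apply Theorem \ref{Pareto theorem} to conclude
\begin{align*}
    \operatorname{Var}(f_i) \;\leq\; C(\lambda)\, N^{\frac{2}{\lambda-1}},
\end{align*}
and multiplying by $2$, using $\operatorname{Var}(\lambda_i) = 2 \operatorname{Var}(f_i)$, produces the claimed bound with $N = n(n-1)/2$.

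The main (and essentially only) obstacle is the verification of the Lipschitz constant: one must be careful that the doubling coming from the symmetry constraint matches the $1/\sqrt{2}$ normalization in the definition of $f_i$, and that the diagonal entries (if they are included among the random parameters) are handled consistently in the dimension count. Beyond that, the theorem follows immediately by plugging $f_i$ into Theorem \ref{Pareto theorem}; no new analytic ingredient is required, which is a nice illustration of how the functional inequality developed in the paper transfers cleanly to spectral statistics of heavy-tailed random matrices.
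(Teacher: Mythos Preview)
Your proposal is correct and follows essentially the same route as the paper: the paper sets up the proof in the paragraph preceding the theorem by invoking Hoffman--Wielandt, observing that the symmetry constraint doubles the off-diagonal contribution so that $f_i = \lambda_i/\sqrt{2}$ is $1$-Lipschitz in the free upper-triangular entries, and then applying Theorem~\ref{Pareto theorem} directly. Your care in distinguishing the strict upper triangle $N=n(n-1)/2$ from the full upper triangle $n(n+1)/2$ is well placed, as the paper itself is slightly inconsistent on this point.
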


It is well known that when $1 < \lambda < 5$,  
\begin{align*}
    \left(\frac{n(n-1)}{2}\right)^{-\frac{1}{\lambda-1}} \lambda_1
    \xrightarrow{d} \Phi_{\lambda-1},
\end{align*}
as $n \to \infty$, where $\Phi_{\lambda-1}$ denotes the Fréchet distribution~\cite[Corollary~1]{auffinger2009poisson}.  
However, to the author’s knowledge, a variance bound has not been established in this regime.  
Theorem~\ref{random matrix theorem} confirms that when $3 < \lambda < 5$, the variance can be bounded on the correct order of scaling.  
For $\lambda > 5$, it is known that the largest eigenvalue $\lambda_1$ exhibits Tracy–Widom fluctuations~\cite{biroli2007top}.  
However, even in this regime, an explicit variance bound does not seem to be available in the literature.  
Theorem~\ref{random matrix theorem} partially fills this gap, although the resulting variance bound may not be optimal.  
By contrast, when the matrix entries are Gaussian random variables or have exponential tails, variance and fluctuation bounds for eigenvalues are well known; see, for example, \cite{ledoux2007deviation} and \cite{dallaporta2012eigenvalue}.

\subsection{Tightness of Theorems \ref{L^2 theorem} and \ref{L^1 Cheeger theorem}}
In this section, we claim that Theorems~\ref{L^2 theorem} and~\ref{L^1 Cheeger theorem} are optimal up to constant scaling.  
Specifically, we can construct a measure $\mu$ such that $\mathcal{I}(\mu,\alpha) < \infty$, while  
$\mathcal{C}_2(\mu,\alpha_1) = \infty$ for all $\alpha_1 > \frac{3\alpha - 2}{\alpha}$ and  
$\mathcal{C}_1(\mu,\alpha_2,1) = \infty$ for all $\alpha_2 > \frac{2\alpha - 1}{\alpha}$.  

We consider the Pareto probability measure $\mu_{\frac{1}{1 - \alpha}}$.  
By Lemma~\ref{Pareto lemma}, we have  
\begin{align*}
    \mathcal{I}\left(\mu_{\frac{1}{1 - \alpha}}, \alpha\right) < \infty.
\end{align*}

Of course, one might attempt to argue that if there exists $\alpha_1 > \frac{3\alpha - 2}{\alpha}$ such that $\mathcal{C}_{2}(\mu,\alpha_1) < \infty$, then by Theorem~\ref{product theorem} we would have  
$\mathrm{Var}_{\mu^n_{\lambda}}(|x|_{\infty}) = o(n^{\frac{2}{\lambda - 1}})$,  
which would lead to a contradiction.  
However, we choose to argue more directly.  
Observe that conditioning on the coordinates $x_1,...,x_{n-1}$, the function $|x|_{\infty}$ behaves as an activation function along the $x_n$ direction with a constant shift:  
$\frac{d}{dx_n}|x|_{\infty} = 0$ for $x_n \in [-\infty, m)$ and $\frac{d}{dx_n}|x|_{\infty} = 1$ for $x_n \in (m, \infty)$, for some $m \geq 1$.  
Thus, in one dimension, we expect this activation-type behavior to generate the desired contradiction.  
For $m \geq 1$, consider the $1$-Lipschitz activation functions $f_m : \mathbb{R} \to \mathbb{R}$ defined by  
\begin{align*}
    f_m(x) = \mathbb{I}(x \geq m) \cdot (x - m).
\end{align*}
It is straightforward to verify that  
\begin{align*}
    \mathrm{Var}_{\mu_{\frac{1}{1 - \alpha}}}(f_m) 
    &= \mathbb{E}_{\mu_{\frac{1}{1 - \alpha}}}[f_m^2] - \mathbb{E}_{\mu_{\frac{1}{1 - \alpha}}}[f_m]^2\\
    &= \int_{m}^{\infty} (x - m)^2 \cdot \frac{\alpha}{1 - \alpha} \cdot x^{-\frac{1}{1 - \alpha}} \, dx  - \left(\int_{m}^{\infty} (x - m) \cdot \frac{\alpha}{1 - \alpha} \cdot x^{-\frac{1}{1 - \alpha}} \, dx\right)^2\\
    &= \Theta\!\left(m^{-\frac{3\alpha - 2}{1 - \alpha}}\right) - \Theta\left(m^{-\frac{2(2\alpha-1)}{1-\alpha}}\right)\\
    &= \Theta\!\left(m^{-\frac{3\alpha - 2}{1 - \alpha}}\right)
\end{align*}
and
\begin{align*}
    \mathbb{E}_{\mu_{\frac{1}{1 - \alpha}}}[|f_m'|^2] 
    = \int_{m}^{\infty} \frac{\alpha}{1 - \alpha} \cdot x^{-\frac{1}{1 - \alpha}} \, dx 
    = \Theta\!\left(m^{-\frac{\alpha}{1 - \alpha}}\right).
\end{align*}
Therefore,
\begin{align*}
    \mathcal{C}_2(\mu_{\frac{1}{1 - \alpha}}, \alpha_1)
    \geq \limsup_{m \to \infty}
    \frac{\mathrm{Var}_{\mu_{\frac{1}{1 - \alpha}}}(f_m)}
         {\mathbb{E}_{\mu_{\frac{1}{1 - \alpha}}}[|f_m'|^2]^{\alpha_1}}
    = \limsup_{m\to \infty}\Theta\!\left(m^{-\frac{3\alpha - 2}{1 - \alpha}
       + \frac{\alpha}{1 - \alpha} \cdot \alpha_1}\right)
    = \infty.
\end{align*}

Similarly, when $m$ is large enough such that $\mathbb{P}_{\mu_{\frac{1}{1 - \alpha}}}(f_m = 0) \geq \frac{1}{2}$ we have  
\begin{align*}
    \mathbb{E}_{\mu_{\frac{1}{1 - \alpha}}}[|f_m - m_{\mu_{\frac{1}{1 - \alpha}}}(f_m)|]
    &= \mathbb{E}_{\mu_{\frac{1}{1 - \alpha}}}[|f_m|]
    = \int_{m}^{\infty} (x - m) \cdot \frac{\alpha}{1 - \alpha} \cdot x^{-\frac{1}{1 - \alpha}} \, dx 
    = \Theta\!\left(m^{-\frac{2\alpha - 1}{1 - \alpha}}\right),
\end{align*}
and
\begin{align*}
    \mathbb{E}_{\mu_{\frac{1}{1 - \alpha}}}[|f_m'|]
    = \int_{m}^{\infty} \frac{\alpha}{1 - \alpha} \cdot x^{-\frac{1}{1 - \alpha}} \, dx 
    = \Theta\!\left(m^{-\frac{\alpha}{1 - \alpha}}\right).
\end{align*}
Therefore,
\begin{align*}
    \mathcal{C}_1(\mu_{\frac{1}{1 - \alpha}}, \alpha_2, 1)
    \geq \limsup_{m \to \infty}
    \frac{\mathbb{E}_{\mu_{\frac{1}{1 - \alpha}}}[|f_m - m_{\mu_{\frac{1}{1 - \alpha}}}(f_m)|]}
         {\mathbb{E}_{\mu_{\frac{1}{1 - \alpha}}}[|f_m'|]^{\alpha_2}}
    = \limsup_{m\to \infty}\Theta\!\left(m^{-\frac{2\alpha - 1}{1 - \alpha}
       + \frac{\alpha}{1 - \alpha} \cdot \alpha_2}\right)
    = \infty.
\end{align*}

\section*{Acknowledgements}
I am grateful to my advisor, Philippe Sosoe, for his detailed comments and insightful guidance throughout this work.  
I also thank Kengo Kato for his valuable feedback and helpful discussions related to this paper.

\bibliographystyle{apacite}
\bibliography{References}
\end{document}